\DeclareFontFamily{OT1}{rsfs}{}
\DeclareFontShape{OT1}{rsfs}{n}{it}{<-> rsfs10}{}
\DeclareMathAlphabet{\mathscr}{OT1}{rsfs}{n}{it}
\newtheorem{theorem}{Theorem}[section]
\newtheorem{lemma}[theorem]{Lemma}
\newtheorem{corollary}[theorem]{Corollary}
\newtheorem{proposition}[theorem]{Proposition}
\theoremstyle{definition} \newtheorem{definition}[theorem]{Definition}}
\theoremstyle{remark} \newtheorem{remark}[theorem]{Remark}
\newtheorem{example}[theorem]{Example}}
\numberwithin{equation}{section}
\newenvironment{itemized}{\begin{itemize}[itemsep=0ex, wide]}{\end{itemize}}
\newcommand{\Pbo}{{\mathbf {P}}}
\newcommand{\Abb}{{\mathbb{A}}}
\newcommand{\Cbb}{{\mathbb{C}}}
\newcommand{\Pbb}{{\mathbb{P}}}
\newcommand{\Qbb}{{\mathbb{Q}}}
\newcommand{\Zbb}{{\mathbb{Z}}}
\newcommand{\Sym}{\text{\rm Sym}}
\newcommand{\cA}{{\mathscr A}}
\newcommand{\cD}{{\mathscr D}}
\newcommand{\cE}{{\mathscr E}}
\newcommand{\cF}{{\mathscr F}}
\newcommand{\cI}{{\mathscr I}}
\newcommand{\cL}{{\mathscr L}}
\newcommand{\cM}{{\mathscr M}}
\newcommand{\cN}{{\mathscr N}}
\newcommand{\cO}{{\mathscr O}}
\newcommand{\cP}{{\mathscr P}}
\newcommand{\cS}{{\mathscr S}}
\newcommand{\cX}{{\mathscr X}}
\newcommand{\ux}{\underline x}
\newcommand{\one}{1\hskip-3.5pt1}
\newcommand{\cma}{{c_{\text{Ma}}}}
\newcommand{\csm}{{c_{\text{SM}}}}
\newcommand{\cfu}{{c_{\text{F}}}}
\newcommand{\cfj}{{c_{\text{FJ}}}}
\newcommand{\cvir}{{c_{\text{vir}}}}
\newcommand{\saf}{\,}
\DeclareMathOperator{\capdot}{\mathbin{\cap\hskip-4.75pt.}\,}
\newcommand{\Llambda}{{\reflectbox{\rotatebox[origin=c]{180}{\text{$\mathbb V$}}}}}
\newcommand{\Ggamma}{{\reflectbox{\rotatebox[origin=c]{180}{\text{$\mathbb L$}}}}}
\DeclareMathOperator{\rk}{rk}
\DeclareMathOperator{\codim}{codim}
\DeclareMathOperator{\Spec}{Spec}
\DeclareMathOperator{\Proj}{Proj}
\DeclareMathOperator{\Eu}{Eu}
\DeclareMathOperator{\Sing}{Sing}
\DeclareMathOperator{\Segre}{Segre}
\DeclareMathOperator{\Gr}{Gr}
\DeclareMathOperator{\Ch}{Ch}
\DeclareMathOperator{\Ff}{F}
\newcommand{\qede}{\hfill$\lrcorner$}
\begin{document}

\title{Segre classes and invariants of singular varieties}
\author{Paolo Aluffi}
\address{
Mathematics Department, 
Florida State University,
Tallahassee FL 32306, U.S.A.
}
\email{aluffi@math.fsu.edu}

\maketitle

\begin{abstract}
{\em Segre classes\/} encode essential intersection-theoretic information concerning
vector bundles and embeddings of schemes. In this paper we survey a range of applications
of Segre classes to the definition and study of invariants of singular spaces. We will focus on
several numerical invariants, on  different notions of characteristic classes for singular varieties,
and on classes of L\^e cycles. We precede the main discussion with a review of relevant background 
notions in algebraic geometry and intersection theory.
\end{abstract}


\section*{Contents}


\section{Introduction}\label{intro}
{\em Segre classes\/} are an important ingredient in Fulton-MacPherson intersection
theory: the very definition of intersection product may be given in terms of these classes,
as we will recall below. It is therefore not surprising that important invariants of 
algebraic varieties may be expressed in terms of Segre classes. 
The goal of this paper is to survey several invariants specifically arising in singularity theory 
which may be defined or recast in terms of Segre classes. Many if not all of these invariants
first arose in complex geometry; the fact that they can be expressed in purely algebraic
terms by means of Segre classes extends their definition to arbitrary algebraically
closed fields of characteristic zero. Tools specific to the theory of Segre classes yield
new information on these invariants, or clarify the relations between them. On the whole,
the language of Segre classes offers a powerful perspective in the study of these invariants.

We will begin with a general introduction to Segre classes and their role in intersection 
theory, in~\S\ref{sec:SegreClasses}; a hurried reader can likely skim through this section 
at first and come back to it 
as needed. The survey itself will focus on the following themes:
\begin{itemize}
\item Numerical invariants (\S\ref{sec:numinvs});
\item Characteristic classes (\S\ref{sec:Charcla});
\item L\^e cycles (\S\ref{sec:Le}).
\end{itemize}
A glance at the table of contents will reveal more specifics about the topics we chose.

One central result will be an expression for the Chern-Schwartz-MacPherson class 
of a (possibly singular) subvariety of a fixed ambient nonsingular variety, in terms
of the Segre class of an associated scheme: see the discussion in~\S\ref{ss:CSMgen} 
and especially Theorem~\ref{thm:CSMemb}. For example, the topological Euler
characteristic of a scheme embedded in a nonsingular compact complex variety 
may be computed in terms of this Segre class. 
In the case of hypersurfaces, or more generally local complete intersections, this 
result implies concrete formulas for (generalized) Milnor numbers and classes.
These formulas are explicit enough that they can be effectively implemented in
computer algebra systems such as Macaulay2 for subschemes of
e.g., projective space. Characteristic classes of singular 
varieties are also treated in detail in other contributions to this `Handbook of Geometry 
and Topology of Singularities'; see especially the papers by Jean-Paul Brasselet
\cite{charclahandbook} and by Roberto Callejas-Bedregal, Michelle Morgado, and 
Jos\'e Seade \cite{milnorhandbook}.
The relation between Segre classes and David Massey's L\^e cycles discussed 
in~\S\ref{sec:Le} is the result of joint work with Massey. L\^e cycles are the subject of
Massey's contribution to this Handbook,~\cite{masseyhandbook}.  

The role of Segre classes in singularity theory is certainly more pervasive than this
survey can convey; because of limitations of space (and of our competence) we had to 
make a rather narrow selection, at the price of passing in silence many important 
topics. Among these omissions, we mention:
\begin{itemized}
\item The careful study of multiplicities and Segre numbers by R\"udiger Achilles, Mirella
Manaresi, and collaborators, see e.g.,~\cite{MR3912658};
\item Work on the Buchbaum-Rim multiplicity, particularly by Steven Kleiman and Anders Thorup,
\cite{MR1282823,MR1393259};
\item Work by Terry Gaffney and Robert Gassler on Segre numbers and cycles, \cite{MR1703611},
briefly mentioned in~\S\ref{sec:Le};
\item Seminal work by Ragni Piene on Segre classes and polar varieties, \cite{Pienepolarclasses},
also only briefly mentioned;
\item Alternative uses of Segre classes in defining characteristic classes of singular varieties,
as developed by Kent Johnson \cite{MR463161} and Shoji Yokura \cite{MR857438};
\item Toru Ohmoto's work on Segre-SM classes and higher Thom polynomials~\cite{MR3585782};
\item Equivariant aspects and positivity questions, which have recently come to the fore
in the study of characteristic classes for Schubert varieties, see e.g., \cite{AMSS, posSMc}.\end{itemized}
Each of these topics would deserve a separate review article, and this list is in itself incomplete.

\smallskip

{\em Acknowledgments.} The author thanks the editors of the {\em Handbook of Geometry 
and Topology of Singularities\/} for the invitation to contribute this survey, and the referees
for constructive comments. 

The author acknowledges support from a 
Simons Foundation Collaboration Grant, award number 625561, 
and also thanks Caltech for the hospitality during the writing of this paper.


\section{Segre classes}\label{sec:SegreClasses}

In this section we review the general definition of {\em Segre class\/} used in the rest
of the article, and place it in the context of Fulton-MacPherson intersection theory.
The reader can safely skim through this section, coming back to it as it is 
referenced later in the survey. 
We also introduce a notion that will be frequently used in the rest, that is, the `singularity 
subscheme' of a hypersurface; \S\ref{ss:hyparr} is an extended example revolving around
the Segre class of this subscheme for hyperplane arrangements.

We work over an algebraically closed field $k$; in later considerations, $k$ will be assumed
to have characteristic~$0$. Schemes are assumed to be 
separated of finite type over $k$.
A {\em variety\/} is a reduced irreducible scheme; a 
{\em subvariety\/}\index{subvariety} 
of a scheme is a closed subscheme that is a variety.
By `point' we will mean {\em closed\/} point. 
An effective {\em Cartier divisor\/} (or, slightly abusing language, a 
{\em hypersurface\/})\index{hypersurface}
is a codimension-$1$ subscheme that is locally defined by a nonzero divisor. Cartier
divisors are zero-schemes of sections of line bundles. 
A 
{\em cycle\/}\index{algebraic cycle}
in a scheme is a formal integer linear combination of 
subvarieties. 
Two cycles are 
{\em rationally\index{rational equivalence} 
equivalent\/} if (loosely speaking) they are connected
by families parametrized by $\Pbb^1$. The 
{\em Chow group\/}\index{Chow group} 
of dimension-$k$ cycles 
of a scheme~$X$ modulo rational equivalence is denoted $A_k(X)$; the direct sum
$\oplus_k A_k(X)$ is denoted $A_*(X)$. We recall that a proper morphism $f\colon  X\to Y$
determines a covariant push-forward homomorphism $f_*\colon A_*(X)\to A_*(Y)$ 
preserving dimension, while a flat or l.c.i.~morphism $f$ 
determines a contravariant
pull-back/Gysin\index{Gysin homomorphism}
homomorphism~$f^*$. If $X$ is complete, that is, the structure 
morphism $X\to \Spec k$ is proper, then the push-forward of a class $\alpha$ via 
$A_*(X)\to A_*(\Spec k)=\Zbb$ is the 
{\em degree\/}\index{degree} 
of $\alpha$, denoted $\int\alpha$ 
or $\int_X \alpha$. Intuitively, $\int\alpha$ is the `number of points' in the zero-dimensional 
component of $\alpha$. Vector bundles determine {\em Chern classes,\/} which act as 
operators on the Chow group, and satisfy various compatibilities (such as the
`projection formula') with morphisms. The 
Chern\label{Chern class} 
class $c_i(E)\cap-$ of a vector bundle~$E$
on $X$ defines group homomorphisms 
$A_k(X)\mapsto A_{k-{\rk E}}(X)$. The `total'
Chern class of $E$ is the operator

\[
c(E) = 1+ c_1(E) + \cdots + c_{\rk E}(E)\saf.
\]
For $i>\rk E$, $c_i(E)=0$. If $\cO(D)$ is the line bundle corresponding to a Cartier
divisor $D$, the action of the operator $c_1(\cO(D))$ amounts to `intersecting by $D$':
if $V\subseteq X$ is a variety not contained in $D$, $c_1(\cO(D))\cap [V]$ is the
class of the Cartier divisor obtained by restricting $D$ to $V$; we write
$c_1(\cO(D))\cap \alpha=D\cdot \alpha$.
Every vector bundle $E\to X$ determines an associated projective bundle `of lines', which 
we denote $\pi: \Pbo(E)\to X$. This bundled is endowed with a tautological subbundle
$\cO_E(-1)$ of $\pi^*E$; its dual $\cO_E(1)$, which restricts to the line bundle of a 
hyperplane in each fiber of $\pi$, plays a distinguished role in the theory.

Our reference for these notions is William Fulton's text, \cite{85k:14004}; Chapters~3--5
of the survey~\cite{MR735435} offer an efficient and well-motivated summary.
A reader who is more interested in topological aspects will not miss much by assuming
throughout that $k=\Cbb$ and replacing the Chow group with homology. 
The constructions in intersection theory are compatible with analogous constructions in 
this context, as detailed in Chapter~19 of~\cite{85k:14004}.\smallskip

\subsection{Segre classes of vector bundles, cones, and subschemes}\label{ss:Segredefs}
\index{Segre class}
Let $V\subseteq \Pbb^n$ be any subvariety. The degree of $V$ may be expressed
as the intersection number of $V$ with a general linear subspace of complementary
dimension:
\begin{equation}\label{eq:degdef}
\deg V=\int_{\Pbb^n} H^{n-\dim V}\cdot V\saf,
\end{equation}
where $H=c_1(\cO(1))$ is the hyperplane class in $\Pbb^n$ and, as recalled above,
$\int_{\Pbb^n} \gamma$ 
denotes the degree of the zero-dimensional component of a rational equivalence class
$\gamma\in A_*(\Pbb^n)$. In fact, by definition $\int_{\Pbb^n} \gamma$ denotes the
integer $m$ such that $\pi_*\gamma = m[p]$, where $\pi\colon \Pbb^n \to p=\Spec k$
is the constant map to a point. With this in mind, we can rewrite~\eqref{eq:degdef} as
\begin{equation}\label{eq:toysegre}
(\deg V)[p] = \pi_* \left(\sum_{i\ge 0} c_1(\cO(1))^i \cap [V]\right)\in A_*(p)\saf\colon
\end{equation}
the only nonzero term on the right is obtained for $i=n-\dim V$, for which it equals
$(H^{n-\dim V}\cdot V)[p]$.

The right-hand side of~\eqref{eq:toysegre} may be viewed as the prototype of a
{\em Segre class,\/} for the trivial projective bundle $\pi\colon \Pbb^n \to p$. 
More generally,
let $X$ be a scheme and let $E$ be a vector bundle over $X$. Denote by
$\pi\colon \Pbo(E)\to X$ the projective bundle of lines in $E$, i.e., let
\begin{equation}\label{eq:prob}
\Pbo(E)=\Proj(\Sym^*_{\cO_X}(\cE^\vee))\saf.
\end{equation}
where $\cE^\vee$ is dual of the sheaf $\cE$ of sections of $E$.
Then for every class $G\in A_*(\Pbo(E))$ we may consider the class
\begin{equation}\label{eq:sigdef}
\Segre_E(G):=\pi_* \left(\sum_{i\ge 0} c_1(\cO_E(1))^i \cap G\right)\in A_*(X)\saf;
\end{equation}
this defines a homomorphism $A_*(\Pbo(E))\to A_*(X)$, which we loosely call
a {\em Segre\index{Segre!operator}
operator.\/}
Even if $G$ is pure-dimensional, $\Segre_E(G)$ will in general consist 
of components of several dimensions. As in the simple motivating example
presented above, however, its effect is to encode intersection-theoretic
information on $G$ in terms of a class in $A_*(X)$.

\begin{example}
Let $X=\Pbb^m$ and let $E=k^{n+1}\times X$ be a free bundle. Then 
$\Pbo(E)\cong \Pbb^m\times \Pbb^n$, and the morphism $\pi\colon
\Pbb^m \times \Pbb^n \to \Pbb^m$ is the projection on the first factor.
If $G\in A_m (\Pbb^m\times \Pbb^n)$ is a class of dimension~$m$
(to fix ideas), then
\[
G = \sum_{i= 0}^m g_i H^{n-i} h^i \cap [\Pbb^{m+n}]\saf,
\]
where $h$, $H$ denote the (pull-backs of the) hyperplane classes from
$\Pbb^m$, $\Pbb^n$, respectively, and $g_i\in \Zbb$ are integers.
Then $H=c_1(\cO_E(1))$, hence
\[
\Segre_E(G)=
\pi_* \left(\sum_{i\ge 0} c_1(\cO_E(1))^i \cap G\right)
=\sum_{i=0}^m g_i h^i\cap [\Pbb^m]
\]
recovers the information of the coefficients $g_i$ determining the class $G$.
\qede\end{example}

Applying $\Segre_E$ to classes $G=\pi^*(\gamma)$ obtained as pull-backs of
classes from the base defines the total 
{\em Segre class\/}\index{Segre class!of a vector bundle} 
of $E$ as an
operator on $A_*(X)$:
\begin{equation}\label{eq:segdef}
s(E)\cap \gamma:= \Segre_E(G)=\pi_*\left(\sum_{i\ge 0} c_1(\cO_E(1))^i 
\cap \pi^*(\gamma)\right)\saf.
\end{equation}
It is a fundamental observation that $s(E)$ is {\em inverse to the 
Chern\index{Chern!class}
class\/} operator, in the sense that
\begin{equation}\label{eq:scinv}
c(E)\cap (s(E)\cap \gamma) = \gamma
\end{equation}
for all $\gamma\in A_*(X)$. 
(Since the intersection product is commutative, it follows that $c(E)$, $s(E)$ are
two-sided inverses to each other.)
Indeed, consider the tautological sequence
\[
\xymatrix{
0 \ar[r] & \cO_E(-1) \ar[r] & \pi^*E \ar[r] & Q \ar[r] & 0\saf.
}
\]
By the Whitney formula,
\[
c(\pi^*E)c(\cO_E(-1))^{-1}\cap \pi^*\gamma =  c(Q)\cap \pi^*\gamma\saf;
\]
by the projection formula,
\[
c(E)\cap \pi_*(c(\cO_E(-1))^{-1}\cap \pi^*\gamma)
=\pi_* (c(Q)\cap \pi^*\gamma)\saf.
\]
Since $Q$ has rank $\rk E-1$, that is, equal to the relative dimension of $\pi$,
\[
\pi_* (c(Q)\cap \pi^*\gamma) = m \gamma
\]
for some integer $m$. Restricting to a fiber shows that $m=1$, and~\eqref{eq:scinv}
follows.

In fact, these considerations may be used to {\em define\/} Chern classes of vector bundles:
Chern classes of line bundles may be defined independently in terms of their relation with
Cartier divisors (as mentioned above); once Chern classes of line bundles are available, 
\eqref{eq:segdef} may 
be used to define Segre classes of vector bundles; and then one may define the Chern
class of a vector bundle $E$ as the inverse of its Segre class, and proceed to prove
all standard properties of Chern classes. This is the approach taken 
in~\cite{85k:14004}, Chapters~2 and~3.

Other choices in~\eqref{eq:sigdef} also lead to interesting notions: whenever
a tautological line bundle $\cO(1)$ is defined, one may define a corresponding Segre
class. For example, we could apply the expression in~\eqref{eq:sigdef} to
\[
\Proj(\Sym^*_{\cO_X}(\cF))
\]
to define a 
Segre class\index{Segre class!of a coherent sheaf}
for any coherent sheaf $\cF$; one instance will appear below, in~\S\ref{ss:CF}.
More generally, the definition may be applied to every {\em projective cone.\/} A 
{\em cone\/}\index{cone} 
over $X$ is a scheme
\[
C=\Spec(\cS^*)=\Spec(\oplus_{k\ge 0} \cS^k)
\]
where $\cS^*$ is a sheaf of graded $\cO_X$ algebras and we assume (as is standard) 
that there is a surjection $\cS^0\twoheadrightarrow \cO_X$, $\cS^1$ is coherent, 
and $\cS^*$ is generated by $\cS^1$ over $\cS^0$. It is useful to enlarge cones by 
a trivial factor: with notation as above, we let
\begin{equation}\label{eq:projcom}
C\oplus \one:=\Spec(\cS^*[t])=\Spec(\oplus_{k\ge 0} (\oplus_{i=0}^k \cS^i t^{k-i}))\saf,
\end{equation}
so that $C$ may be viewed as a dense open subset of its 
`projective completion'\index{projective completion}
$\Pbo(C\oplus \one)=\Proj(\cS^*[t])$; in fact, $C$ is naturally identified with the 
complement of $\Pbo(C)=\Proj(\cS^*)$ in $\Pbo(C\oplus \one)$. 
Cones over $X$ are endowed with a natural projection $\pi$ to $X$ and with 
a tautological line bundle $\cO(1)$, so we may defined the 
{\em Segre class\/}\index{Segre class!of a cone}
of $C$ in the style of~\eqref{eq:sigdef}:
\[
s(C):=\pi_* \left(\sum_{i\ge 0} c_1(\cO_{C\oplus \one}(1))^i \cap [\Pbo(C\oplus \one)]
\right)\in A_*(X)\saf.
\]
If $C$ is a subcone of a vector bundle $E$ (as is typically the case), then
\begin{equation}\label{eq:sCSeg}
s(C) = \Segre_{E\oplus \one}([\Pbo(C\oplus \one)])\saf.
\end{equation}
A case of particular interest is the cone associated with sheaf of $\cO_X$ algebras
\[
\oplus_{k\ge 0} \cI^k/\cI^{k+1}
\]
where $\cI$ is the ideal sheaf defining $X$ as a closed subscheme of a scheme~$Y$.
The corresponding cone $\Spec (\oplus_{k\ge 0} \cI^k/\cI^{k+1})$ is the 
{\em normal\index{normal cone}\index{cone!normal} 
cone\/} of $X$ in $Y$, denoted~$C_XY$. 

\begin{definition}
\index{Segre class!of a subscheme}
Let $X\subseteq Y$ be schemes. The {\em Segre class of $X$ in 
$Y$\/} is the Segre class of the normal cone of $X$ in $Y$:
\begin{equation}\label{eq:SegfSeg}
s(X,Y):= s(C_XY)
=\pi_* \left(\sum_{i\ge 0} c_1(\cO(1))^i \cap [\Pbo(C_XY\oplus \one)]
\right)\saf,
\end{equation}
an element of $A_*(X)$.
\qede\end{definition}

\begin{remark}\label{rem:addfac}
The addition of the trivial factor $\one$ is needed to account for the possibility that
e.g., $\Pbo(C)$ may be {\em empty.\/} For instance, this is the case if $X=Y$,
i.e., $\cI=0$: then $C_XX=\Spec(\cO_X)=X$, $\Pbo(C_XX\oplus \one)=X$,
and $s(X,X)=[X]$.

If $X$ does not contain any irreducible component of $Y$, then
\[
s(X,Y) = \pi_*\left(\sum_{i\ge 0} c_1(\cO(1))^i \cap [\Pbo(C_XY)]\right)\saf.
\]
In general, it is easy to check that $s(C)=s(C\oplus \one)$; in particular, the
notation is compatible with the notation $s(E)$ for vector bundles used above.
\qede\end{remark}

\subsection{Properties}\label{ss:Segpro}
---A closed embedding $X\subseteq Y$ is 
{\em regular,\/}\index{regular embedding} 
of codimension $d$, if 
the ideal $\cI$ of $X$ is locally generated by a regular sequence of length $d$. 
In this case, one can verify that
\[
\oplus_{k\ge 0} \cI^k/\cI^{k+1} \cong \Sym_{\cO_X}^*(\cI/\cI^2)\saf,
\]
so that the normal {\em cone\/} $C_XY$ is a rank-$d$ {\em vector bundle,\/}
denoted $N_XY$. From the definitions reviewed in~\S\ref{ss:Segredefs} it is then
clear that the Segre\index{Segre class!of a regular embedding} 
class of $X$ in $Y$
equals the inverse Chern class of its normal bundle: 
\[
s(X,Y) = s(N_XY)\cap [X] = c(N_XY)^{-1}\cap [X]\saf.
\]

\begin{example}\label{ex:Cardiv}
Let $D\subseteq Y$ be an effective Cartier divisor. Then $N_DY$ is the
line bundle $\cO(D)$, so that
\[
s(D,Y) = c(\cO(D))^{-1}\cap [D] = (1+D)^{-1}\cap [D] \saf.
\]
Abusing notation (writing $D$ for $[D]$), we may write
\[
s(D,Y) = \frac{D}{1+D} = D-D^2 +D^3 -\cdots\saf.
\]
For instance, if $H$ is a hyperplane in $\Pbb^n$, then
\[
s(H,\Pbb^n) = H-H^2+\cdots = [\Pbb^{n-1}]-[\Pbb^{n-2}]+[\Pbb^{n-3}]-\cdots +(-1)^{n-1} [\Pbb^0]
\]
viewed as a class on $H=\Pbb^{n-1}$.

More generally, if $X= D_1\cap \cdots \cap D_r$ is a 
complete intersection\index{Segre class!of a complete intersection}
of $r$ Cartier divisors, then $X\subseteq Y$ is a regular embedding, with
$N_XY=\cO(D_1)\oplus \cdots \oplus \cO(D_r)$, and we may write
\[
s(X,Y) = \frac{[X]}{(1+D_1)\cdots (1+D_r)}\in A_*(X)\saf.
\]
Individual components of this Segre class may be written as symmetric
polynomials in the classes $D_1,\dots, D_r$.
\qede\end{example}

---By definition, the blow-up of $Y$ along $X$ is 
$
B\ell_XY:=\Proj(\oplus_k \cI^k)
$;
the {\em exceptional divisor\/} $E$ of this blow-up is the inverse image of $X$,
so it is defined by the ideal
\begin{equation}\label{eq:ideed}
\cI\oplus \cI^2 \oplus \cI^3\oplus \cdots\quad \subseteq \quad
\cO_Y\oplus \cI\oplus \cI^2\oplus \cdots\saf.
\end{equation}
it follows that
\[
E = \Proj(\oplus_k \cI^k/\cI^{k+1}) = \Pbo(C_XY)\overset \pi\to X\saf.
\]
That is, the 
exceptional\index{exceptional divisor!as a projective normal cone} 
divisor is a concrete realization of the projective
normal cone of $X$ in $Y$. Further, \eqref{eq:ideed} shows that the ideal sheaf 
of $E$ in $B\ell_XY$ is the twisting sheaf
~$\cO(1)$. It follows that $c_1(\cO(1))=-E$,
and therefore
\[
\sum_{i\ge 0} c_1(\cO(1))^i \cap [\Pbo(C_XY)] = E-E^2+E^3 -\cdots \in A_*(E)\saf.
\]
If $X$ does not contain irreducible components of $Y$, it follows
(cf.~Remark~\ref{rem:addfac}) that 
\begin{equation}\label{eq:SegfromE}
s(X,Y) = \pi_*(E-E^2+E^3-\cdots)\saf.
\end{equation}
This observation (and various refinements and alternatives) may be used to construct 
algorithms to compute Segre classes; see~\cite{MR1956868}, \cite{EJP}, \cite{MR3385954}, 
\cite{MR3608229}, \cite{MR4011552} for a sample of approaches and applications. 
The algorithms in the recent paper~\cite{MR4011552} by Corey Harris and Martin Helmer
are implemented in the powerful package {\tt SegreClasses} 
(\cite{SegreClassesSource}) available in the standard implementation of Macaulay2 (\cite{M2}) .

The assumption that $X$ does not contain irreducible components of $Y$
is not a serious restriction: as we have noted that $s(C) = s(C\oplus \one)$
for a cone $C$ (cf.~Remark~\ref{rem:addfac}), it follows that
\[
s(X,Y) = s(X,Y\times \Abb^1)\saf,
\]
where on the right we view $X\cong X\times \{0\}$ as a subscheme of 
$Y\times \Abb^1$. Thus,~\eqref{eq:SegfromE} may be used to compute $s(X,Y)$ 
in general, by employing the exceptional divisor $E$ of the blow-up of 
$Y\times \Abb^1$ along $X$.

---The construction of normal cones is functorial with respect to suitable types of
morphisms. This leads to the following useful result.

\begin{proposition}[{\cite[Proposition~4.2]{85k:14004}}]\label{prop:flapro}
Let $Y$, $Y'$ be pure-dimensional schemes, $X\subseteq Y$ a closed subscheme,
and let $f: Y'\to Y$ be a morphism, and $g:f^{-1}(X)\to X$ the restriction. Then
\begin{itemize}
\item If $f$ is flat, then $s(f^{-1}(X),Y') = g^* s(X,Y)$.
\item If $Y$ and $Y'$ are varieties and $f$ is proper and onto, then 
$g_* s(f^{-1}(X), Y') = (\deg f) s(X,Y)$.
\end{itemize}
\end{proposition}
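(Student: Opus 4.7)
The plan is to prove both claims by analyzing how the normal cone and its tautological line bundle transform under $f$, and then applying flat base change (for the flat statement) or the projection formula together with a cycle-level degree computation (for the proper statement). Set $X' := f^{-1}(X)$, let $g : X' \to X$ be the restriction, and write $\cI \subseteq \cO_Y$ and $\cI' := \cI \cdot \cO_{Y'}$ for the ideal sheaves of $X$ and $X'$.

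First, I would observe that for \emph{any} morphism $f$ the natural graded surjection
$$g^*\!\left(\bigoplus_k \cI^k/\cI^{k+1}\right) \twoheadrightarrow \bigoplus_k \cI'^k/\cI'^{k+1}$$
corresponds to a closed embedding of cones $C_{X'}Y' \hookrightarrow g^*C_XY$, which after adding the trivial factor and projectivizing yields a closed embedding
$$\bar\iota : \Pbo(C_{X'}Y'\oplus\one) \hookrightarrow g^*\Pbo(C_XY\oplus\one)$$
under which the tautological $\cO(1)$ on the source is the restriction of the pullback of the tautological $\cO(1)$ on the target.

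In the flat case, tensoring $0 \to \cI^{k+1} \to \cI^k \to \cI^k/\cI^{k+1} \to 0$ with $\cO_{Y'}$ over $\cO_Y$ preserves exactness, whence $\cI'^k = f^*\cI^k$ and $\cI'^k/\cI'^{k+1} = g^*(\cI^k/\cI^{k+1})$. Thus $\bar\iota$ is an isomorphism, producing a cartesian square with bottom arrow $g$ and top arrow a flat morphism $\bar g$ satisfying $\bar g^*\cO(1) = \cO(1)$. Applying flat base change $g^*\pi_* = \pi'_*\bar g^*$ termwise to the defining formula~\eqref{eq:SegfSeg} then yields $s(X',Y') = g^*s(X,Y)$.

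For the proper case, composing $\bar\iota$ with the projection from $g^*\Pbo(C_XY\oplus\one)$ produces a proper morphism $\bar f : \Pbo(C_{X'}Y'\oplus\one) \to \Pbo(C_XY\oplus\one)$ with $\bar f^*\cO(1) = \cO(1)$ and $\pi \circ \bar f = g \circ \pi'$. The projection formula then reduces the claim to the cycle-level identity
$$\bar f_*\,[\Pbo(C_{X'}Y'\oplus\one)] = (\deg f)\cdot[\Pbo(C_XY\oplus\one)]\,.$$
I expect this identity to be the main obstacle. I would attack it by generic flatness: a dense open $V \subseteq Y$ exists over which $f$ is flat, and the already-established flat case then forces $\bar f$ to be flat of relative dimension $\dim Y' - \dim Y$ over the preimage of $X\cap V$. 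When $\dim Y' = \dim Y$, $\bar f$ is generically finite of degree $\deg f$ over each top-dimensional component of the target, and comparing generic multiplicities yields the identity; when $\dim Y' > \dim Y$, both sides vanish, since $\deg f = 0$ by convention and the image of $\bar f$ has strictly smaller dimension than its source.
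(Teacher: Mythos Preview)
The paper does not supply its own proof of this proposition; it simply quotes the result from Fulton's book. So there is nothing in the paper to compare against, and I will assess your argument on its own merits.

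Your treatment of the flat case is correct, and in the proper case your reduction to the cycle identity $\bar f_*\,[\Pbo(C_{X'}Y'\oplus\one)] = (\deg f)\,[\Pbo(C_XY\oplus\one)]$ is the right move. The gap is in how you propose to prove that identity. Generic flatness only gives you a dense open $V\subseteq Y$ over which $f$ is flat; there is no reason $V$ should meet $X$, let alone meet every support of a component of $C_XY$ densely. Concretely, take $f\colon B\ell_p\Pbb^2\to\Pbb^2$ the blow-up at a point and $X=\{p\}$: the flat locus is $\Pbb^2\smallsetminus\{p\}$, which misses $X$ entirely, so your argument yields no information about $\deg\bar f$ (even though the identity does hold here, with $\deg f=1$). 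The same obstruction arises whenever a distinguished subvariety of $X$ lies in the non-flat locus.

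The clean fix (and this is essentially Fulton's argument) is to use blow-ups instead of a flat locus. The universal property of blow-ups furnishes a proper surjection $\tilde f\colon B\ell_{X'}Y'\to B\ell_XY$, and since both blow-ups are birational onto $Y'$, $Y$, one has $\deg\tilde f=\deg f$. The exceptional divisors satisfy $E'=\tilde f^*E$ as Cartier divisors, so the projection formula for intersection with a Cartier divisor gives
\[
\tilde f_*[E']=\tilde f_*(\tilde f^*E\cdot[B\ell_{X'}Y'])=E\cdot(\deg f)[B\ell_XY]=(\deg f)[E]\saf.
\]
Replacing $Y$ by $Y\times\Abb^1$ and $X$ by $X\times\{0\}$ (so that the exceptional divisor becomes $\Pbo(C_XY\oplus\one)$) then yields exactly the identity you need. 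This avoids any appeal to where $f$ happens to be flat.
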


Here, $f$ realizes the field of rational functions on $Y'$ as an extension of the field
of rational functions on $Y$, and $\deg f$ is the degree of this extension if
$\dim Y=\dim Y'$, and $0$ otherwise. In particular, if $Y'$ and $Y$ are varieties and 
$f: Y'\to Y$ is proper, onto, and {\em birational,\/} then
\[
s(X,Y) = g_* (s(f^{-1}(X), Y')) \saf.
\]
This {\em birational invariance\/}\index{Segre class!birational invariance}
 of Segre classes is especially useful.

\begin{example}
We have verified a particular case of this fact already. Indeed, let $X\subsetneq Y$
be a proper subscheme of a variety, and let 
$f:Y'=B\ell_XY\to Y$ be the blow-up of $Y$ 
along~$X$. Then $f^{-1}(X)=E$ is the exceptional divisor, a Cartier divisor of~$B\ell_XY$,
therefore (Example~\ref{ex:Cardiv})
\[
s(f^{-1}(X), Y') = E - E^2 + E^3 - \cdots\saf.
\]
The birational invariance of Segre classes implies that, letting $g=f|_E: E\to X$,
we must have
\[
s(X,Y) = g_*(E-E^2+E^3-\cdots)\saf;
\]
we have verified this above in~\eqref{eq:SegfromE} 
(where $g$ is denoted $\pi$).
\qede\end{example}

---The Segre class $s(X,Y)$ depends crucially on the scheme structure of $X$; in
general, $s(X,Y) \ne s(X_\text{red},Y)$. On the other hand, different scheme
structures may lead to the same Segre class, and this is occasionally useful.
For instance, assume that the ideals $\cI_{X,Y}$ and $\cI_{X',Y}$ of two subschemes
$X$, $X'$ of $Y$ have the same integral closure. Then $s(X,Y)=s(X',Y)$. Indeed,
we may assume $\cI_{X,Y}$ is a reduction of~$\cI_{X',Y}$; then we have
a finite morphism $B\ell_{X'}Y\to B\ell_XY$ preserving the exceptional divisors
(\cite[Proposition~1.44]{MR2153889}), so the equality follows 
from~\eqref{eq:SegfromE} and the projection formula.
See Example~\ref{ex:gmSm} below for a concrete example of this 
phenomenon.\smallskip

{\em Summary (and shortcut):\/} 
A reader who may not be too comfortable with the algebro-geometric language of
$\Proj$ and cones employed so far may use the following as a characterization
(and hence an alternative definition) of 
Segre\index{Segre class}
classes.

Let $Y$ be a variety.
Every closed embedding $X\subseteq Y$ determines a {\em Segre class\/} 
$s(X,Y)\in A_*(X)$. This class is characterized by the following properties:
\begin{itemize}
\item
If $X\subseteq Y$ is a regular embedding, with normal bundle $N_XY$, then
\[
s(X,Y) = c(N_XY)^{-1}\cap [X]\saf;
\]
\item
if $f: Y'\to Y$ is proper, onto, birational morphism of varieties, and 
$g: f^{-1}(X)\to X$ is the restriction of $f$, then
\[
s(X,Y) = g_* s(f^{-1}(X),Y')\saf.
\]
\end{itemize}
Indeed, by blowing up $Y$ along $X$, the second property reduces the 
computation of Segre class to the case of Cartier divisors, which is covered
by the first property.

Unlike this characterization, the definition given in~\S\ref{ss:Segredefs} does not 
require the ambient scheme $Y$ to be a variety. In our applications, this more 
general situation will not be important.
In any case we note that if $Y$ is pure-dimensional, with irreducible components 
$Y_i$ (taken with their reduced structure) one can in fact show 
(\cite[Lemma~4.2]{85k:14004}) that
\begin{equation}\label{eq:segunio}
s(X,Y) = \sum_i m_i s(X\cap Y_i,Y_i)\saf,
\end{equation}
where $m_i$ is the {\em geometric multiplicity\/} of $Y_i$ in $Y$, and the classes
on the right-hand side are implicitly pushed forward to $X$. Each $s(X\cap Y_i,Y_i)$
is the Segre class of a subscheme of a variety, thus it is determined by the
characterization given above.

\subsection{A little intersection theory}
\index{Fulton-MacPherson intersection theory}\index{intersection theory!Fulton-MacPherson}
Segre classes play a key role in Fulton-MacPherson's intersection theory; indeed,
the very definition of intersection product may be expressed in terms of Segre 
classes. By way of motivation for the formula giving an intersection product, 
consider a vector bundle
\[
p: E \to X
\]
on a scheme $X$. Then it may be verified (\cite[Theorem~3.3(a)]{85k:14004}) that 
the pull-back $p^*: A_*(X) \to A_*(E)$ is an {\em isomorphism.\/} 

\begin{remark}
The fact that $p^*$ is {\em surjective\/} may seem counterintuitive, as it implies that 
a vector bundle over $X$ has no nonzero rational equivalence classes of codimension larger
than the dimension of $X$. See~\cite[\S1.9]{85k:14004}, particularly Proposition~1.9
and Example~1.9.2. This fact can be viewed as a generalization of the observation that 
affine space $\Abb^n$ has no nonzero classes of dimension~$<n$. 
\qede\end{remark}

We may therefore
define a 
`Gysin\index{Gysin homomorphism} 
homomorphism' $\sigma^*: A_*(E)\to A_*(X)$, as the inverse of $p^*$.
That fact that for any subvariety~$Z\subseteq X$, 
\[
\sigma^* ([p^{-1}(Z)])= \sigma^*(p^*[Z]) = [Z]
\]
(and linearity) suggests that $\sigma^*(\alpha)$ should be interpreted as the `intersection
class' of $\alpha$ with the zero-section of $E$.
\begin{center}
\includegraphics[scale=.4]{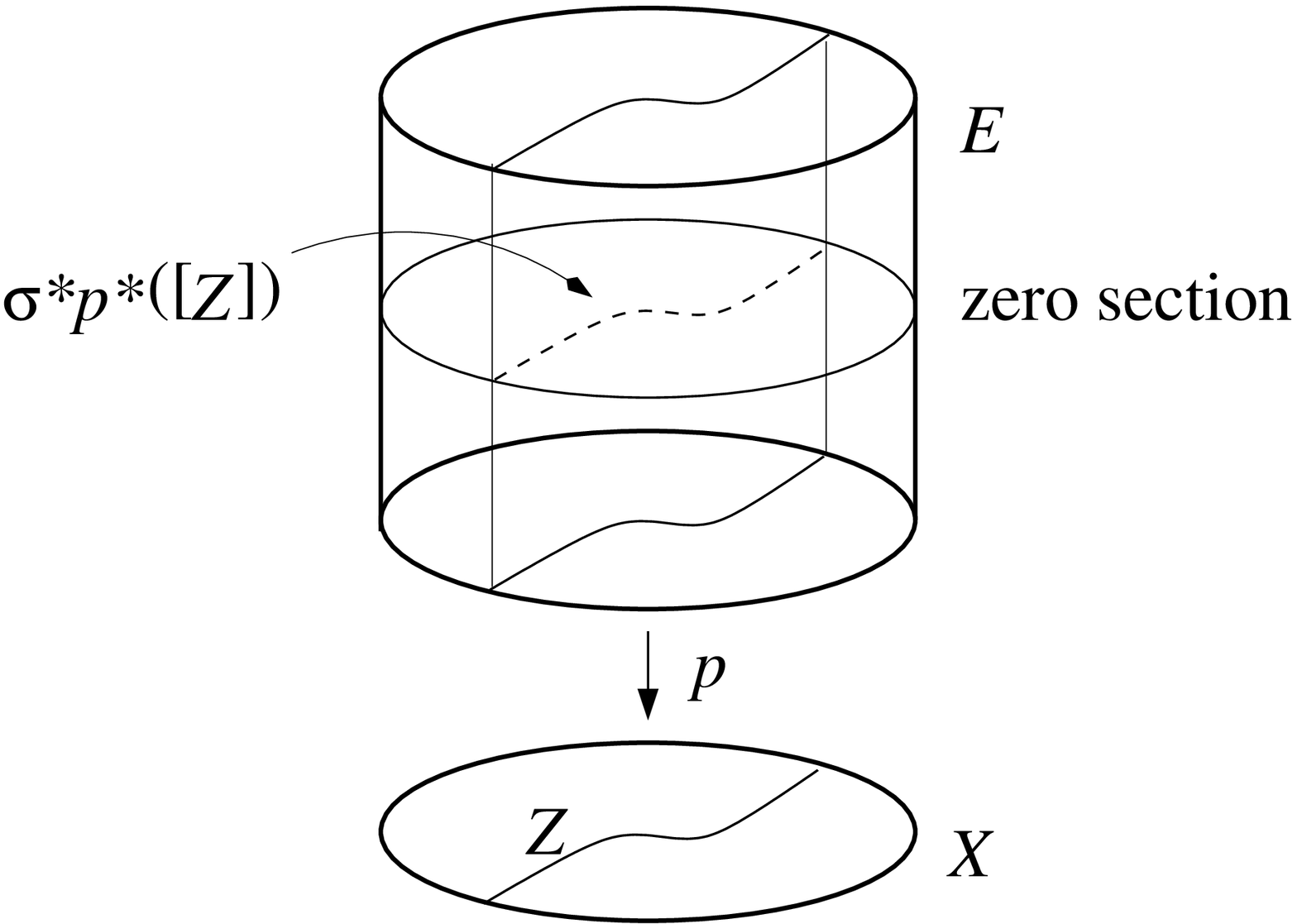}
\end{center}

We can get an explicit expression for $\sigma^*(\alpha)$ in terms of the $\Segre$ 
homomorphism from~\eqref{eq:sigdef}. 
For this, consider
$E$ as a dense open subset of its projective completion $\Pbo(E\oplus \one)$,
and let $\pi: \Pbo(E\oplus \one)\to X$ be the projection. If $\alpha\in A_k(E)$,
then $\alpha=p^*(\sigma^*(\alpha))$ is the restriction to $E$ of 
$\pi^*(\sigma^*(\alpha))$. An expression for $\sigma^*(\alpha)$ may be given
in terms of {\em any\/} class $\overline\alpha\in A_k(\Pbo(E\oplus\one))$ restricting 
to $\alpha$ on $E$.

\begin{lemma}\label{lem:zssc}
Let $\alpha\in A_k(E)$. With notation as above, 
\[
\sigma^*(\alpha) = \left\{c(E)\cap \Segre_{E\oplus\one}(\overline \alpha) \right\}_{k-\rk E}
\]
where $\{\cdots\}_\ell$ is the term of dimension $\ell$ in the class within braces,
and $\overline \alpha$ is any class in $A_k(\Pbo(E\oplus \one))$ restricting
to $\alpha$ on $E$.
\end{lemma}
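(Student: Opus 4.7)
My plan is to work inside the projective completion $P := \Pbo(E\oplus \one)$, which contains $E$ as the complement of the ``hyperplane at infinity'' $\Pbo(E)$ and carries the tautological exact sequence
\[
0 \to \cO_P(-1) \to \pi^*(E\oplus \one) \to Q \to 0
\]
with $\rk Q = \rk E$. Setting $\xi := c_1(\cO_P(1))$, the Whitney formula combined with $c(E\oplus \one) = c(E)$ gives $c(Q) = \pi^* c(E)\cdot\sum_{i\ge 0}\xi^i$, so the projection formula yields
\[
\pi_*\bigl(c(Q)\cap \overline\alpha\bigr) = c(E)\cap \pi_*\Bigl(\sum_i \xi^i \cap \overline\alpha\Bigr) = c(E)\cap \Segre_{E\oplus \one}(\overline\alpha).
\]
Since $c_j(Q)\cap \overline\alpha$ has dimension $k-j$ and $c_j(Q)=0$ for $j>\rk E$, the dimension-$(k-\rk E)$ component of this class is precisely $\pi_*(c_{\rk E}(Q)\cap \overline\alpha)$. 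The problem thus reduces to identifying this with $\sigma^*(\alpha)$.

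Next I would verify that $\pi_*(c_{\rk E}(Q)\cap \overline\alpha)$ depends only on $\alpha$ and not on the chosen extension. The canonical inclusion $\one \hookrightarrow E\oplus \one$ composed with the quotient map defines a global section of $Q$ whose zero locus, fiber-by-fiber, is the single line $\{0\}\oplus k \subset E_x\oplus k$; that is, the zero locus is exactly the zero section $X\hookrightarrow P$, which is disjoint from $\Pbo(E)$. Consequently the section restricts to a nowhere-vanishing section of $Q$ over $\Pbo(E)$, so $j^* c_{\rk E}(Q)=0$, where $j\colon \Pbo(E)\hookrightarrow P$. Any two extensions of $\alpha$ differ by a class of the form $j_*\gamma$, and the projection formula for $j$ gives
\[
c_{\rk E}(Q)\cap j_*\gamma = j_*\bigl(j^* c_{\rk E}(Q)\cap \gamma\bigr) = 0,
\]
establishing independence of the extension.

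Finally, since $p^*\colon A_*(X)\to A_*(E)$ is an isomorphism, it suffices to check the identity when $\alpha = p^*\gamma$, in which case $\sigma^*(\alpha)=\gamma$ by definition. Choosing the natural extension $\overline\alpha := \pi^*\gamma$, a fiberwise computation---$c_{\rk E}(Q)$ restricts to $\xi^{\rk E}$ on each $\Pbb^{\rk E}$-fiber, which is the class of a single point---yields $\pi_*(c_{\rk E}(Q)\cap \pi^*\gamma)=\gamma$, completing the argument. The main obstacle is the independence step: one has to recognize $c_{\rk E}(Q)$ as the algebraic avatar of ``intersection with the zero section $X\subset P$'' and verify, via the canonical section of $Q$, that this top Chern class vanishes on the hyperplane at infinity. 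Once this geometric input is secured, the remainder is essentially dimension bookkeeping and the projection formula.
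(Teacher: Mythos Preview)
Your argument is correct. Both your proof and the paper's share the same architecture: verify the formula for the canonical extension $\overline\alpha=\pi^*\sigma^*(\alpha)$, then show the $(k-\rk E)$-component is independent of the chosen extension. Where you diverge is in the independence step. The paper argues by a dimension count: any class $\beta$ supported on $\Pbo(E)\subset\Pbo(E\oplus\one)$ has its ``shadow'' $c(E)\cap\Segre_{E\oplus\one}(\beta)$ concentrated in dimensions $\ge k-(\rk E-1)$, simply because $\Pbo(E)\to X$ has relative dimension $\rk E-1$; hence $\beta$ cannot affect the $(k-\rk E)$-component. You instead rewrite that component explicitly as $\pi_*(c_{\rk E}(Q)\cap\overline\alpha)$ and then observe that the canonical section of $Q$ coming from $\one\hookrightarrow E\oplus\one$ is nowhere zero on $\Pbo(E)$, forcing $j^*c_{\rk E}(Q)=0$. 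Your route is more geometric and makes transparent why the expression really is ``intersection with the zero section'': the top Chern class $c_{\rk E}(Q)$ is localized at the zero locus of that section, namely the copy of $X$ sitting inside $\Pbo(E\oplus\one)$ away from infinity. The paper's route is shorter and avoids introducing the section, relying only on the structure of Chow groups of projective bundles. Both yield the same conclusion with comparable effort.
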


This statement is equivalent to \cite[Proposition~3.3]{85k:14004}. 
We sketch a verification. As we argued in~\eqref{eq:scinv} (note $c(E\oplus \one)
=c(E)$),
\begin{align*}
\sigma^*(\alpha) &= c(E)\cap (s(E\oplus\one)\cap \sigma^*(\alpha)) \\
&= c(E)\cap \pi_*\left(\sum_{i\ge 0}c_1(\cO(1))^i \cap 
\pi^*(\sigma^*(\alpha)) \right) \\
&= c(E)\cap \Segre_{E\oplus \one}(\pi^*(\sigma^*(\alpha)))\in A_{k-\rk E}(X)\saf.
\end{align*}
Now note that if $\overline\alpha$ is any class in $A_k(\Pbo(E\oplus\one))$
restricting to $\alpha$ on $E$, then
\[
\beta=\overline\alpha-\pi^*(\sigma^*(\alpha))
\]
is supported on the complement $\Pbo(E)$ of $E$ in $\Pbo(E\oplus\one)$.
It follows easily that all components of the class
\[
c(E)\cap \pi_*\left(\sum_{i\ge 0}c_1(\cO(1))^i \cap \beta \right)
\]
have dimension $\ge k-(\rk E-1)$. Thus, the component of dimension $k-\rk E$ of
\[
c(E)\cap \Segre_{E\oplus \one}(\overline\alpha)
\]
agrees with the component of dimension $k-\rk E$ of
\[
c(E)\cap \Segre_{E\oplus \one}(\pi^*(\sigma^*(\alpha)))=\sigma^*(\alpha)
\]
and the statement follows.

A deformation argument reduces to the template of intersecting a class with the
zero-section all intersection situations satisfying
the following requirements. Let $X$ and $V$ be closed subschemes of a scheme $Y$. 
We assume that $V$ is a variety of dimension $m$, and that $X\subseteq V$ is a 
regular embedding of codimension $d$. We have the fiber diagram
\[
\xymatrix{
X\cap V \ar@{^(->}[r] \ar[d]_j & V \ar[d]^i \\
X \ar@{^(->}[r] & Y
}\saf.
\]
The pull-back $i^*\cI_{X,Y}$ of the ideal of $X$ in $Y$ generates the ideal of $X\cap V$
in $V$. This induces a surjection
\[
i^*\Sym^*_{\cO_Y} (\cI_{X,Y}/\cI^2_{X,Y}) =\oplus_{k\ge 0} i^*(\cI_{X,Y}^k/\cI_{X,Y}^{k+1})
\twoheadrightarrow \oplus_{k\ge 0} \cI_{X\cap V,V}^k/\cI_{X\cap V,V}^{k+1}
\]
and consequently realizes $C_{X\cap V}V$ as a closed, $m$-dimensional subscheme
of the pull-back $j^* N_XY$ of the normal bundle of $X$ in $V$. William Fulton and 
Robert MacPherson (cf.~\cite{MR527228}, \cite[Chapter~6]{85k:14004}) define the 
intersection product $X\cdot V\in A_{m-d} (X\cap V)$ to be the intersection of 
$[C_{X\cap V}V]$ with the zero section of the bundle $j^* N_XY$,
defined as above by means of the Gysin morphism:
\[
X\cdot V:= \sigma^*([C_{X\cap V} V])\saf.
\]
As shown in~\cite{85k:14004}, this definition implies all expected properties of an 
intersection product. Applying Lemma~\ref{lem:zssc}, we see that
\begin{align*}
X\cdot V &= \left\{c(j^* N_XY)\cap \Segre_{j^* N_XY\oplus \one}
([\Pbo(C_{X\cap V} V\oplus \one)])\right\}_{m-d} \\
\intertext{since $[\Pbo(C_{X\cap V} V\oplus \one)]$ restricts to $[C_{X\cap V}V]$ on
$j^* N_XY$}
&= \left\{c(j^* N_XY)\cap s(X\cap V,V)\right\}_{m-d}
\end{align*}
(cf.~\eqref{eq:sCSeg} and~\eqref{eq:SegfSeg}). This definition, which we rewrite here
for emphasis:
\begin{equation}\label{eq:FMdef}
X\cdot V := \left\{c(j^* N_XY)\cap s(X\cap V,V)\right\}_{\dim V-\codim_XY}
\end{equation}
is of foundational importance in intersection theory. Note that it assigns an explicit
contribution to $X\cdot V$ to every connected component $Z$ of $X\cap V$:
\begin{equation}\label{eq:FMcont}
\text{contribution of $Z$ to $X\cdot V$:}\quad
\left\{c(N_XY|_Z)\cap s(Z,V)\right\}_{\dim V-\codim_XY}\saf.
\end{equation}
It can be shown that the right-hand side of~\eqref{eq:FMdef} preserves rational equivalence
in the evident sense, so that it defines Gysin homomorphisms $A_k V \to A_{k-d} (X\cap V)$. 
More generally, it defines a homomorphism $A_k Y' \to A_{k-d} (X\times_Y Y')$ for
every morphism $Y'\to Y$. (See~\cite[Chapter~6]{85k:14004}.)

\begin{example}\label{ex:loctcc}
A particular case of~\eqref{eq:FMdef} gives the self-intersection formula of a
regularly embedded subscheme $X$ of $Y$. For this, consider the fiber diagram
\[
\xymatrix{
X \ar[r] \ar[d] & X \ar[d] \\
X \ar[r] & Y
}
\]
and apply~\eqref{eq:FMdef} to obtain
\[
X\cdot X = \{c(N_XY)\cap s(X,X)\}_{\dim X-\codim_XY} = c_d(N_XY)\cap [X]\saf.
\]

For instance, the self-intersection of the zero-section of a vector bundle $E$
on a variety $W$ equals $c_{\rk E}(E)\cap [W]$: indeed, the zero-section is 
regularly embedded, with normal bundle~$E$.

It follows that if $\sigma$ is any section of a vector bundle $E$, then writing
$W$ for the image of the zero-section of $E$, 
\[
\iota_* (W\cdot \sigma(W)) = c_{\rk E}(E)\cap [W]\saf,
\]
where $\iota: Z(\sigma)\to W$ is the embedding of the zero-scheme of $\sigma$. 
Indeed, $\sigma(W)$ is rationally equivalent to the zero-section. 
Again using~\eqref{eq:FMdef}, we can identify the contribution of a union of
connected components $Z$ of $Z(\sigma)$ to $c_{\rk E}(E)\cap [W]$ as
\begin{equation}\label{eq:contrize}
\{ c(E|_Z)\cap s(Z,W) \}_{\dim W-\rk E}\saf,
\end{equation}
`localizing' the top Chern class along the zeros of a section.
(See~\cite[\S14.1]{85k:14004}.)
\qede\end{example}

The requirement that $X$ be {\em regularly\/} embedded in $Y$ is nontrivial. 
It can be bypassed if the ambient scheme $Y$ is a nonsingular variety, say
of dimension $m$. 
Indeed, in this case the diagonal embedding $Y\to Y\times Y$ is regular
with normal bundle $TY$,
and we can interpret the intersection of any two subvarieties $Z,W$ of $Y$ 
as the intersection of the diagonal $\Delta$ with the product $Z\times W$.
The fiber diagram
\[
\xymatrix{
Z\cap W \ar[r] \ar[d]_j & Z\times W \ar[d]^i \\
Y=\Delta \ar[r] & Y\times Y
}
\]
suggests the definition
\begin{equation}\label{eq:nsinde}
[Z]\cdot [W]:= \Delta\cdot (Z\times W)= \{c(j^* TY)\cap s(Z\cap W, Z\times W)\}_{\dim Z+\dim W-m}\saf.
\end{equation}
Note that neither $Z$ nor $W$ need be regularly embedded in $Y$.
This definition passes to rational equivalence and extends by linearity to a 
product $A_*(Y)\times A_*(Y) \to A_*(Y)$
making the Chow group $A_*(Y)$ into a {\em commutative ring.\/} It can be shown
(\cite[Proposition~8.1.1(d)]{85k:14004}) that~\eqref{eq:nsinde} is compatible with 
the previous definition, in the sense that if $Y$ is nonsingular, $Z\subseteq Y$
is a regular embedding, and $W\subseteq Y$ is any subvariety, then 
$[Z]\cdot [W]$ agrees with the definition of $Z\cdot W$ given earlier.

\begin{example}\label{ex:3lines}
Sometimes this intersection product may be used to obtain  information
about a Segre class. For example, consider the three singular quadrics $Q_1,
Q_2,Q_3 \subseteq \Pbb^3$ obtained as unions of two out of three planes in 
general position. For example, $Q_1$ could be defined by the ideal $(x_2 x_3)$,
$Q_2$ by $(x_1 x_3)$, and $Q_3$ by $(x_1 x_2)$. The intersection $J=Q_1\cap Q_2
\cap Q_3$ is the reduced union of three lines through a point. It follows 
(cf.~Example~\ref{ex:nons1}) that 
\begin{equation}\label{eq:3linesc}
\iota_* s(J,\Pbb^3)  = 3[\Pbb^1] + m[\Pbb^0]
\end{equation}
for some integer $m$, where $\iota$ is the embedding of $J$ in $\Pbb^3$.

By B\'ezout's theorem, the intersection product $Q_1\cdot Q_2\cdot Q_3$ 
equals $8$. On the other hand, we may view this intersection product as arising
from the diagram
\[
\xymatrix{
J=Q_1\cap Q_2\cap Q_3 \ar[r] \ar[d]_j & \Pbb^3 \ar[d]^\delta \\
\qquad Q_1\times Q_2\times Q_3 \ar[r] & \Pbb^3\times \Pbb^3\times \Pbb^3
}
\]
where $\delta$ is the diagonal embedding. Using~\eqref{eq:FMdef}, we get 
(omitting an evident pull-back)
\[
\left\{ c(N_{Q_1\times Q_2\times Q_3}\Pbb^3\times\Pbb^3\times\Pbb^3)
\cap s(J,\Pbb^3)\right\}_0=8[\Pbb^0]\saf,
\]
that is, denoting by $H$ the hyperplane class in $\Pbb^3$,
\[
\left\{ (1+2H)^3\cap (3[\Pbb^1] + m[\Pbb^0])\right\} = 8[\Pbb^0]\saf,
\]
which implies $18+m=8$. This determines $m=-10$, and hence
\[
\iota_* s(J,\Pbb^3) = 3[\Pbb^1]-10[\Pbb^0]\saf.
\]
This agrees (as it should) with the result obtained by using the {\tt SegreClasses} package
\cite{SegreClassesSource}: 

{\begin{verbatim}
i1 : load("SegreClasses.m2")

i2 : R=QQ[x0,x1,x2,x3]

i3 : I=ideal(x1*x2,x1*x3,x2*x3)

i4 : segre(I,ideal(0_R))

          3     2
o4 = - 10H  + 3H
          1     1
\end{verbatim}
}

\noindent
(omitting some additional output; and note that the package chooses to call $H_1$
the hyperplane class).

\begin{remark}
We could have chosen the quadrics $Q_1,Q_2,Q_3$ to be the generators of the ideal
of a twisted cubic $C$, and the same argument would show that the push-forward
of $s(C,\Pbb^3)$ also equals $3[\Pbb^1]-10[\Pbb^0]$. In this case, the negative 
coefficient of $[\Pbb^0]$ reflects the fact that the normal bundle to a twisted cubic 
in $\Pbb^3$ is positive. So we could interpret the negative coefficient of $[\Pbb^0]$
in $\iota_* s(J,\Pbb^3)$ as a measure of `positivity' for the normal cone to the scheme
$J$ in $\Pbb^3$.
\qede\end{remark}

The `reverse engineering' technique illustrated above may be used to compute Segre 
classes in broad generality. The approach to the computation of Segre classes in 
projective space developed in~\cite{EJP} is based on an extension of similar methods.
\qede\end{example}

For every class $\overline\alpha\in A_k(\Pbo(E\oplus \one))$, Lemma~\ref{lem:zssc} 
gives an interpretation for the class
\begin{equation}\label{eq:piece}
\left\{c(E)\cap \Segre_{E\oplus\one}(\overline \alpha) \right\}_{k-\rk E}\saf\colon
\end{equation}
this class encodes the class of the restriction of $\overline \alpha$ to $E$.
The other components of the class within braces have an equally compelling
interpretation. If $E$ is a vector bundle of rank $e$ over a scheme $X$, and 
$\pi:\Pbo(E) \to X$ is its projectivization, the Chow group $A_*(\Pbo(E))$ is 
described by a precise structure theorem: for every class $G\in A_k(\Pbo(E))$, 
there exist $e$ unique classes $g_j\in A_j(X)$, $j=k-e+1,\dots, k$ such that
\[
G = \sum_{i=0}^{e-1} c_1(\cO_E(1))^i \cap \pi^*(g_{k-e+1+i})\saf.
\]
(Cf.~\cite[Theorem~3.3(b)]{85k:14004}.)
We call the sum $g_{k-e+1}+\cdots + g_k\in A_*(X)$ the 
{\em shadow\/}\index{shadow} 
of $G$. Note that $G$ may be reconstructed from its shadow and its dimension. 
The following elementary result relates the shadow of $G$ to its Segre class.

\begin{lemma}[{\cite[Lemma~4.2]{MR2097164}}]\label{lem:shad}
With notation as above, the shadow of $G$ is given~by
\[
\sum_{i=0}^e g_{k-e+1+i} = c(E)\cap \Segre_E(G)\saf.
\]
\end{lemma}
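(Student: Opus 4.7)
The plan is to substitute the canonical decomposition of $G$ into the definition of $\Segre_E$ and reduce everything to the basic identity $c(E)\cap s(E)\cap\gamma = \gamma$ recalled in~\eqref{eq:scinv}. Write
\[
G = \sum_{j=0}^{e-1} c_1(\cO_E(1))^j \cap \pi^*(g_{k-e+1+j})
\]
and plug into the Segre operator from~\eqref{eq:sigdef}:
\[
\Segre_E(G) = \sum_{j=0}^{e-1} \pi_*\!\left(\sum_{i\ge 0} c_1(\cO_E(1))^{i+j} \cap \pi^*(g_{k-e+1+j})\right).
\]
The inner sum, after the reindexing $\ell=i+j$, runs over $\ell\ge j$.

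The key step will be to recognize each inner push-forward as $s(E)\cap g_{k-e+1+j}$. By the definition~\eqref{eq:segdef}, applied to the class $\gamma=g_{k-e+1+j}$, one has
\[
s(E)\cap g_{k-e+1+j} = \pi_*\!\left(\sum_{\ell\ge 0} c_1(\cO_E(1))^{\ell} \cap \pi^*(g_{k-e+1+j})\right).
\]
The only subtlety is that in our expansion the summation starts at $\ell=j$ rather than at $\ell=0$. However, because $\pi\colon \Pbo(E)\to X$ has relative dimension $e-1$, every term with $\ell<e-1$ pushes forward to zero for dimension reasons, and since $j\le e-1$ the truncated sum and the full sum have the same push-forward. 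Thus
\[
\Segre_E(G) = \sum_{j=0}^{e-1} s(E)\cap g_{k-e+1+j} = s(E)\cap \sum_{j=0}^{e-1} g_{k-e+1+j}.
\]

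To finish, I would cap with $c(E)$ and invoke~\eqref{eq:scinv}:
\[
c(E)\cap \Segre_E(G) = c(E)\cap s(E)\cap \sum_{j=0}^{e-1} g_{k-e+1+j} = \sum_{j=0}^{e-1} g_{k-e+1+j},
\]
which is the shadow of $G$ by definition.

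There is no serious obstacle here: the argument is essentially bookkeeping, and the one point that deserves care is the truncation of the index range noted above. The uniqueness of the classes $g_j$ appearing in the decomposition of $G$ (the content of the structure theorem \cite[Theorem~3.3(b)]{85k:14004}) is what guarantees that the notion of shadow is well-defined, so no separate verification is needed for that.
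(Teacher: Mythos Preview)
Your argument is correct. The survey paper does not supply its own proof of this lemma; it simply cites \cite[Lemma~4.2]{MR2097164}, so there is no in-paper argument to compare against. Your proof is the natural direct verification: expand $G$ via the structure theorem, push the defining sum for $\Segre_E$ through term by term, observe that the missing low powers of $c_1(\cO_E(1))$ vanish under $\pi_*$ because $\pi$ has relative dimension $e-1$, and then invoke $c(E)\cap s(E)=\mathrm{id}$. The only point worth a word of care---the truncation of the index range---you handled correctly: for each $j\le e-1$ the omitted terms have exponent $\ell<j\le e-1$, hence push forward to zero by the dimension count.
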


With this understood, we see that the class
\[
c(E)\cap \Segre_{E\oplus\one}(\overline \alpha) 
\]
within braces in~\S\ref{eq:piece} is simply the shadow of $\overline\alpha$.
From this point of view, the intersection product $X\cdot V$ is one component
of the shadow of $[C_{X\cap V} V\oplus \one]\in A_*(\Pbo(j^* N_XY\oplus \one))$.
Several classes we will encounter will have natural interpretations as shadows
of classes in suitable projective bundles. 

\subsection{`Residual intersection', and a notation}\label{ss:resanot}
\index{residual intersection}
Let $V$ be a variety, let $X\subseteq V$ be a subscheme, and let $\cL$ be
a line bundle defined on~$X$. We introduce the following notation: if $\alpha$ is
a class in $A_*(X)$, and $\alpha=\oplus_j \alpha^{(j)}$, with 
$\alpha^{(j)}$ of codimension~$j$
{\em in $V$,\/} we let
\begin{equation}\label{eq:defmytens}
\alpha\otimes_V \cL:=\sum_{j\ge 0} s(\cL)^j\cap \alpha^{(j)}
=\sum_{j\ge 0} \frac{\alpha^{(j)}}{c(\cL)^j}\saf.
\end{equation}
This definition was introduced in~\cite{MR96d:14004}. Its notation is motivated by
the following property relating the definition to the ordinary operation of tensor
product: if $E$ is a vector bundle on $X$, or more generally any element in the
K-group of vector bundles on $X$, then for all $\alpha\in A_*(X)$ we have
\begin{equation}\label{eq:notpr1}
(c(E)\cap \alpha) \otimes_M \cL 
= \frac{c(E\otimes \cL)}{c(\cL)^{\rk E}}\cap (\alpha\otimes_M \cL)\saf.
\end{equation}
See~\cite[Proposition~1]{MR96d:14004}; the proof of this fact is elementary.
Equally elementary is the observation that the notation gives an action of
Pic on the Chow group: if $\cL$ and $\cM$ are line bundles on $X$, then
for all $\alpha\in A_*(X)$ we have
\begin{equation}\label{eq:notpr2}
(\alpha\otimes_V \cL)\otimes_V \cM = \alpha\otimes_V (\cL\otimes \cM)\saf.
\end{equation}
See~\cite[Proposition~2]{MR96d:14004}.

The notation introduced above often facilitates computations involving Segre classes. 
One good
example is a formula for the Segre class of a scheme supported on a Cartier divisor, 
along with `residual' (possibly embedded) components. 
Let $D\subseteq V$ be an effective Cartier divisor, and let $R\subseteq V$ a closed 
subscheme. The scheme-theoretic union of $D$ and $R$ is the closed subscheme
$Z\subseteq V$ whose ideal sheaf is the product of the ideal sheaves of $D$ and $R$.
We say that $R$ is the `residual' scheme to $D$ in $Z$. The task is to express
the Segre class of $Z$ in $V$ in terms of the Segre classes of $D$ and of the residual
scheme $R$.

\begin{proposition}\label{prop:ressc}
With notation as above, 
\[
s(Z,V)=s(D,V)+ c(\cO(D))^{-1}\cap (s(R,V)\otimes_V \cO(D))\saf.
\]
\end{proposition}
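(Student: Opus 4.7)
The plan is to reduce to the case where $R$ is itself a Cartier divisor by blowing up $V$ along $R$, and then verify the identity directly on the blow-up. Let $\pi\colon \Til V := B\ell_R V \to V$, and set $\Til R := \pi^{-1}(R)$, $\Til D := \pi^{-1}(D)$, $\Til Z := \pi^{-1}(Z)$. Then $\Til R$ is a Cartier divisor by construction of the blow-up; $\Til D$ is Cartier since $D$ was; and because $\cI_Z = \cI_D\cdot \cI_R$, the inverse-image ideal of $Z$ equals $\cI_{\Til D}\cdot \cI_{\Til R}$, a product of locally principal ideals and hence itself locally principal. Thus $\Til Z$ is a Cartier divisor on $\Til V$ with $\Til Z=\Til D+\Til R$ as divisors.

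For the Cartier computation on $\Til V$, set $d=c_1(\cO(\Til D))$ and $r=c_1(\cO(\Til R))$; Example~\ref{ex:Cardiv} gives $s(\Til D,\Til V)=d/(1+d)$, $s(\Til R,\Til V)=r/(1+r)$, and $s(\Til Z,\Til V)=(d+r)/(1+d+r)$. The codimension-$j$ component of $s(\Til R,\Til V)$ in $A_*(\Til V)$ is $(-1)^{j-1}r^j$, so summing the resulting geometric series gives
\[
s(\Til R,\Til V)\otimes_{\Til V}\cO(\Til D) = \sum_{j\ge 1}\frac{(-1)^{j-1}r^j}{(1+d)^j} = \frac{r}{1+d+r}.
\]
Dividing by $1+d$ and adding $d/(1+d)$ then produces $(d+r)/(1+d+r)$, matching $s(\Til Z,\Til V)$.

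To conclude, I apply $\pi_*$. By the birational invariance of Segre classes (Proposition~\ref{prop:flapro}), $\pi_* s(\Til Z,\Til V)=s(Z,V)$, $\pi_* s(\Til D,\Til V)=s(D,V)$, and $\pi_* s(\Til R,\Til V)=s(R,V)$. Since $\cO(\Til D)=\pi^*\cO(D)$, the projection formula makes $\pi_*$ commute with the operator $c(\cO(\Til D))^{-1}\cap(-)$. The main subtle point is the corresponding compatibility for the tensor notation, namely $\pi_*(\alpha\otimes_{\Til V}\pi^*\cL)=(\pi_*\alpha)\otimes_V\cL$: this uses that $\pi$, being birational, preserves dimensions of cycles, so the codimension-graded pieces appearing in the definition of $\otimes$ transfer correctly from $\Til V$ to $V$. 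Granting this, applying $\pi_*$ to the identity established on $\Til V$ yields the claim.
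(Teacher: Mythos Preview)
Your argument is correct and is essentially the standard proof: the paper does not prove this proposition but cites \cite[Proposition~9.2]{85k:14004}, and Fulton's proof there proceeds exactly as you do---blow up $V$ along $R$ so that all three schemes become Cartier divisors, verify the identity $\frac{d}{1+d}+\frac{1}{1+d}\cdot\frac{r}{1+d+r}=\frac{d+r}{1+d+r}$ directly, and push forward by birational invariance. Your handling of the compatibility $\pi_*(\alpha\otimes_{\Til V}\pi^*\cL)=(\pi_*\alpha)\otimes_V\cL$ is also right: since $\pi$ is birational, $\dim\Til V=\dim V$, so codimensions are preserved and the projection formula applies termwise.
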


This is~\cite[Proposition~9.2]{85k:14004}, written using the notation given above;
see~\cite[Proposition~3]{MR96d:14004}. An equivalent alternative formulation is
\begin{equation}\label{eq:ressc}
s(Z,V)=\left([D]+ c(\cO(-D))\cap s(R,V)\right)\otimes_V\cO(D)\saf.
\end{equation}

Along with definition~\eqref{eq:FMdef} and a blow-up construction, 
Proposition~\ref{prop:ressc} may be used to assign a contribution to intersections 
products due to residual schemes, with important applications; 
see~\cite[Chapter~9]{85k:14004}. In this article, the residual formula~\eqref{eq:ressc}
will have applications in the theory of characteristic classes for singular varietes,
cf.~especially~\S\ref{ss:CSMhyp}.

\subsection{Example: hyperplane arrangements}\label{ss:hyparr}
In the rest of this article we will focus on the relation between Segre classes and 
invariants of (possibly) singular spaces. Typically, we will extract information about
a variety $X$ by considering a Segre class of a scheme associated with the 
singular locus of $X$. In many cases we will deal with the case of
hypersurfaces of nonsingular varieties, so we formalize the following definition.

\begin{definition}\label{def:JX}
\index{singularity subscheme of a hypersurface}
Let $X$ be a hypersurface in a nonsingular variety $M$, defined by the vanishing of
a section $s$ of $\cO(X)$. Then the {\em singularity subscheme\/} $JX$ of $X$ is 
defined as the zero-scheme of the section $ds$ of $\Omega^1_M\otimes\cO(X)$
determined by $s$. We will denote by $\iota$ the embedding 
$JX\hookrightarrow X$ or $JX\hookrightarrow M$, as context will dictate.
\qede\end{definition}

Thus, if $z_1,\dots, z_n$ are local parameters for $M$ at a point $p$, and $f$ is a local 
equation of $X$, the ideal of $JX$ at $p$ as a subscheme of $M$ is the jacobian/Tyurina 
ideal
\[
\left(\frac{\partial f}{\partial z_1},\dots, \frac{\partial f}{\partial z_n}, f\right)\saf.
\]
In characteristic~$0$, if $M=\Pbb^n$ and $F(x_0,\dots, x_n)$ is a homogeneous polynomial 
defining a hypersurface $X$, then $JX$ is globally defined by the ideal
\[
\left(\frac{\partial F}{\partial x_0},\dots, \frac{\partial F}{\partial x_n}\right)
\]
(in characteristic~$0$, a homogeneous polynomial belongs to the ideal of its partials).

In order to illustrate the type of information encoded by this subscheme, we present the
case of hyperplane arrangements.
\index{hyperplane arrangement}
Let $\cA$ denote a hyperplane arrangement in 
(complex) projective space $\Pbb^n$,
consisting of $d$
(not necessarily distinct) hyperplanes, and consider the hypersurface $A$ given by 
the union of these hyperplanes. More precisely, let $L_i(x_0,\dots, x_n)$, $i=1,\dots, d$ 
be linear forms whose vanishing defines the hyperplanes; then the hypersurface $A$ is 
defined by the polynomial
\[
F(x_0,\dots, x_n):= \prod_{i=1}^d L_i(x_0,\dots, x_n)\saf.
\]
Max Wakefield and Masahiko Yoshinaga prove (\cite{MR2424913}) that an essential 
arrangement of 
distinct hyperplanes in $\Pbb^n$, $n\ge 2$, may be reconstructed from the 
corresponding 
singularity subscheme. The following result proves that the {\em ranks of the cohomology\/}
of the complement are determined by the Segre class of the singularity subscheme of
the arrangement.

\begin{theorem}\label{thm:Segarr}
For an arrangement $\cA$ of $d$ hyperplanes, define integers $\sigma_i$, $i=0,\dots,n$, 
such that
\[
[\Pbb^n]-\iota_* s(JA,\Pbb^n) = \sum_{i\ge 0} \sigma_i [\Pbb^{n-i}]\saf.
\]
Then
\begin{equation}\label{eq:rkHsc}
\rk H^k(\Pbb^n\smallsetminus A,\Qbb) = \sum_{i=0}^k \binom ki (d-1)^{k-i}\sigma_i
\end{equation}
for $k=0,\dots, n$.
\end{theorem}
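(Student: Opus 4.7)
The strategy is to combine the Segre-class expression for the Chern--Schwartz--MacPherson class of a hypersurface (Theorem~\ref{thm:CSMemb}) with the classical fact that the full Poincar\'e polynomial of a hyperplane-arrangement complement is an invariant of the intersection lattice (Orlik--Solomon), and hence is rigid enough to be recovered from the CSM class of the complement.

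First, by the additivity of the CSM class applied to the constructible decomposition $\Pbb^n = (\Pbb^n\smallsetminus A) \sqcup A$,
\[
\csm(\one_{\Pbb^n\smallsetminus A}) \;=\; c(T\Pbb^n)\cap [\Pbb^n] - \csm(\one_A).
\]
I would apply Theorem~\ref{thm:CSMemb} to the hypersurface $A$ to express $\csm(\one_A)$ as a universal polynomial expression in $c(T\Pbb^n)$, $\cO(A) = \cO(dH)$, and $\iota_* s(JA,\Pbb^n)$, manipulated via the $\otimes_{\Pbb^n}$ operation of \S\ref{ss:resanot}. Substituting $c(T\Pbb^n) = (1+H)^{n+1}$, writing $\iota_* s(JA,\Pbb^n) = \sum_i s_i\, [\Pbb^{n-i}]$ (so that $\sigma_0 = 1$ and $\sigma_i = -s_i$ for $i\ge 1$), and expanding, one obtains an explicit formula
\[
\csm(\one_{\Pbb^n\smallsetminus A}) = \sum_{k=0}^n \gamma_k\, [\Pbb^k]
\]
in which each $\gamma_k$ is a specific polynomial in $d$ and $\sigma_0,\ldots,\sigma_{n-k}$.

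Second, I would invoke the fact---specific to arrangement complements $U = \Pbb^n\smallsetminus A$---that $\csm(\one_U)$ determines the entire Poincar\'e polynomial $\pi_U(t) = \sum_k \rk H^k(U,\Qbb)\,t^k$, and not merely its value $\chi(U) = \pi_U(-1)$. The degrees of the components $\gamma_k$ compute the Euler characteristics of generic linear sections of $U$, and by induction on $n$, using the Orlik--Solomon description of $H^*(U,\Qbb)$ in terms of the M\"obius function of the intersection lattice, these Euler characteristics pin down the individual Betti numbers. Packaging the resulting relation into a generating-function identity with $\sigma(u) := \sum_i \sigma_i u^i$, one would verify
\[
\pi_U(t) \;\equiv\; \frac{1}{1-(d-1)t}\;\sigma\!\left(\frac{t}{1-(d-1)t}\right) \pmod{t^{n+1}}.
\]
Expanding $(1-(d-1)t)^{-(i+1)}$ as a negative-binomial series and reading off the coefficient of $t^k$ then yields precisely~\eqref{eq:rkHsc}.

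The main obstacle is the second step, bridging the gap between the intersection-theoretic datum $\csm(\one_U)$ (which in general only encodes $\chi(U)$) and the combinatorial Betti numbers of the arrangement complement. What makes this bridge possible is the rigidity of the cohomology of $U$: it is pure of Hodge--Tate type, with ranks governed by the intersection lattice, so that the Euler characteristics of all generic linear slices are enough to determine each $\rk H^k(U,\Qbb)$ individually. Once this rigidity is invoked, the remaining work is the formal verification of the generating-function identity above, which is a direct computation from the explicit form of $\gamma_k$ produced in the first step.
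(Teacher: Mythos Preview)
Your overall architecture matches the paper's: compute $\csm(\one_{\Pbb^n\smallsetminus A})$ from the Segre class $s(JA,\Pbb^n)$ via the hypersurface CSM formula, then translate this into the Poincar\'e polynomial, and finally read off~\eqref{eq:rkHsc}. Your target generating-function identity is exactly the paper's~\eqref{eq:interm2}, so you have correctly located the endpoint. Two comments, one minor and one substantive.

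\smallskip

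\textbf{Minor.} For the first step you cite Theorem~\ref{thm:CSMemb}, the general embedded formula. Since $A$ is a hypersurface, the relevant and much simpler statement is Theorem~\ref{thm:CSMpur2} (equivalently Theorem~\ref{thm:CSMSe}); this is what the paper actually uses in Example~\ref{ex:hyparrCSM}, and it gives the $\gamma_k$ directly without passing through an auxiliary projective bundle.

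\smallskip

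\textbf{Substantive.} The real content of the proof lies in your second step, and here your proposal has a gap. You want to pass from $\csm(\one_U)$ to the individual Betti numbers, and you propose doing this by (i) interpreting the components $\gamma_k$ as sectional Euler characteristics of $U$, and (ii) invoking purity/Orlik--Solomon to recover each $b_k(U)$ from these. Step~(i) is indeed a known fact, but step~(ii) is not the ``direct computation'' you suggest: you would need an explicit formula for $b_k(U)$ in terms of the $\chi(U\cap L_j)$, and deriving one requires exactly the combinatorial input you are trying to avoid. The argument as written asserts that such a formula exists and then asks to ``verify'' the generating-function identity against it, but you never produce the formula to verify against.

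The paper closes this gap differently and more directly: it proves Theorem~\ref{thm:ha1}, the statement that $\csm(\Pbb^n\smallsetminus A)$ is obtained from $\chi_\cA(t+1)$ by replacing $t^i$ with $[\Pbb^i]$. This is established by a short combinatorial argument (M\"obius inversion on the intersection lattice, or equivalently deletion--contraction for $\chi_\cA$ matched against inclusion--exclusion for CSM classes). Once Theorem~\ref{thm:ha1} is in hand, Orlik--Solomon identifies $\pi_\cA$ with the Betti-number generating function, and the passage to~\eqref{eq:interm2} and then to~\eqref{eq:rkHsc} really is a formal manipulation. Your sectional-Euler-characteristic route, if completed, would amount to reproving Theorem~\ref{thm:ha1} by a longer path; the missing piece is precisely that theorem.
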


This statement is given in~\cite[Theorem~5.1]{MR3047491}; we will sketch a proof
in~\S\ref{ss:CSMhyp} (see Example~\ref{ex:hyparrCSM}).
In fact, in~{\em loc.~cit.,\/} the result is stated for hyperplane arrangements consisting of distinct 
hyperplanes. Remarkably, this hypothesis is not needed: if any of the hyperplanes appear 
with a multiplicity, the effect on the Segre class of the singularity subscheme precisely 
compensates for these multiplicities.

\begin{example}
Consider the arrangement in $\Pbb^3$ consisting of the planes $x_1=0$, $x_2=0$, 
$x_3=0$. The corresponding hypersurface has equation $x_1 x_2 x_3=0$; the
singularity subscheme is defined by the ideal
\[
(x_1 x_2, x_1 x_3, x_2 x_3)\saf. 
\]
We have computed the corresponding Segre class in Example~\ref{ex:3lines}:
\[
\iota_* s(JA,\Pbb^3) = 3[\Pbb^1]-10[\Pbb^0]\saf.
\]
We have $d=3$ and $(\sigma_0,\dots,\sigma_3)=(1,0,-3,10)$, therefore 
Theorem~\ref{thm:Segarr} gives
\[
\rk H^k(\Pbb^3\smallsetminus A,\Qbb) =
\begin{cases}
2^0\cdot 1 = {\bf 1} & k=0\\
2^1\cdot 1+2^0\cdot 0 = {\bf 2} & k=1\\
2^2\cdot 1+2\cdot 2^1\cdot 0+2^0\cdot (-3)={\bf 1} & k=2\\
2^3\cdot 1+3\cdot 2^2\cdot 0+3\cdot 2^1\cdot (-3)+2^0\cdot 10={\bf 0} & k=3
\end{cases}
\]
as it should.

Now assume the same planes appear with multiplicities $2,3,5$ respectively.
The ideal of $A$ is generated by $x_1^2 x_2^3 x_3^5$, therefore $JA$ is
defined by the ideal
\[
\left(x_1 x_2^3 x_3^5, x_1^2 x_2^2 x_3^4, x_1^2 x_2^3 x_3^4\right)
\]
and the package {\tt SegreClasses} evaluates its Segre class as
\[
\iota_* s(JA,\Pbb^3) = 7[\Pbb^2] - 46[\Pbb^1] +270[\Pbb^0]\saf.
\]
In this case $d=10$ and $(\sigma_0,\dots,\sigma_3)=(1,-7,46,-270)$, therefore 
\[
\rk H^k(\Pbb^3\smallsetminus A,\Qbb) =
\begin{cases}
9^0\cdot 1 = {\bf 1} & k=0\\
9^1\cdot 1+9^0\cdot (-7) = {\bf 2} & k=1\\
9^2\cdot 1+2\cdot 9^1\cdot (-7)+9^0\cdot 46={\bf 1} & k=2\\
9^3\cdot 1+3\cdot 9^2\cdot (-7)+3\cdot 9^1\cdot 46+9^0\cdot (-270)={\bf 0} & k=3
\end{cases}
\]
according to Theorem~\ref{thm:Segarr}, with the same result since the support
of the arrangement is the same as in the previous case.
\qede\end{example}

In general, the fact that multiplicities do not affect the right-hand side 
of~\eqref{eq:rkHsc} is a consequence of the residual formula of 
Proposition~\ref{prop:ressc}, as the reader may enjoy verifying.

Note that the Segre class appearing in Theorem~\ref{thm:Segarr} is the Segre
class $s(JA,\Pbb^n)$ of the singularity subscheme {\em in the ambient space\/} 
$\Pbb^n$. The singularity subscheme~$JA$ is also contained in the hypersurface $A$.
It is natural to ask what type of information the Segre class $s(JA,A)$ may encode;
a full answer to this question will be given in~\S\ref{ss:CMa}. Here we point out
that, in the case of reduced arrangements (that is, if the hyperplanes are all
different), this Segre class is in fact {\em determined by the number $d$ of
hyperplanes.\/} 

\begin{proposition}\label{prop:Schyar}
Let $\cA$ be a reduced arrangement of $d$ hyperplanes in $\Pbb^n$, and let 
$\iota: JA\hookrightarrow \Pbb^n$ be the corresponding singularity subscheme. Then
\begin{equation}\label{eq:Segfha}
\iota_* s(JA,A) = d \sum_{i=2}^n (-1)^i (d-1)^{i-1} [\Pbb^{n-i}]\saf.
\end{equation}
\end{proposition}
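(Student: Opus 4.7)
My plan is to decompose the reducible hypersurface $A$ along its irreducible components and reduce to Segre classes of effective Cartier divisors on each $H_i$. Since $\cA$ is reduced, $A = H_1 \cup \cdots \cup H_d$ is the irreducible decomposition and each $H_i$ has geometric multiplicity $1$ in $A$; applying~\eqref{eq:segunio} gives
\[
\iota_* s(JA, A) = \sum_{i=1}^d \bar\iota_*\, s(JA \cap H_i,\, H_i),
\]
where $\bar\iota$ denotes the composition $JA \cap H_i \hookrightarrow H_i \hookrightarrow \Pbb^n$ and each intersection is taken scheme-theoretically.

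The essential step is the scheme-theoretic identification of $JA \cap H_i$ inside $H_i$. With $F = \prod_{j=1}^d L_j$, the product rule gives $\partial_k F = \sum_l (\partial_k L_l) \prod_{j \neq l} L_j$, and reducing modulo $L_i$ only the summand $l = i$ survives, so
\[
\partial_k F \equiv (\partial_k L_i)\,\prod_{j \neq i} L_j \pmod{L_i}.
\]
Since $L_i$ is a nonzero linear form, its coefficients $\partial_k L_i$ generate the unit ideal, and hence the ideal of $JA \cap H_i$ in $H_i$ is the principal ideal $\bigl(\prod_{j \neq i} L_j\big|_{H_i}\bigr)$. In particular, $JA \cap H_i$ is an effective Cartier divisor in $H_i$ of class $(d-1) H\big|_{H_i}$, regardless of any coincidences among the restrictions $L_j|_{H_i}$. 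This is the decisive observation: it is the only point where the specifics of the Jacobian ideal of a product of linear forms are used, and it explains why the answer depends only on $d$ rather than on any finer combinatorial data of $\cA$.

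Given this identification, Example~\ref{ex:Cardiv} gives $s(JA \cap H_i, H_i) = \frac{(d-1) H}{1 + (d-1) H}\cap [H_i]$, and pushing forward to $\Pbb^n$ via the projection formula (using $\iota_* [H_i] = H \cap [\Pbb^n]$) yields $\frac{(d-1) H^2}{1 + (d-1) H}\cap [\Pbb^n]$. Summing the $d$ identical contributions produces
\[
\iota_* s(JA, A) = \frac{d(d-1) H^2}{1 + (d-1) H}\cap [\Pbb^n],
\]
and expanding the geometric series in $H$ (truncating at $H^{n+1} = 0$) recovers the stated formula $d \sum_{i=2}^n (-1)^i (d-1)^{i-1} [\Pbb^{n-i}]$.
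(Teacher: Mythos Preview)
Your proof is correct and follows essentially the same approach as the paper: both decompose $s(JA,A)$ along the components $H_i$ via~\eqref{eq:segunio}, identify $JA\cap H_i$ scheme-theoretically as the Cartier divisor cut out by $\prod_{j\ne i}L_j|_{H_i}$ (using that the partials of a linear form are constants, not all zero), and then apply the Segre class formula for Cartier divisors. Your remark that coincidences among the $L_j|_{H_i}$ are irrelevant is a nice addition, and your closed form $\frac{d(d-1)H^2}{1+(d-1)H}\cap[\Pbb^n]$ is a slightly cleaner way to package the final answer.
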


\begin{proof}
Let $H_1,\dots, H_d$ be the hyperplanes of the arrangement, and let 
$L_k(x_0,\dots, x_n)$ be a generator of the homogeneous ideal of $H_k$.
By~\eqref{eq:segunio}, 
\begin{equation}\label{eq:segunioha}
s(JA,A) = \sum_k s(JA\cap H_k,H_k)\saf.
\end{equation}
The ideal of $JA\cap H_k$ is given by
\[
\left(\sum_{j=1}^d \prod_{\ell\ne j} L_\ell\frac{\partial L_j}{\partial x_i}, L_k\right)_{i=0,\dots, n}
=\left(\prod_{\ell\ne k} L_\ell \frac{\partial L_k}{\partial x_i}, L_k\right)_{i=0, \dots, n}
\]
and this is the ideal
\[
\left(\prod_{\ell\ne k} L_\ell, L_k\right)
\]
since at least one of the derivatives of $L_k$ is nonzero. 

It follows that $JA\cap H_k$ is the subscheme of $H_k$ traced by the union of the
other hyperplanes; that is, it is a Cartier divisor of class $(d-1)H$ in $H_k$,
where $H$ denotes the hyperplane class. Therefore
\[
\iota_* s(JA\cap H_k,H_k) = (d-1) [\Pbb^{n-2}] -(d-1)^2 [\Pbb^{n-3}]+ (d-1)^3 [\Pbb^{n-4}]
+\cdots
\]
and the statement follows from~\eqref{eq:segunioha}.
\end{proof}

Therefore, while the Segre class $s(JA,\Pbb^n)$ detects nontrivial combinatorial
information about the arrangement (as Theorem~\ref{thm:Segarr} shows), the
Segre class $s(JA,A)$ is blind to any information but the degree of the arrangement
(assuming that the arrangement is reduced).

In particular, note that $s(JA,\Pbb^n)$ is {\em not\/} determined by $s(JA,A)$; we
will come back to this point in~\S\ref{ss:CF}, Example~\ref{ex:unf}.

In~\S\ref{sec:Charcla} we will learn that if $X$ is a hypersurface in a nonsingular variety,
then the two classes $s(JX,X)$ and $s(JX,M)$ are closely related to different `characteristic
classes' for $X$, and this will provide a further explanation for the behavior observed in 
this example (see Examples~\ref{ex:hyparr2} and~\ref{ex:hyparrCSM}).


\section{Numerical invariants}\label{sec:numinvs}

\subsection{Multiplicity}
\index{multiplicity}
The most basic numerical invariant of a singularity is its {\em multiplicity.\/}
Let $X$ be a hypersurface of $\Abb^n$, and let $p$ be the origin. Write the
equation $F$ of $X$ as a sum of homogeneous terms:
\[
F(x_1,\dots, x_n)=\sum_{i\ge 0} F_i(x_1,\dots, x_n)
\]
with $F_i(x_1,\dots, x_n)$ homogeneous of degree $i$. By definition,
the multiplicity $m_pX$ of $X$ at $p$ is the smallest $m$ such that 
$F_m(x_1,\dots, x_n)\ne 0$. Thus, $p\in X$ if and only if $m_pX\ge 1$.
The `initial' homogeneous polynomial $F_{m_pX}$ defines the
{\em tangent cone\/}\index{cone!tangent}
to~$X$ at $p$; therefore, $m_pX$ is the degree of the
tangent cone to $X$ at $p$.

There is a natural identification of the tangent cone to $X$ at $p$ defined
in the previous paragraph with the 
{\em normal cone\/}\index{cone!normal} 
$C_pX$ introduced
in~\S\ref{ss:Segredefs}:
\begin{equation}\label{eq:defTC}
C_pX = \Spec (\oplus_{k\ge 0} \mathfrak m^k/\mathfrak m^{k+1})
\end{equation}
where $\mathfrak m$ is the maximal ideal in the local ring of $X$ at $p$. 
We can projectivize this cone, or rather consider the projective completion
$\pi\colon \Pbo(C_pX\oplus \one) \to p$
(cf.~\eqref{eq:projcom}; this accounts for the possibility $X=p$, see
Remark~\ref{rem:addfac}), and 
observe that the degree $m_pX$ of this projective cone satisfies
\[
(m_p X) [p] = \pi_* \left(\sum_{i\ge 0} c_1(\cO(1))^i \cap [\Pbo(C_pX\oplus \one)]
\right)
\]
(cf.~\eqref{eq:toysegre}). In other words, we have verified that the multiplicity of
$X$ at $p$ is precisely the information carried by the Segre class of $p$ in $X$:
\begin{equation}\label{eq:muldef}
s(p,X) = (m_p X) [p]\saf.
\end{equation}

Of course these considerations are not limited to the case in which $X$ is an
affine hypersurface. The tangent cone to a point $p$ of any scheme $X$ is
defined to be the normal cone $C_pX$, that is, the spectrum of the corresponding 
associated graded ring, as in~\eqref{eq:defTC}. A standard computation shows
that if $U=\Abb^N$ is an affine space centered at $p$, and $X\cap U$ is defined
by an ideal $I$, then the ideal defining $\oplus_{k\ge 0} \mathfrak m^k/\mathfrak m^{k+1}$
is generated by the initial forms of the polynomials in $I$; so this is indeed a straightforward
generalization of the situation for hypersurfaces. We can {\em define\/} $m_p X$ to
be the degree of the projective completion $\Pbo(C_pX\oplus \one)$; and 
then~\eqref{eq:muldef} holds in 
this generality. To avoid certain pathologies, it is common to assume that $X$
be pure-dimensional.
For example, this hypothesis implies that the multiplicity of $X$ at $p$ equals the 
sum of the multiplicities of its irreducible components, by~\eqref{eq:segunio}.

The degree of the projective completion of $C_pX$ can also be computed 
by means of the Hilbert function defined for all integers $t>0$ by 
\begin{equation}\label{eq:hilse}
\mathfrak h(t):=\dim_k (\oplus_{i=0}^{t-1} \mathfrak m^i/\mathfrak m^{i+1})\saf\colon
\end{equation}
for $t\gg 0$, $\mathfrak h(t)$ agrees with
a polynomial with leading term $(m_pX) \frac{t^d}{d!}$, where
$d$ is the dimension of $X$. 

More generally, we can consider a subvariety $V$ of a 
(pure-dimensional) scheme~$X$. Samuel (\cite{MR0048103}) defines the multiplicity 
$m_VX$ of the local ring $\cO_{X,V}$ in terms of the leading term of~\eqref{eq:hilse}, 
where now $\mathfrak m$ is taken to be the maximal ideal of $\cO_{X,V}$. 
This amounts to taking the fiberwise degree of the projective completion 
$\Pbo(C_VX\oplus \one)\to V$ of the normal cone $C_VX$,
hence it determines the dominant term of the Segre class:
\begin{equation}\label{eq:multdeffs}
s(V,X)=(m_V X) [V] + \text{lower dimensional terms}\saf.
\end{equation}

\begin{example}
Let $V$ be a proper subvariety of codimension~$d$ of a variety $X$, and let 
$\pi: E\to V$ be the exceptional divisor in the blow-up $B\ell_VX$. Then
\[
\pi_*(E^{d-1})=(-1)^d (m_VX) [V]\saf.
\]
Indeed, this is the dominant term of the Segre class $s(V,X)$ by~\eqref{eq:SegfromE}.
\qede\end{example}

Summarizing, we can take~\eqref{eq:multdeffs} as the {\em definition\/} of multiplicity of
a variety along a subvariety, and this agrees with Samuel's algebraic notion of
multiplicity. The agreement can be extended by the additivity formula~\eqref{eq:segunio}
to arbitrary pure-dimensional schemes $X$. It can also be extended to the case in
which $V$ is an irreducible component of a subscheme $Z$ of $X$, leading to the 
following interpretation of Samuel's multiplicity.

\begin{definition}
\index{multiplicity!of a scheme along a subvariety}
The multiplicity of a pure-dimensional scheme $X$ along a subscheme $Z$ at
an irreducible component $V$ of $Z$ is the coefficient of $[V]$ in $s(Z,X)$.
\end{definition}

\begin{example}\label{ex:nons1}
If $X$ is nonsingular and $Z$ is reduced, then the multiplicity of $X$ along~$Z$ is
$1$ at every component of $Z$. For instance, each line in Example~\ref{ex:3lines}
appears with multiplicity $1$ in the Segre class $s(J,\Pbb^3)$, and this is the
reason why the dominant term in~\eqref{eq:3linesc} equals $3[\Pbb^1]$.
\qede\end{example}

\begin{example}\label{ex:gmSm}
If $Z$ is (locally) a complete intersection in $X$, and its support 
$V$ is irreducible, then
\[
s(Z,X) = m [V] + \text{lower dimensional terms}
\]
where $m$ is the {\em geometric 
multiplicity\/}\index{multiplicity!geometric} 
of $V$ in $Z$,
that is, the length of the local ring~$\cO_{Z,V}$. Indeed, in this case the Segre 
class is the inverse Chern class of the normal bundle (\S\ref{ss:Segpro}):
$s(Z,X) = c(N_ZX)^{-1}\cap [Z]=[Z]+\cdots$, and $[Z]=m [V]$ (\cite[\S1.5]{85k:14004}).
So the multiplicity of $X$ along $Z$ at $V$ equals the geometric multiplicity of $V$ 
in $Z$ for complete intersections.

This is not true in general, even if $X$ is nonsingular. 
For example, let $Z$ be the `triple point' defined by the ideal $(x^2,xy,y^2)$ in the plane.
Then $s(Z,\Abb^2)=4[p]$, where $V=p$ is the origin, while the geometric multiplicity is $3$.
Indeed, let $Z'$ be the scheme defined by $(x^2,y^2)$. Then $s(Z',\Abb^2)=4[p]$,
since $Z'$ is a complete intersection, and $s(Z,\Abb^2)=s(Z',\Abb^2)$ since
$(x^2, xy, y^2)$ is the integral closure of $(x^2,y^2)$ (see~\S\ref{ss:Segpro}).
\qede\end{example}

\begin{example}\label{ex:discri1}
Let $D$ be the 
{\em discriminant\/}\index{discriminant!multiplicity} 
of a line bundle $\cL$ on a nonsingular complete
variety~$M$, i.e., the subset of $\Pbb H^0(M,\cL)$ parametrizing singular sections of 
$\cL$. For $X\in D$, consider the integer
\begin{equation}\label{eq:muldis}
m_XD = \int c(\cL) c(T^\vee M\otimes \cL)\cap s(JX,M)
\end{equation}
where $JX$ is the singularity subscheme of $X$ (Definition~\ref{def:JX})
and $T^\vee M$ is the cotangent bundle of $M$.
Under reasonable hypotheses, if $D$ is a hypersurface, then $m_XD\ne 0$ and in this 
case $m_XD$ is the multiplicity of $D$ at $X$, as the notation suggests. 
(See~\cite{MR94d:14047} for the precise 
statement of a more general result. A different formula not using Segre classes may be
found in~\cite{MR1141011}.) 

To see this, one can realize the discriminant $D$ as the image of the correspondence
\[
\hat D:=\{(p,X)\in M\times \Pbb(M,\cL)\,|\, p\in \Sing(X)\}\saf.
\]
The fiber of $X$ in this correspondence is (isomorphic to) $JX$, and $\hat D$ maps 
birationally to $D$ under mild hypotheses. The birational invariance of Segre classes 
implies then that $s(X,D)$ is the push-forward of $s(JX,\hat D)$, and the latter is computed
by making use of Theorem~\ref{thm:Fulton}, which we will discuss later.

For instance, let $X$ consist of a $d$-fold hyperplane in $M=\Pbb^n$. Then $JX$
is a $(d-1)$-fold hyperplane, and consequently
\[
s(JX,\Pbb^n) = (1+(d-1)H)^{-1}\cap (d-1)[\Pbb^{n-1}]\saf,
\]
where $H$ denotes the hyperplane class. (Example~\ref{ex:Cardiv}.)
View $X$ as a point of the discriminant $D$ of $\cO(X)$ over $\Pbb^n$.
Then according
to~\eqref{eq:muldis} the multiplicity of $D$ at~$X$~is
\begin{align*}
m_DX &=\int (1+dH)\,\frac{(1+(d-1)H)^{n+1}}{1+dH} \cap s(JD,\Pbb^n) \\
&= \int (1+(d-1)H)^n \cap (d-1) [\Pbb^{n-1}] \\
&= n(d-1)^n\saf.
\end{align*}

At the opposite extreme, assume that $X$ has isolated singularities. Then
we will verify that $m_DX$ equals the sum of their 
{\em Milnor\index{Milnor!number}
numbers,\/}\index{multiplicity!of discriminants and Milnor numbers} 
see~\S\ref{ss:Milnor}.
\qede\end{example}

Several more refined notions of `multiplicity' may be defined by means of 
Segre classes; see~\cite{MR1282823} and \cite{MR3912658} for two particularly
well-developed instances.

\subsection{Local Euler obstruction}\label{ss:locEuob}
\index{local Euler obstruction}
The {\em local Euler obstruction\/} $\Eu_X(p)$ of a possibly singular variety $X$ 
(or more generally a reduced pure-dimensional scheme)
at a point $p\in X$ is another numerical invariant, in some ways analogous to the 
multiplicity; indeed, if $X$ is a curve, then $\Eu_X(p)$ {\em equals\/} the multiplicity
$m_pX$. We first summarize the original transcendental definition, due to
MacPherson (\cite[\S3]{MR0361141}). 

We will assume that $X$ is a subvariety of a nonsingular variety $M$. If $X$ has
dimension~$n$, there is a rational map
\[
X \dashrightarrow \Gr_n(TM)|_X
\]
associating with each nonsingular $x\in X$ the tangent space $T_xX\subseteq T_xM$,
viewed as a point in the Grassmann bundle $\Gr_n(TM)$. The closure of the image
of this rational map is the 
{\em Nash\index{Nash blow-up} 
blow-up\/} $\hat X$ of $X$; it comes equipped
with 
\begin{itemize}
\item a proper birational map $\nu: \hat X \to X$; and
\item a rank-$n$ vector bundle $\hat T$, the pull-back of the tautological subbundle on $\Gr_n(TM)$.
\end{itemize}
Over the nonsingular part $X^\circ$ of $X$, $\nu$ is an isomorphism and $\hat T$ 
agrees with the pull-back of~$TX^\circ$. Thus, the Nash blow-up is a modification of $X$ 
that admits a natural vector bundle~$\hat T$ restricting to $TX^\circ$ on the nonsingular part
of $X$. The fiber of $\hat X$ over a point $x\in X$ parametrizes `limits' of tangent
spaces to $X^\circ$ as one approaches $x$. At a point $\hat x\in \nu^{-1}(x)$,
the fiber of $\hat T$ over $\hat x$ is just this limit tangent space.

The Nash blow-up and the tautological bundle $\hat T$ are independent of the chosen
embedding of $X$ in a nonsingular variety.

Let $p\in X$. As we will work in a neighborhood of $p$, we may assume that $X$ is
affine, $M=\Abb^m$, and $p$ is the origin. Over $\Cbb$, MacPherson considers 
the differential form $d ||z||^2$, a section of the real dual bundle $TM^*$.
By construction $\hat T$ is a subbundle of $\nu^*(TM)$; we denote by $r$ the
pull-back of this form to the real dual~$\hat T^*$. 

Next, consider the ball $B_\epsilon$ and the sphere $S_\epsilon$ of radius $\epsilon$
centered at $p$.
For small enough $\epsilon$, $r$ is nonzero over $\nu^{-1}(z)$, $0< ||z||\le \epsilon$
(\cite[Lemma~1]{MR0361141}). By definition, {\em the local Euler obstruction 
$\Eu_X(p)$ is the obstruction to extending $r$ as a nonzero section of 
$\hat T^*$ from $\nu^{-1}(S_\epsilon)$ to $\nu^{-1}(B_\epsilon)$.\/}

For curves, the local Euler obstruction equals the multiplicity. The local Euler obstruction
of a cone over a plane curve of degree~$d$ at the vertex equals $2d-d^2$ 
(\cite[p.~426]{MR0361141}). In particular, note that (unlike the multiplicity) $\Eu_X(p)$ 
may be negative.

The following algebraic alternative to the transcendental definition is due to 
G.~Gonzalez-Sprinberg\index{local Euler obstruction!Gonzalez-Sprinberg-Verdier formula}
and J.-L.~Verdier.

\begin{theorem}[\cite{GS}]\label{thm:GS}
With notation as above,
\begin{equation}\label{eq:GSV}
\Eu_X(p) = \int c(\hat T|_{\nu^{-1}(p)})\cap s(\nu^{-1}(p), \hat X)\saf.
\end{equation}
\end{theorem}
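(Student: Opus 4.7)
The plan is to reinterpret the obstruction-theoretic definition of $\Eu_X(p)$ as a localized top-Chern-class contribution on the Nash blow-up $\hat X$, and then evaluate that contribution by means of the Fulton--MacPherson formula of Example~\ref{ex:loctcc}. Since $\hat T$ has complex rank $n=\dim X=\dim\hat X$, the codimension constraint in~\eqref{eq:contrize} is $\dim W-\rk E=0$, so the expected output is indeed an integer, matching the left-hand side of~\eqref{eq:GSV}.

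First, I would recall that for a complex vector bundle $E$ of rank $r$ on a complex variety, the Euler class of the underlying oriented real bundle agrees with $c_r(E)$, and similarly for its real dual (up to the sign $(-1)^r$, absorbed in the convention of working with sections of $\hat T^*$ rather than $\hat T$). By standard obstruction theory, the obstruction to extending a nowhere-vanishing section of $\hat T^*$ from $\nu^{-1}(S_\epsilon)$ across $\nu^{-1}(\overline{B_\epsilon})$ equals the degree of $c_n(\hat T)$ on the relative fundamental class of the pair, localized at the zero-set of any extension of $r=\nu^*(d\|z\|^2)$. MacPherson's lemma guarantees that this zero-set is contained in $\nu^{-1}(p)$. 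Applying~\eqref{eq:contrize} with $E=\hat T$, $W=\hat X$, and $Z=\nu^{-1}(p)$, the contribution of $\nu^{-1}(p)$ is
\[
\Bigl\{c\bigl(\hat T|_{\nu^{-1}(p)}\bigr)\cap s\bigl(\nu^{-1}(p),\hat X\bigr)\Bigr\}_{0},
\]
whose degree is precisely the right-hand side of~\eqref{eq:GSV}.

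The main obstacle is bridging the gap between the smooth, non-holomorphic section $r$ and the algebraic zero-scheme structure presupposed by~\eqref{eq:contrize}. A natural route is a deformation argument: perturb $r$ to a nearby generic holomorphic section of $\hat T^*$ on a neighborhood of $\nu^{-1}(p)$, verify that the perturbation does not alter the obstruction (since it can be taken to leave the section essentially unchanged on the boundary $\nu^{-1}(S_\epsilon)$), and invoke~\eqref{eq:contrize} for the algebraic zero-scheme of the perturbed section. One then needs to check that the resulting contribution depends only on $s(\nu^{-1}(p),\hat X)$ and on $c(\hat T|_{\nu^{-1}(p)})$, independently of the chosen extension; this intrinsicness is guaranteed by the Fulton--MacPherson machinery and by the invariance properties of Segre classes reviewed in~\S\ref{ss:Segpro}. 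Identifying the transcendental obstruction with this algebraic localized contribution is the technical heart of the argument of Gonzalez-Sprinberg and Verdier, and it is this translation --- rather than either side in isolation --- that is the subtle step.
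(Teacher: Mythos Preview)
Your overall strategy matches the paper's sketch: interpret $\Eu_X(p)$ as the localized contribution of $\nu^{-1}(p)$ to $c_n(\hat T)\cap[\hat X]$, then read off that contribution via~\eqref{eq:contrize}. You also correctly identify the crux, namely that the section $r$ (or any hermitian projection of the radial section) is not algebraic, so~\eqref{eq:contrize} does not apply to it directly.

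Where you diverge from the paper (and from Gonzalez-Sprinberg--Verdier) is in the proposed fix. You suggest perturbing $r$ to a nearby holomorphic section and then arguing that the localized contribution is unchanged. This is shaky: a holomorphic perturbation will generically have a zero-scheme \emph{different} from $\nu^{-1}(p)$, so the Segre class entering~\eqref{eq:contrize} is a priori $s(Z',\hat X)$ for some other $Z'$, and you would need a separate argument to identify this with $s(\nu^{-1}(p),\hat X)$; your appeal to ``invariance properties of Segre classes'' does not supply this. The paper instead reports the actual GS maneuver: replace $r$ by a section $\sigma_s$ of $\hat T$ (projecting the radial section via a hermitian form), then pass to a variety dominating $\hat X$ on which the pull-back of $\sigma_s$ \emph{is} algebraic, apply~\eqref{eq:contrize} there, and descend by the projection formula. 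This sidesteps the perturbation issue entirely, since no change of section is needed---only a change of ambient variety, which the birational invariance of Segre classes handles cleanly.
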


The proof of this equality is quite delicate. The section $r$ may be replaced with 
a section $\sigma_s$ of $\hat T$ obtained by projecting the `radial' section of 
$\nu^* T\Abb^m$ by means of a hermitian form $s$. Viewing $\nu^{-1}(p)$
as a union of components of the zero-scheme of this section, the local Euler 
obstruction is then interpreted as its contribution of $\nu^{-1}(p)$ to the intersection
product of $\sigma_s(\hat X)$ with the zero-section of $\hat T$, that is, the 
localized contribution to the degree of the top Chern class 
$c_{\dim X}(\hat T)\cap [\hat X]$. This 
gives~\eqref{eq:GSV} as we have seen in Example~\ref{ex:loctcc},
particularly~\eqref{eq:contrize}. The main problem with this sketch is that
the section $\sigma_s$ is {\em not\/} algebraic. This is handled in~\cite{GS} by 
applying this argument to a variety dominating $\hat X$ and such that the pull-back 
of $\sigma_s$ {\em is\/} algebraic; \eqref{eq:GSV} then follows by the projection 
formula. 

Theorem~\ref{thm:GS} yields an interpretation of the local Euler obstruction that does
not depend on complex geometry, so may be adopted over arbitrary fields. The use
of the Nash blow-up is not necessary: any proper birational map $\nu: \hat X \to X$
such that $\nu^* \Omega^1_X$ surjects onto a locally free sheaf $\hat\Omega$ of rank
$n=\dim X$ will do, with $\hat T = \hat\Omega^\vee$. (This follows from the birational
invariance of Segre classes; see~\cite[Example~4.2.9]{85k:14004}.)

Claude Sabbah (\cite{MR804052}) recasts the algebraic definition of $\Eu_X(p)$
it in terms of the 
{\em conormal\index{conormal!space} 
space\/} of $X$. Recall that if $W\subsetneq M$ is an
embedding of nonsingular varieties, then the 
{\em conormal\index{conormal!bundle} 
bundle\/} $N^\vee_WM$ of $W$ in $M$ is the kernel of the natural morphism of 
cotangent bundles $T^\vee M|_W \to T^\vee W$:
\[
\xymatrix{
0 \ar[r] & N^\vee_WM \ar[r] & T^\vee M|_W \ar[r] & T^\vee W \ar[r] & 0\saf.
}
\]
The conormal {\em space\/} $N^\vee_XM$ of a possibly singular subvariety 
$X$ of $M$ is the closure of the conormal bundle of its nonsingular part
$X^\circ$:
\[
N^\vee_XM := \overline{N^\vee_{X^\circ} M}\saf.
\]
The projectivization 
$\Pbo(N^\vee_XM)\subseteq \Pbo(T^\vee M|_X)$ is equipped with
\begin{itemize}
\item a morphism $\kappa: \Pbo(N^\vee_XM)\to X$; and,
letting $m=\dim M$,
\item a rank-$(m-1)$ vector bundle $\overline T$, the pull-back of the 
tautological subbundle on $\Pbo(T^\vee M|_X)=\Gr_{m-1}(TM|_X)$.
\end{itemize}

\begin{proposition}[{\cite[Lemma~2]{MR1063344}}]\label{prop:Kennedy}
\begin{equation}\label{eq:Kenthm}
\Eu_X(p) =(-1)^{m-n-1} \int c(\overline T|_{\kappa^{-1}(p)})\cap 
s(\kappa^{-1}(p), \Pbo(N^\vee_XM))\saf.
\end{equation}
\end{proposition}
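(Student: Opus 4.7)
The plan is to relate formula~\eqref{eq:Kenthm} to the Gonzalez-Sprinberg-Verdier formula~\eqref{eq:GSV} via a natural flag-type correspondence between $\hat X$ and $\Pbo(N^\vee_XM)$. On the Nash blowup the rank-$n$ subbundle $\hat T \subseteq \nu^*TM$ has quotient $\hat Q := \nu^*TM/\hat T$ of rank $m-n$, and I set
\[
\widehat{NX} := \Pbo(\hat Q^\vee),
\]
a projective bundle $p_1:\widehat{NX}\to \hat X$ with fiber $\Pbb^{m-n-1}$. A point of $\widehat{NX}$ over $\hat x\in \hat X$ parametrizes a hyperplane $H\subseteq T_{\nu(\hat x)}M$ containing $\hat T_{\hat x}$; forgetting $\hat T_{\hat x}$ gives a morphism to $\Pbo(T^\vee M|_X)$. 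Over $X^\circ$ this identifies $\widehat{NX}$ with $\Pbo(N^\vee_{X^\circ}M)$; since $\widehat{NX}$ is irreducible and $\Pbo(N^\vee_XM)$ is by definition the closure of $\Pbo(N^\vee_{X^\circ}M)$, the image is $\Pbo(N^\vee_XM)$ and we obtain a proper birational morphism $p_2:\widehat{NX}\to \Pbo(N^\vee_XM)$. By construction $p_1^*\hat T\subseteq p_2^*\overline T$, with quotient $\cN$ of rank $m-n-1$ (at a flag point $(V,H)$, $\cN$ is $H/V$); under the identification $\widehat{NX} = \Pbo(\hat Q^\vee)$ this $\cN$ is the dual of the universal quotient, so $c(\cN) = p_1^*c(\hat Q)/(1+\xi)$ where $\xi := c_1(\cO_{\hat Q^\vee}(1))$.

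Writing $W := \nu^{-1}(p)$ and $\widetilde W := p_1^{-1}(W) = p_2^{-1}(\kappa^{-1}(p))$, the argument I have in mind proceeds in three reductions. First, birational invariance of Segre classes along $p_2$ (Proposition~\ref{prop:flapro}) combined with the projection formula for $\overline T$ identifies the right-hand side of~\eqref{eq:Kenthm} with $(-1)^{m-n-1}\int_{\widetilde W} c(p_2^*\overline T|_{\widetilde W})\cap s(\widetilde W,\widehat{NX})$. Second, flat pull-back along the projective bundle $p_1$ (Proposition~\ref{prop:flapro} again) gives $s(\widetilde W,\widehat{NX}) = p_1^* s(W,\hat X)$. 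Third, the Whitney formula $c(p_2^*\overline T) = c(p_1^*\hat T)\, c(\cN)$ and the projection formula along $p_1$ recast the integral as
\[
(-1)^{m-n-1}\int_W c(\hat T|_W)\cap (p_1)_*\bigl(c(\cN|_{\widetilde W})\cap p_1^* s(W,\hat X)\bigr).
\]

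The main computation, and the main obstacle, is evaluating the operator $(p_1)_*\bigl(c(\cN)\cap p_1^*(-)\bigr)$. Expanding $c(\cN) = p_1^* c(\hat Q)\cdot\sum_{i\ge 0}(-\xi)^i$, applying the standard projective-bundle pushforward $(p_1)_*(\xi^i\cap p_1^*\alpha) = s_{i-(m-n-1)}(\hat Q^\vee)\cap\alpha$, and invoking $s_j(E^\vee) = (-1)^j s_j(E)$ together with $c(\hat Q)s(\hat Q) = 1$, the index shift produces exactly one sign $(-1)^{m-n-1}$ and the remaining series telescopes to the identity, so that
\[
(p_1)_*\bigl(c(\cN)\cap p_1^*\alpha\bigr) = (-1)^{m-n-1}\,\alpha.
\]
The two signs $(-1)^{m-n-1}$ then cancel, and the right-hand side of~\eqref{eq:Kenthm} becomes $\int c(\hat T|_{\nu^{-1}(p)})\cap s(\nu^{-1}(p),\hat X) = \Eu_X(p)$ by Theorem~\ref{thm:GS}. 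The delicate point is the bookkeeping: $\overline T$ sits inside $\kappa^*TM$ coming from the projectivized cotangent bundle, while $\widehat{NX}$ is naturally built from a quotient of $\nu^*TM$, so the various dualizations must be tracked carefully to ensure that the sign $(-1)^{m-n-1}$ emerges exactly once from the pushforward and matches precisely the sign in the statement.
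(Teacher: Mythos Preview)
Your argument is correct and follows essentially the same approach as the paper's: the paper establishes the result as a corollary of Theorem~\ref{thm:GS} via a commutative diagram involving the unique component $J$ of the fiber product $\hat X\times_X \Pbo(N^\vee_XM)$ dominating $X$, referring to~\cite{MR1063344} for details. Your $\widehat{NX}=\Pbo(\hat Q^\vee)$ maps properly and birationally onto this $J$, so your correspondence is the same one; you have simply supplied the explicit computation (the projective-bundle pushforward producing the sign $(-1)^{m-n-1}$ via $c(\hat Q)s(\hat Q)=1$) that the paper omits.
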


This result may be established as a corollary of Theorem~\ref{thm:GS}, by means
of a commutative diagram
\[
\xymatrix{
J \ar[r] \ar[d] & \Pbo(N^\vee_XM) \ar[d]^\kappa \\
\hat X \ar[r]_\nu & X
}
\]
where $J$ is the unique component of the fiber product dominating $X$; see
\cite{MR1063344} for details.

\begin{example}\label{ex:discri2}
\index{discriminant!local Euler obstruction}
Again let $D$ be the discriminant of a line bundle $\cL$ on a non singular complete
variety (Example~\ref{ex:discri1}), and let $X\in D$ be a singular section of $\cL$.
Then under mild hypotheses (implying that $D$ is a hypersurface) we have
\begin{equation}\label{eq:Eudis}
\Eu_D(X)=\int c(T^\vee M\otimes \cL)\cap s(JX,M)
\end{equation}
(\cite[Theorem~3]{MR1641591}). Indeed, one can verify that the correspondence 
$\hat D$ mentioned in~Example~\ref{ex:discri1} is the Nash blow-up of $D$, and
$JX$ is isomorphic to the fiber of the point $X\in D$ under $\hat D\to D$. 
Then~\eqref{eq:Eudis} follows from
the Gonzalez-Sprinberg--Verdier formula~\eqref{eq:GSV}, after manipulations
expressing $s(JX,\hat D)$ in terms of $s(JX,M)$ and a computation of the
Chern class of the tautological bundle.

For a concrete instance, consider (as in Example~\ref{ex:discri1}) the case of a 
$d$-fold hyperplane in $\Pbb^n$. According to~\eqref{eq:Eudis}, the local Euler
obstruction of the discriminant of $\cO(X)$ at the corresponding point is
\begin{align*}
\Eu_D(X) &=\int \frac{(1+(d-1)H)^n}{1+dH} \cap (d-1) [\Pbb^{n-1}] \\
&= (d-1)\cdot\frac{(d-1)^n-1}{d}\saf.
\end{align*}

The reader should compare the formulas for the multiplicity of a discriminant at
a singular hypersurface $X$, \eqref{eq:muldis}, and for the local Euler obstruction
at $X$, \eqref{eq:Eudis}. We do not know if the similarity between these formulas
can be extended to more general cases, e.g., discriminants of complete intersections. 
\qede\end{example}

A classical result of L\^e D{\~u}ng Tr{\'a}ng and Bernard Teissier expresses the local 
Euler obstruction as an alternating sum of multiplicities of polar varieties, 
\cite[Corollaire~5.1.2]{MR634426}.

\subsection{Milnor number}\label{ss:Milnor}
\index{Milnor!number}
Segre classes provide a natural algebraic framework to treat Milnor numbers.
Here we work over~$\Cbb$; the formulas we will obtain could be taken as alternative
algebraic definitions extending the notions to arbitrary algebraically closed fields 
of characteristic~$0$.

Let $X$ be a hypersurface in a nonsingular variety $M$, and let $p$ be an isolated
singularity of $X$. Again consider the singularity subscheme $JX$ of~$X$, 
Definition~\ref{def:JX}. In this case $p$ is the support of one component of $JX$,
which we denote $\hat p$. As a subscheme {\em of $M$,\/} the ideal of $\hat p$ at $p$ is
\[
\left(\frac{\partial f}{\partial z_1},\dots, \frac{\partial f}{\partial z_n}, f\right)
\]
where the ideal of $X$ is locally generated by $f$ and $z_1,\dots, z_n$ are local
parameters for $M$ at $p$.

\begin{proposition}
The Milnor number $\mu_X(p)$ of $X$ at $p$ equals the coefficient of $p$ in $s(\hat p,M)$:
\[
s(\hat p, M) = \mu_X(p) [p]\saf.
\]
\end{proposition}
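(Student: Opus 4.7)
The plan is to replace $\hat p$ by a nearby complete intersection by means of integral closure of ideals. Since $\hat p$ is zero-dimensional and supported at~$p$, we know $s(\hat p,M)=c\,[p]$ for some $c\in\Zbb$, and the task is to identify $c$ with $\mu_X(p)$. I would work locally: choose local parameters $z_1,\dots,z_n$ for $M$ at~$p$, let $f$ be a local equation for~$X$, and consider the auxiliary subscheme $\hat p\,'$ defined near $p$ by the Jacobian ideal $J:=(\partial f/\partial z_1,\dots,\partial f/\partial z_n)$, obtained from the Tyurina ideal $J^T:=J+(f)$ defining $\hat p$ by omitting~$f$. Because $p$ is an isolated singularity of $X$, we have $V(J)=\{p\}$ in a neighborhood of~$p$, so $J$ is $\mathfrak m_p$-primary. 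As its $n$ generators form a system of parameters in the $n$-dimensional regular local ring $\cO_{M,p}$, they form a regular sequence, and $\hat p\,'$ is a local complete intersection of codimension~$n$. Applying Example~\ref{ex:gmSm},
\[
s(\hat p\,', M) \;=\; [\hat p\,'] \;=\; \dim_{\Cbb}\!\bigl(\cO_{M,p}/J\bigr)\,[p] \;=\; \mu_X(p)\,[p],
\]
by the very definition of the Milnor number.

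It remains to show that $s(\hat p, M)=s(\hat p\,', M)$. By the integral-closure invariance of Segre classes recorded at the end of~\S\ref{ss:Segpro}, it suffices to check that $J$ and $J^T$ have the same integral closure in $\cO_{M,p}$; since $J\subseteq J^T$, this reduces to the inclusion $f\in\overline J$. I would establish this via the arc (valuative) criterion: for any nonconstant analytic arc $\varphi\colon(\Cbb,0)\to(M,p)$, the chain rule gives
\[
\frac{d}{dt}(f\circ\varphi)(t) \;=\; \sum_{i=1}^n (\partial_i f\circ\varphi)(t)\,\varphi_i'(t),
\]
whence $\mathrm{ord}_t\!\bigl(\tfrac{d}{dt}(f\circ\varphi)\bigr)\ge \min_i \mathrm{ord}_t(\partial_i f\circ\varphi)$. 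Because $f(p)=0$ we have $\mathrm{ord}_t(f\circ\varphi)\ge 1$, and in characteristic~$0$ this order equals the order of $\frac{d}{dt}(f\circ\varphi)$ plus one; so $\mathrm{ord}_t(f\circ\varphi) > \min_i \mathrm{ord}_t(\partial_i f\circ\varphi)$. By the valuative criterion, $f\in\overline J$, so $\overline{J^T}=\overline J$ and $s(\hat p,M)=s(\hat p\,',M)=\mu_X(p)\,[p]$.

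The only substantive obstacle is the integral-closure inclusion $f\in\overline J$, which is the algebraic shadow of the Łojasiewicz inequality and is handled cleanly by the arc argument above. It is worth noting that this arc argument uses only $f(p)=0$; the isolated-singularity hypothesis enters exclusively through the complete-intersection step, which is what lets us identify the length of $\cO_{M,p}/J$ with the Milnor number. Finally, the fact that $\hat p$ is only a component of $JX$ rather than all of $JX$ is immaterial, since the Segre class $s(-,M)$ is local on the source subscheme and all other components of $JX$ are disjoint from~$p$.
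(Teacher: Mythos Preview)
Your proof is correct and follows essentially the same approach as the paper's: both replace the Tyurina ideal by the Jacobian ideal via the integral-closure invariance of Segre classes, then invoke Example~\ref{ex:gmSm} to identify the resulting Segre class with the colength $\dim_{\Cbb}\cO_{M,p}/J=\mu_X(p)$. The only difference is that the paper cites a reference for the inclusion $f\in\overline J$, while you supply an explicit arc-criterion argument; both rely on characteristic~$0$ in the same way.
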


From this observation and~\eqref{eq:muldis}, it follows that if $X$ only has isolated
singularities $p_1,\dots, p_r$, then (under mild hypotheses) the multiplicity of the 
discriminant of $\cO(X)$ at $X$ equals the sum of the Milnor numbers 
$\sum_i \mu_X(p_i)$. For an earlier proof of this fact, at least in the context of dual 
varieties, cf.~\cite{MR856206}.

\begin{proof}
In characteristic $0$, $f$ is integral over the ideal generated by its partials
(see e.g., \cite[Example~1.43]{MR2153889}), therefore $s(\hat p,M)=s(p', M)$, where 
$p'$ is the scheme defined by $(\partial f/\partial z_1,\dots, \partial f/ \partial z_n)$.
Now (Example~\ref{ex:gmSm}) $s(p', M)=m[p]$, where $m$ is the geometric multiplicity 
of $p$ in $p'$. By definition,
\[
m=\dim \cO_{M,p}/(\partial f/\partial z_1,\dots, \partial f/ \partial z_n)\saf,
\]
and this equals the Milnor number $\mu$ (\cite[p.~115]{MR0239612}).
\end{proof}

As an alternative, one can verify that $s(\hat p,M)$ evaluates the effect on the Euler 
characteristic of $X$ due to the presence of the singularity $p$, 
cf.~\cite[Example~14.1.5(b)]{85k:14004}.

Adam Parusi\'nski (\cite{MR949831}) defines a generalization of the 
Milnor\index{Milnor!number!Parusi\'nski's} 
number to hypersurfaces
with arbitrary (compact) singular locus. A section $s$ of $\cO(X)$ defining $X$ determines 
a section $ds$ of $T^\vee M\otimes \cO(X)$ in a neighborhood of $X$, of which $JX$ is the 
zero-scheme (Definition~\ref{def:JX}). By definition, Parusi\'nski's generalized Milnor number 
$\mu(X)$ equals the contribution of the singular locus to the intersection number of the image 
of this section and the zero section of $T^\vee M\otimes \cO(X)$. 

\begin{proposition}[{\cite[Proposition~2.1]{MR97b:14057}}]
With notation as above,
\begin{equation}\label{eq:parnum}
\mu(X) = \int c(T^\vee M\otimes \cO(X))\cap s(JX,M)\saf.
\end{equation}
\end{proposition}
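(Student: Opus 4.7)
The plan is to derive (\ref{eq:parnum}) as a direct application of the localized top-Chern-class formula recorded in Example~\ref{ex:loctcc}, in particular equation~(\ref{eq:contrize}). The setup fits that template perfectly: by Definition~\ref{def:JX}, the singularity subscheme $JX$ is the zero-scheme of the section $ds$ of the rank-$(\dim M)$ vector bundle $E:=T^\vee M\otimes \cO(X)$, defined in a neighborhood of $X$ in $M$. By hypothesis the support of $JX$ is compact, so it contributes a well-defined localized intersection number between the image of $ds$ and the zero section of $E$.

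First, I would recall from Example~\ref{ex:loctcc} that if $\sigma$ is any section of a vector bundle $E$ on a variety $W$, then for any union of connected components $Z$ of the zero-scheme $Z(\sigma)$, the contribution of $Z$ to the self-intersection $W\cdot \sigma(W)=c_{\rk E}(E)\cap [W]$ equals
\[
\iota_*\bigl\{c(E|_Z)\cap s(Z,W)\bigr\}_{\dim W-\rk E}.
\]
Next, I would apply this with $W=M$, $\sigma=ds$, and $Z=JX$ (or its compact part, which is all that matters for the definition of $\mu(X)$). Since $\rk E=\dim M$, the dimension of the component extracted is $\dim M-\rk E=0$, and integrating gives
\[
\mu(X)=\int \bigl\{c(T^\vee M\otimes \cO(X))\cap s(JX,M)\bigr\}_0
=\int c(T^\vee M\otimes \cO(X))\cap s(JX,M),
\]
where the last equality holds because only the zero-dimensional component of the class contributes to the degree. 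This is precisely~(\ref{eq:parnum}).

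The only genuine subtlety is matching Parusi\'nski's definition of $\mu(X)$ (as the localized contribution of the singular locus to an intersection of sections) with the contribution singled out by~(\ref{eq:contrize}). This amounts to observing that $ds$, although only defined in a neighborhood of $X$, still provides a bona fide local section of $E$ near $JX$; the localization of the top Chern class depends only on such a neighborhood, since the derivation of~(\ref{eq:contrize}) uses only the deformation of $\sigma(W)$ to the zero section near its zero-scheme together with the basic intersection formula~(\ref{eq:FMdef}). Once this identification is granted, no further calculation is required, and the compactness of the support of $JX$ ensures that the degree on the right-hand side is finite and well-defined.
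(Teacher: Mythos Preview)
Your proposal is correct and follows essentially the same approach as the paper: both identify $\mu(X)$ as the localized contribution of $JX$ to the top Chern class of $T^\vee M\otimes\cO(X)$ via the section $ds$, and extract it using the Fulton--MacPherson formula~(\ref{eq:FMdef}) (you via its repackaging~(\ref{eq:contrize}), the paper via an explicit fiber diagram). The one point the paper handles more explicitly is the passage from the neighborhood $U$ where $ds$ is defined to $M$: it notes that $s(JX,U)=s(JX,M)$ because open embeddings are flat (Proposition~\ref{prop:flapro}), which is the precise justification behind your informal remark that the localization depends only on a neighborhood of $JX$.
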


\begin{proof}
Let $U$ be a neighborhood of $JX$ where $ds$ is defined, and consider the fiber
diagram
\[
\xymatrix{
JX \ar[r] \ar[d] & U \ar[d]^{ds} \\
U \ar[r]^-0 & T^\vee M\otimes \cO(X)|_U
}
\]
The normal bundle of the zero-section equals $T^\vee M\otimes \cO(X)|_{JX}$,
and $s(JX,U)=s(JX,M)$ as open embeddings are flat, cf.~Proposition~\ref{prop:flapro}. 
The stated formula follows then from~\eqref{eq:FMdef}.
\end{proof}

In other words, Parusi\'nski's Milnor number equals the localized contribution of~$JX$ to the 
degree of the top Chern class of $T^\vee M\otimes \cO(X)$. 
(But note that in general the section $s$ does not extend to an algebraic section defined on 
the whole of $M$, so this number does {\em not\/} equal the degree of the top Chern class.)

If $M$ is compact and $X_\text{gen}$ is a nonsingular hypersurface linearly equivalent 
to~$X$, then
\begin{equation}\label{eq:parmil}
\mu(X)=(-1)^{\dim X} (\chi(X_\text{gen})-\chi(X))
\end{equation}
where $\chi$ denotes the topological Euler characteristic
(\cite[Corollary~1.7]{MR949831}, and cf.~\cite[Example~14.1.5(b)]{85k:14004}). 
Thus this generalization of the Milnor number can also
be interpreted as the effect on the Euler characteristic of $X$ due to its singular locus.
This observation is at the root of the definition of the `Milnor class', see~\S\ref{ss:Mil}.

Comparing~\eqref{eq:Eudis} and~\eqref{eq:parnum}, we see that, under reasonable
hypotheses, this generalized Milnor number equals the local Euler obstruction of the 
discriminant of~$\cO(X)$ at $X$. 
The class $c(T^\vee M\otimes \cO(X))\cap s(JX,M)$ appearing in these formulas is the 
`$\mu$-class'\index{$\mu$-class} 
studied in~\cite{MR97b:14057}. Even when $JX$ or $M$ are not compact, 
this class carries interesting information. 


\section{Characteristic classes}\label{sec:Charcla}
\index{characteristic class}
The formalism of Segre classes provides a unifying point of view on several 
`characteristic classes' for singular varieties. We refer here to various generalizations
to (possibly) singular varieties of the basic notion of total Chern class of the tangent
bundle of a nonsingular variety:
\[
c(TV)\cap [V]\in A_*(V)\saf.
\]
This is class of evident importance in the nonsingular case. The codimension-$1$ term
$c_1(TV)\cap [V]$ is the canonical class of $V$, up to a sign. For compact complex varieties, 
the degree of the dimension~$0$ term equals the topological Euler characteristic, as a
consequence of the classical Poincar\'e-Hopf theorem. The total Chern class is effective
if the tangent bundle is suitably ample. For nonsingular toric varieties, the class has a 
compelling combinatorial interpretation: it is the sum of the classes of the torus orbit closures,
which are determined by the cones of the corresponding fan. In any case, the sheaf
of differentials is in a sense the `only' canonically determined sheaf on a scheme, and
the total Chern class of the (co)tangent bundle is correspondingly the `only' canonically
defined class in the Chow group of a nonsingular variety. 

It is natural to explore generalizations of this notion to singular varieties, and in this 
section we will review
different alternatives for such an extension, as they relate to Segre classes. 
We remark that there are several other notions of `characteristic class' associated to a 
variety (for example the Todd and L classes), and modern unifications of these notions, 
such as the {\em Hirzebruch\/} and {\em motivic Chern class\/} of 
Brasselet-Sch\"urmann-Yokura (\cite{MR2646988}). While analogues of Segre classes
may be defined in these different contexts, we will limit ourselves to the characteristic
classes defined in the Chow group and having a direct relation with the classical 
notion of Segre classes discussed in~\S\ref{sec:SegreClasses}.
We will also not deal with germane notions such as Johnson's or Yokura's Segre
classes (see~\cite{MR463161}, \cite{MR857438}).

\subsection{Chern-Fulton and Chern-Fulton-Johnson classes}\label{ss:CF}
Let $X$ be a scheme that can be embedded as a closed subscheme of a nonsingular
variety $M$. Here no restrictions on the characteristic of the ground field are needed.
We let
\begin{equation}\label{eq:FFJ}
\begin{aligned}
\cfu(X)&:=c(TM|_X)\cap s(X,M) \\
\cfj(X)&:=c(TM|_X)\cap s(\cN_XM)\saf.
\end{aligned}
\end{equation}
Here $\cN_XM=\cI/\cI^2$, where $\cI$ is the ideal sheaf of $X$ in $M$; the
Segre class $s(\cN_XM)$ is obtained by applying the basic construction of Segre
classes to the cone $\Sym^*_{\cO_X}(\cN_XM)$ (see~\S\ref{ss:Segredefs}).
We call $\cfu(X)$ the 
`Chern-Fulton\index{Chern!-Fulton class} 
class' of $X$, and $\cfj(X)$ the 
`Chern-Fulton-Johnson'\index{Chern!-Fulton-Johnson class}
class of $X$. The following result shows that these classes are canonically determined
by $X$.

\begin{theorem}[{\cite[Example~4.2.6]{85k:14004}, \cite{MR595428}}]\label{thm:Fulton}
The classes $\cfu(X)$, $\cfj(X)$ defined above are independent of the ambient
nonsingular variety $M$.
\end{theorem}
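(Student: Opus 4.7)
The plan is to compare any two closed embeddings $i_1 : X \hookrightarrow M_1$ and $i_2 : X \hookrightarrow M_2$ into nonsingular varieties by passing through the common ambient $M := M_1 \times M_2$ with the diagonal embedding $\iota := (i_1, i_2) : X \hookrightarrow M$. The projections $p_k : M \to M_k$ are smooth and satisfy $p_k \circ \iota = i_k$, so it suffices to prove, for each $k \in \{1,2\}$,
\[
c(TM|_X) \cap s(X, M) = c(TM_k|_X) \cap s(X, M_k), \qquad c(TM|_X) \cap s(\cN_X M) = c(TM_k|_X) \cap s(\cN_X M_k).
\]

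Each identity splits into Chern and Segre factors. The smooth tangent sequence $0 \to T_{p_k} \to TM \to p_k^* TM_k \to 0$ and Whitney give $c(TM|_X) = c(T_{p_k}|_X) \cdot c(TM_k|_X)$. What remains are the matching Segre compatibilities
\[
s(X, M) = c(T_{p_k}|_X)^{-1} \cap s(X, M_k), \qquad s(\cN_X M) = c(T_{p_k}|_X)^{-1} \cap s(\cN_X M_k).
\]

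The conormal version is handled algebraically by introducing $W := p_k^{-1}(i_k(X)) \subseteq M$: smoothness of $p_k$ makes $W \to i_k(X) \cong X$ smooth of relative dimension $r := \rk T_{p_k}$, and $\iota$ factors through $W$ as a section of this smooth morphism, hence as a regular embedding with normal bundle $T_{p_k}|_X$. The conormal sequence for $X \hookrightarrow W \hookrightarrow M$ reads
\[
0 \to \cN_{W/M}|_X \to \cN_{X/M} \to \cN_{X/W} \to 0,
\]
where $\cN_{X/W} = T_{p_k}^\vee|_X$ is locally free and $\cN_{W/M}|_X \cong \cN_{X/M_k}$ by flat base change along $p_k$. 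Passing to $\Spec\Sym$ produces an exact sequence of cones $0 \to T_{p_k}|_X \to \Spec\Sym(\cN_{X/M}) \to \Spec\Sym(\cN_{X/M_k}) \to 0$, and multiplicativity of Segre classes in a vector-bundle extension of cones yields the second identity.

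The normal-cone identity is the main obstacle, because $C_X M$ is not $\Spec\Sym$ of a single sheaf when $\iota$ fails to be regular, so the conormal argument does not transport directly. My plan is to produce the analogous exact sequence $0 \to T_{p_k}|_X \to C_X M \to C_X M_k \to 0$ by working \'etale-locally on $M$: there $p_k$ trivializes to a projection $M_k \times \Abb^r \to M_k$ and $\iota$ becomes $i_k$ followed by the zero section, so a direct inspection of the associated graded of $\cI_{X/M}$ in that model gives $C_X M \cong C_X M_k \times_X \Abb^r$. The global compatibility can then be assembled by invoking Proposition~\ref{prop:flapro} for the flat morphism $p_k$ (which gives $s(W, M) = g^* s(X, M_k)$ for $g := p_k|_W$) and refining down to $X$ along the regular embedding $\iota : X \hookrightarrow W$. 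Combining with the Chern identity yields
\[
c(TM|_X) \cap s(X, M) = c(TM_k|_X)\,c(T_{p_k}|_X) \cap c(T_{p_k}|_X)^{-1} \cap s(X, M_k) = c(TM_k|_X) \cap s(X, M_k),
\]
and analogously for $\cfj$, completing the proof.
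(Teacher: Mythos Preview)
Your approach is the same as the paper's: compare two embeddings by passing to the diagonal in $M_1\times M_2$ and invoking an exact sequence of cones. The paper simply quotes the sequence $0\to TN|_X\to C_X(M\times N)\to C_XM\to 0$ from \cite[Examples~4.1.6, 4.1.7, 4.2.6]{85k:14004}; you attempt to justify it.

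Your treatment of $\cfj$ via the conormal short exact sequence is fine. For $\cfu$ there is a loose end. The \'etale-local model is reasonable, but the sentence ``the global compatibility can then be assembled by invoking Proposition~\ref{prop:flapro} \ldots\ and refining down to $X$ along the regular embedding $\iota:X\hookrightarrow W$'' does not do what you need. Proposition~\ref{prop:flapro} gives $s(W,M)=g^*s(X,M_k)$ in $A_*(W)$; the only natural way to descend this to $A_*(X)$ is the Gysin map $\iota^*$ for the regular embedding $X\hookrightarrow W$, and $\iota^*g^*=\mathrm{id}$, so you simply recover $s(X,M_k)$ and learn nothing about $s(X,M)$. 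There is no general ``refining'' formula expressing $s(X,M)$ in terms of $s(W,M)$ when it is $X\hookrightarrow W$ (rather than $W\hookrightarrow M$) that is regular.

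What actually closes the argument is that the two maps in your exact sequence of cones are defined \emph{globally}: the closed immersion $N_XW=T_{p_k}|_X\hookrightarrow C_XM$ comes from the surjection of graded rings induced by $\cI_{X,M}\twoheadrightarrow\cI_{X,W}$, and the morphism $C_XM\to (C_WM)|_X\cong C_XM_k$ from the inclusion $\cI_{W,M}\subseteq\cI_{X,M}$ together with the flat base change $C_WM\cong g^*C_XM_k$. Exactness is then a local question, settled by your \'etale computation. With the global sequence in hand, the same multiplicativity of Segre classes in a vector-bundle extension of cones that you used for $\cfj$ gives $s(X,M)=c(T_{p_k}|_X)^{-1}\cap s(X,M_k)$ directly, with no detour through $s(W,M)$.
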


This is proved by relating classes determined by different embeddings by means of
`exact sequences of cones' (cf.~\cite[Examples~4.1.6, 4.1.7]{85k:14004}). 
For instance,
if $X\hookrightarrow M$ and $X\hookrightarrow N$ are distinct embeddings in nonsingular 
varieties, then we have a diagonal embedding $X\subseteq M\times N$, and there is a 
corresponding exact sequence of cones
\[
\xymatrix{
0 \ar[r] & TN|_X \ar[r] & C_X(M\times N) \ar[r] & C_XM \ar[r] & 0
}
\]
implying
\[
s(X,M\times N) = s(TN|_X)\cap s(X,M)\saf.
\]
The independence of $\cfu(X)$ follows. 

Theorem~\ref{thm:Fulton} is useful in the computation of Segre classes; it is employed in
the computations leading to the formulas presented in Example~\ref{ex:discri1} 
and~\ref{ex:discri2}. It has the following consequence.

\begin{corollary}
Let $X=V$ be a nonsingular variety. Then
\[
\cfu(V)=\cfj(V)=c(TV)\cap [V]\saf.
\]
\end{corollary}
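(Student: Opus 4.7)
The plan is to invoke Theorem~\ref{thm:Fulton}, which tells us that $\cfu(V)$ and $\cfj(V)$ are independent of the choice of ambient nonsingular variety. Since $V$ itself is nonsingular, we are free to choose any convenient embedding. I will verify the equality in two essentially equivalent ways: first by the trivial choice $M=V$, then by any nontrivial regular embedding, to make the role of the various ingredients transparent.

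First I take $M=V$ with the identity embedding. Then the ideal sheaf $\cI$ of $V$ in $V$ is zero, so $C_VV=V$ and $\Pbo(C_VV\oplus\one)=V$, giving $s(V,V)=[V]$ (this is exactly the case flagged in Remark~\ref{rem:addfac}). Therefore
\[
\cfu(V)=c(TV|_V)\cap s(V,V)=c(TV)\cap[V].
\]
For the Chern-Fulton-Johnson class the conormal sheaf $\cN_VV=\cI/\cI^2$ also vanishes, so $\Sym^*_{\cO_V}(\cN_VV)=\cO_V$; applying the cone construction of~\S\ref{ss:Segredefs} (with the trivial factor $\one$ added) one sees that $s(\cN_VV)\cap[V]=[V]$, and hence $\cfj(V)=c(TV)\cap[V]$ as well.

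To see why the answer is independent of the embedding (and to double-check consistency with Theorem~\ref{thm:Fulton}), I would next consider an arbitrary embedding of $V$ into a nonsingular ambient $M$. Because both $V$ and $M$ are nonsingular, this embedding is regular, with normal bundle $N_VM$, so that $C_VM=N_VM$ is a vector bundle and by the discussion in~\S\ref{ss:Segpro}
\[
s(V,M)=c(N_VM)^{-1}\cap[V],\qquad s(\cN_VM)\cap[V]=c(N_VM)^{-1}\cap[V].
\]
The normal bundle exact sequence $0\to TV\to TM|_V\to N_VM\to 0$ combined with the Whitney formula gives $c(TM|_V)=c(TV)\,c(N_VM)$, and substituting into the defining formulas~\eqref{eq:FFJ} the factor $c(N_VM)$ cancels against $c(N_VM)^{-1}$, leaving $c(TV)\cap[V]$ in both cases.

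No serious obstacle is expected. The only mildly subtle point is the Chern-Fulton-Johnson case when $M=V$, where $\cN_VV=0$ and one must carefully unwind the definition of $s(\cN_VV)$ through the projective completion of the associated cone to confirm it equals $[V]$; once that is in hand, Theorem~\ref{thm:Fulton} guarantees the answer is the same for any embedding.
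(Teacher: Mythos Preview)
Your proof is correct and follows essentially the same approach as the paper: invoke Theorem~\ref{thm:Fulton} to justify the choice $M=V$, then observe that $s(V,V)=s(\cN_VV)=[V]$. Your additional verification via an arbitrary regular embedding (using the normal bundle sequence and Whitney's formula) is a sound consistency check, but it is not needed once Theorem~\ref{thm:Fulton} is in hand.
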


\begin{proof}
By Theorem~\ref{thm:Fulton}, we can use $X=M=V$ to compute $\cfu(V)$ and $\cfj(V)$;
and $s(V,V) = s(\cN_VV) = [V]$.
\end{proof}

Therefore these two classes are generalizations of the notion of total Chern class from 
the nonsingular case. They both satisfy formal properties analogous
to the nonsingular case. For example, both classes satisfy expected 
adjunction formulas for sufficiently transversal intersections with smooth subvarieties:
for the Chern-Fulton class, this follows from~\cite[Theorem~3.2]{MR3608229}; and
see~\cite[\S3]{MR595428} for the Chern-Fulton-Johnson class.
On the other hand, some simple relations in the nonsingular case do {\em not\/}
hold for these classes.

\begin{example}\label{ex:unf}
If $W\subseteq X\subseteq M$, with both $X$ and $M$ nonsingular, we may 
compute~$\cfu(W)$ using either embedding, and Theorem~\ref{thm:Fulton} implies that
\begin{equation}\label{eq:compa}
s(W,X) = c(N_XM|_W)\cap s(W,M)\saf.
\end{equation}
For instance, if $X$ is a {\em nonsingular\/} hypersurface, then
\[
s(W,X) = c(\cO(X))\cap s(W,M)\saf.
\]
Such appealingly simple formulas do {\em not\/} hold in general if $X$ is singular, 
even if it is regularly embedded in $M$ (so that $N_XM$ is defined, and the terms in
the formulas make sense). Indeed, \eqref{eq:compa} fails already for $W=$ a singular 
point of a curve $X$ in~$M=\Pbb^2$. 

Without additional hypotheses on $W$ and $X$ guaranteeing that the corresponding sequence of 
cones is exact, the Segre class of $W$ in $X$ is not determined by the class of $W$ in~$M$. 
Sean Keel (\cite{MR1040263}) proved that~\eqref{eq:compa} does hold if $X$ is regularly 
embedded, provided that the embedding $W\subseteq X$ is 
`linear'.\index{linear embedding of schemes}

It would be useful to have precise comparison results relating the difference between
the two sides of~\eqref{eq:compa} to the singularities of $X$. We will encounter below 
(Remark~\ref{rem:sigdif}) one case in which this difference has a clear significance.
\qede\end{example} 

The classes $\cfu(X)$ and $\cfj(X)$ differ in general. The discrepancy is a manifestation
of the difference between the associated graded ring of an ideal $I$ of a commutative
ring $R$, that is, $\oplus_k I^k/I^{k+1}$, and the symmetric algebra of $I/I^2$ over $R/I$. 
The former is in a sense closer to the ring $R$: for example, in the geometric context and 
if $R$~is an integral domain, the Krull dimension of the associated graded ring 
equals the dimension of $R$, while the dimension of the symmetric algebra of a module 
is bounded below by the number of generators of the module (\cite[Corollary~2.8]{MR825143}). 
The difference is analogous to the difference between the tangent {\em cone\/} of a scheme
at a point and the tangent {\em space\/} of the same: the former may be viewed as an
analytic approximation of the scheme at the point, while the latter only records the 
minimal embedding dimension.
Accordingly, $\cfu(X)$ is perhaps a more natural object of study than $\cfj(X)$. The triple 
planar point $X$ defined by the ideal $(x^2,xy,y^2)$ in the affine plane $\Abb^2$
gives a concrete example for which $\cfu(X)\ne \cfj(X)$ (see~\cite[\S2.1]{MR2143071}).

One class of ideals for which the associated graded ring is isomorphic to the symmetric 
algebra is given by ideals generated by regular sequences (cf.~\cite{MR31:1275},
\cite[Theorem~1.3]{MR83b:14017}).
Thus, $\cfu(X)=\cfj(X)$
{\em if $X$ is a local complete intersection.\/} In this case the embedding $X\subseteq M$
is regular, $s(X,M)=c(N_XM)^{-1}\cap [X]$ (\S\ref{ss:Segpro}), and therefore
\begin{equation}\label{eq:fufjvir}
\cfu(X)=\cfj(X) = c(T_\text{vir} X)\cap [X]\saf,
\end{equation}
where $T_\text{vir} X$ is the class $TM|_X-N_XM$ in the Grothendieck group of
vector bundles on $X$ (so $c(T_\text{vir})=c(TM|_X) c(N_XM)^{-1}$).
We can view $T_\text{vir} X$ as a 
`virtual\index{virtual!tangent bundle} 
tangent bundle' for $X$; it is well-defined for
local complete intersections, i.e., independent of 
the ambient nonsingular variety $M$. We note that, more generally,
\[
\cfu(Z) = c(T_\text{vir} X)\cap s(Z,X)
\]
if $Z$ is 
{\em linearly\/}\index{linear embedding of schemes} 
embedded in a local complete intersection $X$, 
cf.~Example~\ref{ex:unf}.

If $X$ is a local complete intersection, we will denote the class~\eqref{eq:fufjvir} by 
$\cvir(X)$, the 
`virtual'\index{Chern!class!virtual}\index{virtual!Chern class} 
Chern class of $X$. For instance, if $X=D$ is a hypersurface 
in a nonsingular variety $M$, 
then
\[
\cvir(D)=\cfu(D)=\cfj(D) = c(TM|_D)\cap \frac{[D]}{1+D}\saf.
\]
This implies the following useful interpretation of the Chern-Fulton / Fulton-Johnson
class of a hypersurface.

\begin{proposition}\label{prop:cvirgen}
Let $i:D\hookrightarrow M$ be a hypersurface in a nonsingular variety $M$, and let 
$i':D_\text{gen}\to M$ be a nonsingular hypersurface such that $[D]=[D_\text{gen}]$. 
Then
\[
i_* \cvir(D) = i'_* (c(TD_\text{gen})\cap [D_\text{gen}])\saf.
\]
\end{proposition}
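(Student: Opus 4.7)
The plan is to push both classes forward to $A_*(M)$ and verify that each reduces to one and the same expression, namely $c(TM)\cap \sum_{k\geq 1}(-1)^{k-1} c_1(\cO(D))^{k-1}\cap [D]$ in $A_*(M)$. Since this expression depends on the divisor only through its rational equivalence class in $M$, the hypothesis $[D]=[D_\text{gen}]$ will then yield the equality.

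First I would expand the left-hand side using $\cvir(D) = c(TM|_D)\cap s(D,M)$. By Example~\ref{ex:Cardiv} applied to the Cartier divisor $D\subseteq M$,
\[
s(D,M) \;=\; \sum_{k\geq 1}(-1)^{k-1}\, c_1(\cO(D)|_D)^{k-1}\cap [D] \;\in A_*(D).
\]
Two applications of the projection formula---one for $c(TM|_D)=i^*c(TM)$, another for $c_1(\cO(D)|_D)=i^*c_1(\cO(D))$---then give
\[
i_*\cvir(D) \;=\; c(TM)\cap \sum_{k\geq 1}(-1)^{k-1}\, c_1(\cO(D))^{k-1}\cap [D] \;\in A_*(M),
\]
a finite sum since each factor of $c_1$ strictly lowers dimension in $M$.

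For the right-hand side, I would exploit the fact that $D_\text{gen}$ is nonsingular so that the tangent--normal exact sequence $0\to TD_\text{gen}\to TM|_{D_\text{gen}}\to \cO(D_\text{gen})|_{D_\text{gen}}\to 0$ holds. The Whitney formula yields $c(TD_\text{gen})=c(TM|_{D_\text{gen}})\cdot c(\cO(D_\text{gen})|_{D_\text{gen}})^{-1}$, so
\[
c(TD_\text{gen})\cap [D_\text{gen}] \;=\; c(TM|_{D_\text{gen}})\cap s(D_\text{gen},M) \;=\; \cvir(D_\text{gen}),
\]
and the same push-forward computation then produces
\[
i'_*\bigl(c(TD_\text{gen})\cap [D_\text{gen}]\bigr) \;=\; c(TM)\cap \sum_{k\geq 1}(-1)^{k-1}\, c_1(\cO(D_\text{gen}))^{k-1}\cap [D_\text{gen}].
\]

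Finally, it remains to compare the two sums. The hypothesis $[D]=[D_\text{gen}]$ in $A_*(M)$ implies that $c_1(\cO(D))$ and $c_1(\cO(D_\text{gen}))$ act identically on $A_*(M)$, since on a nonsingular ambient variety the first Chern class of the line bundle associated with a divisor acts as intersection with that divisor class. Thus both series agree term by term, and the proposition follows. I do not foresee a substantive obstacle here; the argument is essentially bookkeeping for the projection formula, and the only place requiring any care is this last identification of Chern-class operators from equality of cycle classes, which is standard once $M$ is nonsingular.
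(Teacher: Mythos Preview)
Your proposal is correct and is exactly the argument the paper has in mind: the proposition is stated immediately after the formula $\cvir(D)=c(TM|_D)\cap \frac{[D]}{1+D}$ with the remark ``This implies the following useful interpretation,'' and no further proof is given. Your write-up simply makes explicit the implicit push-forward via the projection formula and the observation that, after pushing to $A_*(M)$, the expression depends only on the rational equivalence class $[D]$ (equivalently on $\cO(D)$), together with the identification $c(TD_\text{gen})\cap[D_\text{gen}]=\cvir(D_\text{gen})$ via the normal bundle sequence.
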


In particular, over $\Cbb$ and if $M$ is compact, then $\int \cfu(D)=\chi(D_\text{gen})$
is the topological Euler characteristic of a smoothing of $D$, when a smoothing is
available. Barbara Fantechi and Lothar G\"ottsche prove that in fact $\int \cfu(X)$
is constant along lci deformations if $X$ is a local complete 
intersection~\cite[Proposition~4.15]{MR2578301}.

These results may be seen as indicating that a class such as $\cfu(X)$ is {\em not\/}
useful in the study of singularities, precisely because (at least in the lci case) it is blind
to the singularities of $X$. This feature is balanced by the sensitivity of $\cfu(X)$
to the scheme structure of $X$; as we will see below (Proposition~\ref{prop:csmcf}) 
this can be used to encode in a Chern-Fulton class substantial information on the 
singularities of $X$.

Bernd Siebert obtains a formula for the 
`virtual fundamental class'\index{virtual!fundamental class} 
in Gromov-Witten
theory in terms of the Chern-Fulton class, \cite[Theorem~4.6]{MR2115776}. Siebert
also argues that $\cfu(X)$ could be considered as the Segre class of the 
Behrend-Fantechi {\em intrinsic 
normal cone\/}\index{normal cone!intrinsic}
of $X$ (\cite{MR1437495}).

\subsection{The Deligne-Grothendieck conjecture and MacPherson's theorem}\label{ss:DDMp}
A functorial theory of Chern classes arose in work of Alexander Grothendieck and 
Pierre Deligne. Here we will assume that the ground field is algebraically closed, of 
characteristic~$0$.

For an algebraic variety $X$, we denote by $\Ff(X)$ the group of integer-valued
constructible\index{constructible function} 
functions on $X$. These are integer linear combinations of 
indicator functions for constructible subsets of $X$; equivalently, every constructible
function $\varphi\in \Ff(X)$ may be written
\[
\varphi = \sum_W m_W \one_W
\]
where $W$ ranges over 
subvarieties of $X$, $\one_W(p)=1$ or $0$ according
to whether $p\in W$ or $p\not\in W$, and $m_W\in \Zbb$ is nonzero for only finitely
many subvarieties $W$. 

For a proper morphism $f: X\to Y$, we can define a push-forward of constructible 
functions $f_*\colon \Ff(X) \to \Ff(Y)$. By linearity, this is determined by the push-forward
$f_*(\one_W)$ of the indicator function of a 
subvariety $W$ of $X$; we set
\[
f_*(\one_W)(p) := \chi(f^{-1}(p)\cap W)
\]
for $p\in Y$. Here, $\chi$ is the topological Euler characteristic if $k=\Cbb$, and a suitable
generalization for more general algebraically closed fields of characteristic~$0$
(see e.g.,~\cite[\S2.1]{MR3031565}).

With this push-forward, the assignment $X \mapsto \Ff(X)$ is a covariant functor from the 
category of algebraic $k$-varieties, with proper morphisms, to the category of abelian groups
(\cite[Proposition~1]{MR0361141} for the complex case; the argument generalizes to more
general fields).

The Chow group is {\em also\/} a covariant functor between the same categories. 
The following statement, whose conjectural formulation is attributed to Deligne and 
Grothendieck,\index{Deligne-Grothendieck conjecture} 
gives a precise relationship between these two functors. It was proved
by MacPherson~\cite{MR0361141}.

\begin{theorem}\label{thm:MacP}
\index{MacPherson natural transformation}
There exists a natural transformation $c_*: \Ff \Rightarrow A_*$ which, on a nonsingular 
variety $V$, assigns to the constant function $\one_V$ the total Chern class $c(TV)\cap [V]$.
\end{theorem}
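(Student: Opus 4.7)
The plan is to construct $c_*$ explicitly on a natural basis for $\Ff(X)$ and then verify naturality and normalization. For any subvariety $W \subseteq X$, define the \emph{Chern-Mather class} $\cma(W) \in A_*(W)$ using the Nash blow-up $\nu\colon \hat W \to W$ and its tautological bundle $\hat T$ of rank $\dim W$:
\[
\cma(W) := \nu_*(c(\hat T) \cap [\hat W]) \in A_*(W),
\]
and push forward to $A_*(X)$ via the inclusion $W \hookrightarrow X$. This is the natural ``Chern class'' one gets by replacing $TW$ (which need not exist) by $\hat T$ on the Nash modification, and the Gonzalez-Sprinberg--Verdier formula of Theorem~\ref{thm:GS} shows it is compatible with Segre classes of fibers of $\nu$.

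The first key step is to exhibit a \emph{basis} of $\Ff(X)$ indexed by subvarieties, on which one can prescribe $c_*$. I would prove that the local Euler obstruction functions $\{\Eu_W : W \subseteq X \text{ a subvariety}\}$ form a $\Zbb$-basis of $\Ff(X)$. This is a triangularity argument: $\Eu_W$ is supported on $W$, its restriction to the smooth part $W^\circ$ equals $\one_{W^\circ}$, and its value at points of $W \smallsetminus W^\circ$ can be computed inductively from Euler obstructions of proper subvarieties. Hence the transition matrix between $\{\one_W\}$ and $\{\Eu_W\}$ is unitriangular with respect to dimension and inclusion of supports, so we may \emph{define}
\[
c_*(\Eu_W) := \cma(W),
\]
and extend by linearity to all of $\Ff(X)$.

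The hard part, and the heart of MacPherson's argument, is to verify naturality: for every proper morphism $f\colon X \to Y$,
\[
f_* c_*(\varphi) = c_*(f_*\varphi) \quad \text{for all } \varphi \in \Ff(X).
\]
By linearity and the basis property, it suffices to prove this for $\varphi = \Eu_W$ with $W \subseteq X$ a subvariety, and by further factoring $f|_W$ it suffices to treat the case where $X=W$ is a variety and $f$ is proper. I would first reduce to the case where $f$ is birational and $Y$ is a subvariety of the image, and then, using resolution of singularities and Chow's lemma, further reduce to two elementary situations: $f$ a proper birational morphism of nonsingular varieties, and $f$ the inclusion of a closed subvariety. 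The first case follows from the birational invariance built into the Chern-Mather construction (both sides are computed on a common Nash-type modification, using Proposition~\ref{prop:flapro}); the second case is the identity on the basis element $\Eu_W$. The technical obstacle is controlling the Euler characteristics of fibers that appear in $f_*\Eu_W$; MacPherson handles this through the \emph{graph construction}, deforming a generic section of $\hat T^\vee$ to a section whose zero-scheme localizes Chern classes along fibers of $f$, so that the Segre-class formula of Lemma~\ref{lem:zssc} translates the fiberwise Euler obstructions into the pushforward of $\cma$. An alternative, cleaner route (following Sabbah and Kennedy) is to reformulate everything via conormal cycles using Proposition~\ref{prop:Kennedy}: naturality then becomes compatibility of pushforward of Lagrangian cycles in $T^\vee M$ with the characteristic-cycle construction, which can be verified locally.

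Finally, I would check the normalization. If $V$ is nonsingular, then the Nash blow-up is $V$ itself with $\hat T = TV$, so $\cma(V) = c(TV) \cap [V]$. Moreover $\Eu_V \equiv 1$ on $V$ since the obstruction to extending a nowhere-zero section over a ball in a trivialization of $TV$ vanishes. Hence $\one_V = \Eu_V$ in $\Ff(V)$, and $c_*(\one_V) = \cma(V) = c(TV)\cap [V]$, as required. Uniqueness of $c_*$ satisfying both the naturality and normalization properties follows by resolution of singularities: any $\one_W$ can be expressed via pushforwards of indicator functions of smooth varieties, forcing the values of $c_*$ on a spanning set of $\Ff(X)$.
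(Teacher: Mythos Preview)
The paper does not prove this theorem; it is a survey, and the result is attributed to MacPherson (over~$\Cbb$, in homology) and to Kennedy (algebraically closed fields of characteristic~$0$, in Chow). The paper only describes the construction: the functions $\Eu_W$ form a basis of $\Ff(X)$, one sets $c_*(\Eu_W):=\cma(W)$, normalization follows since $\Eu_V=\one_V$ and $\cma(V)=c(TV)\cap[V]$ for $V$ nonsingular, uniqueness follows by resolution, and the substantive content is the verification of naturality, for which the paper simply points to MacPherson's graph construction and to the Sabbah--Kennedy Lagrangian reformulation. Your proposal follows exactly this outline, so it is in agreement with the paper's treatment.

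One point to tighten: your proposed reduction of naturality to ``$f$ a proper birational morphism of nonsingular varieties'' and ``$f$ the inclusion of a closed subvariety'' does not work as stated. A proper morphism $f\colon W\to Y$ with $\dim W>\dim f(W)$ (positive-dimensional fibers) cannot be factored through such maps, and this is precisely the case in which $f_*\Eu_W$ involves nontrivial fiberwise Euler characteristics. You correctly name the graph construction and the Lagrangian/conormal approach as the tools that handle this, but the preliminary reduction you sketch does not actually reduce to those tools; MacPherson's argument proceeds directly via the graph construction without such a factorization, and Kennedy's reduces instead to compatibility of Lagrangian specialization with proper push-forward.
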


MacPherson's statement and proof was for complex varieties, in homology; Fulton 
(\cite[Example~19.1.7]{85k:14004}) places the target in the Chow group. 
Gary Kennedy (\cite{MR1063344}) extended the result to arbitrary algebraically closed 
fields of characteristic~$0$. An alternative argument in this generality (and an alternative
construction of $c_*$) is given in~\cite{MR2282409}.

The natural transformation $c_*$ is easily seen to be unique if it exists, as its 
value is determined by the normalization requirement by resolution of singularities. 
MacPherson provides a different construction, not relying on resolutions; and then proves 
that this construction satisfies the covariance requirement. The ingredients in MacPherson's
construction are the {\em local Euler obstruction\/} $\Eu_X$, reviewed above 
in~\S\ref{ss:locEuob}, and the 
{\em Chern-Mather\index{Chern!-Mather class}
class\/} $\cma(X)$, which will be discussed 
below in~\S\ref{ss:CMa}. MacPherson defines $c_*$ by prescribing that
\[
c_*(\Eu_X) = \cma(X)\saf,
\]
and is able to prove that this assignment determines a natural transformation. 
Since $\Eu_V=\one_V$ if $V$ is nonsingular, this definition satisfies the normalization 
requirement in Theorem~\ref{thm:MacP}. Any choice of a constructible function on
varieties $X$ which takes the constant value $\one_V$ for nonsingular varieties $V$ will
then provide us with a `characteristic class' in the Chow group $A_*(X)$ agreeing with
the total Chern class of the tangent bundle when $X=V$ is a nonsingular variety, as
prospected in the leader to this section. 

\begin{example}
Let $D$ be a hypersurface in a nonsingular complex variety. Assume that $D$ may be realized 
as the central fiber of a flat family over a disk, such that the general fiber $D_\text{gen}$
is nonsingular. 
Verdier~\cite{MR629126}\index{Verdier specialization} 
defines a `specialization' of constructible functions 
from the general fiber to $D$, and proves that this specialization operation is compatible 
with MacPherson's natural transformation and specialization of Chow classes. 
As a consequence, if $\sigma(\one)$ denotes the specialization of the constant 
function~$\one$, we have 
\[
\cvir(D) = c_*(\sigma(\one))
\]
(cf.~Proposition~\ref{prop:cvirgen}). If $D$ is itself nonsingular, then $\sigma(\one)=\one_D$,
and $\cvir(D)=c(TD)\cap [D]$.
We do not know whether the Chern-Fulton or Chern-Fulton-Johnson classes admit a similar 
description for more general varieties.
\qede\end{example}

A formula for the Chern-Mather class due to Sabbah, \cite[Lemma~1.2.1]{MR804052},
leads to a useful alternative description of the image of a constructible function $\varphi$ 
via MacPherson's natural transformation $c_*$. 
In recalling this description, we essentially follow the lucid account given 
in~\cite[\S1]{MR2002g:14005}. 

Let $X$ be a proper subvariety of a nonsingular variety $M$. Every constructible
function $\varphi\in \Ff(X)$ may be written uniquely as a finite linear combination of local 
Euler obstructions of subvarieties of $X$:
\[
\varphi=\sum_{W\subseteq X} n_W \Eu_W
\]
(\cite[Lemma~2]{MR0361141}). Now recall (\ref{ss:locEuob}) that the conormal 
{\em space\/}\index{conormal!space} 
$N^\vee_WM$ of a possibly singular subvariety 
$W$ of $M$ is the closure of the conormal bundle of its nonsingular part
$W^\circ$: $N^\vee_WM := \overline{N^\vee_{W^\circ} M}$.
We associate with the local Euler obstruction of a subvariety $W$ of $M$ the 
cycle of the projectivization of its conormal space, up to a sign recording the parity 
of the dimension of $W$:
\begin{equation}\label{eq:EuT}
\Eu_W \mapsto (-1)^{\dim W} [\Pbo(N^\vee_W M)]\saf.
\end{equation}
By linearity, every constructible function on $X$ is then associated with a cycle in
the projectivized cotangent bundle of the ambient nonsingular variety $M$,
$\Pbo(T^\vee M)$, and in fact of the restriction $\Pbo(T^\vee M|_X)$ to $X$. 

\begin{definition}\label{def:charcyc}
\index{characteristic cycle}
The {\em characteristic cycle\/} of the constructible function 
$\varphi$ is the linear combination
\[
\Ch(\varphi):=\sum_{W\subseteq X} n_W (-1)^{\dim W} [\Pbo(N^\vee_W M)]\saf,
\]
where $\varphi=\sum_{W\subseteq X} n_W \Eu_W$.
\qede\end{definition}

(We have chosen to view $\Ch(\varphi)$ as a cycle in $\Pbo(T^\vee M|_X)$. It is also 
common in the literature to avoid the projectivization, and consider characteristic 
cycles as cycles in $T^\vee M|_X$.)

In keeping with the theme of this paper, we will formulate the alternative description
of $c_*$ stemming from Sabbah's work in terms of a Segre operator
(cf.~\cite[Lemma~4.3]{MR2097164}). For this, it is convenient to adopt the following
notation. If $A=\sum_i a_i$ is a rational equivalence class, where $a_i$ is the component
of dimension $i$, we will let
\[
A_\vee:= \sum_i (-1)^i a_i
\]
be the class obtained by changing the sign of all odd-dimensional components of $A$.
Note that if $E$ is a vector bundle, then
\[
(c(E)\cap A)_\vee = c(E^\vee) \cap A_\vee\saf.
\]
Later on, it will also be convenient to use the notation
\begin{equation}\label{eq:upvee}
A^\vee:= (-1)^{\dim M} A_\vee = \sum_i (-1)^{\dim M-i} a_i \saf,
\end{equation}
where $M$ is the fixed ambient nonsingular variety.

\begin{theorem}\label{thm:Sabthm}
The class $c_*(\varphi)_\vee$ is the shadow of the characteristic cycle $\Ch(\varphi)$.
That~is,
\begin{equation}\label{eq:Sabsha}
c_*(\varphi)=c(TM|_X)\cap \Segre_{T^\vee M|_X}(\Ch(\varphi))_\vee\saf.
\end{equation}
\end{theorem}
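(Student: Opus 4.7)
Both sides of the claimed identity are linear in $\varphi$: the left-hand side by additivity of $c_*$, the right-hand side by the linearity built into $\Ch$ (Definition~\ref{def:charcyc}) and the additivity of the Segre operator. I would therefore reduce at once to the case $\varphi = \Eu_W$ for a single subvariety $W \subseteq X$. In this case, the construction of MacPherson's transformation recalled in~\S\ref{ss:DDMp} gives $c_*(\Eu_W) = \cma(W)$ (pushed forward to $X$), and by Definition~\ref{def:charcyc} we have $\Ch(\Eu_W) = (-1)^{\dim W}[\Pbo(N^\vee_W M)]$; all computations may then be carried out in $A_*(W)$.

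Next, I would apply Lemma~\ref{lem:shad} to reinterpret the right-hand side as a shadow. Using the elementary identities $(c(E) \cap \gamma)_\vee = c(E^\vee) \cap \gamma_\vee$ and $(\gamma_\vee)_\vee = \gamma$, the asserted identity is equivalent to
\[
c_*(\varphi)_\vee = c(T^\vee M|_X) \cap \Segre_{T^\vee M|_X}\bigl(\Ch(\varphi)\bigr),
\]
and by Lemma~\ref{lem:shad} the right-hand side is exactly the shadow of $\Ch(\varphi)$ in $A_*(X)$. So the task reduces to showing that $\cma(W)_\vee$ is the shadow of $(-1)^{\dim W}[\Pbo(N^\vee_W M)]$ inside $\Pbo(T^\vee M|_W)$.

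To establish this, I would invoke Sabbah's description of the Chern-Mather class via the projectivized conormal space, \cite[Lemma~1.2.1]{MR804052}. Let $\kappa\colon \Pbo(N^\vee_W M) \to W$ and let $\overline T \subset \kappa^* T^\vee M|_W$ be the tautological rank-$(m-1)$ subbundle, as in Proposition~\ref{prop:Kennedy}. From the tautological exact sequence $0 \to \cO(-1) \to \pi^* T^\vee M \to \overline T^\vee \to 0$ on $\Pbo(T^\vee M)$ (restricted to $\Pbo(N^\vee_W M)$), the Whitney formula gives $c(\overline T^\vee) = \kappa^* c(T^\vee M|_W) \cdot c(\cO(-1))^{-1}$; the projection formula then rewrites $\kappa_*(c(\overline T^\vee) \cap [\Pbo(N^\vee_W M)])$ precisely in the form $c(T^\vee M|_W) \cap \Segre_{T^\vee M|_W}([\Pbo(N^\vee_W M)])$ (cf.~the definition~\eqref{eq:sigdef}, using that $\cO_E(1)$ restricts to $\cO_{N^\vee_W M}(1)$ along the subvariety $\Pbo(N^\vee_W M)$). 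Sabbah's formula identifies $(-1)^{\dim W-1}$ times this push-forward with $\cma(W)_\vee$; one can verify this by comparing the Nash-blow-up definition of $\cma(W)$ with the conormal-space description through a common modification dominating both $\hat W$ and $\Pbo(N^\vee_W M)$, as in the diagram following Proposition~\ref{prop:Kennedy}.

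I expect the main obstacle to be the careful bookkeeping of signs: the $(\cdot)_\vee$ operation flips signs by dimension, the factor $(-1)^{\dim W}$ is built into $\Ch$, and the various dualizations between $TM$ and $T^\vee M$ must be tracked consistently through both the Whitney formula and the comparison of $\hat T$ with $\overline T$. Once these are aligned, the equality is a formal consequence of Lemma~\ref{lem:shad} combined with Sabbah's description of $\cma$.
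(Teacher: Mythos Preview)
Your proposal is correct and follows essentially the same route as the paper: reduce by linearity to $\varphi=\Eu_W$, interpret the right-hand side as a shadow via Lemma~\ref{lem:shad}, and then invoke Sabbah's formula \cite[Lemma~1.2.1]{MR804052} (equivalently \cite[Lemma~1]{MR1063344}) for $\cma(W)$ in terms of the projectivized conormal space. Your Whitney-formula unpacking makes explicit a step the paper leaves to citation; the one slip is that the factor relating your push-forward to $\cma(W)_\vee$ should be $(-1)^{\dim W}$ rather than $(-1)^{\dim W-1}$, exactly the kind of sign bookkeeping you anticipated.
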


Indeed, \eqref{eq:Sabsha} is equivalent to
\begin{equation}\label{eq:sabbah}
c_*(\varphi) = (-1)^{\dim M-1} c(TM|_X)\cap \pi_*(c(\cO(1))^{-1}\cap \Ch(\varphi))\saf,
\end{equation}
where $\pi_*: \Pbo(T^\vee M|_X) \to X$ is the projection; this is~\cite[(12)]{MR2002g:14005},
and the right-hand side is the shadow of $\Ch(\varphi)$ by Lemma~\ref{lem:shad}, up
to changing the sign of every other component. By linearity,~\eqref{eq:sabbah} follows from
\index{Chern!-Mather class}
\[
\cma(W) = c_*(\Eu_W) = (-1)^{\dim M-\dim W-1} c(TM|_W)\cap 
\pi_*(c(\cO(1))^{-1}\cap [\Pbo(N^\vee_WM)])
\]
(where $\pi$ is now the projection to $W$). This formula is (equivalent to)
\cite[Lemma~1.2.1]{MR804052}; also cf.~\cite[Lemma~1]{MR1063344}.

Formula~\eqref{eq:Sabsha} should be compared with the formulas~\eqref{eq:FFJ} 
defining the Chern-Fulton and Chern-Fulton-Johnson classes. The Segre term
\index{Segre!-Schwartz-MacPherson class}
\[
\Segre_{T^\vee M|_X}(\Ch(\varphi))_\vee
\]
plays for MacPherson's natural transformation precisely the same r\^ole played by 
the `ordinary' Segre classes $s(X,M)$, resp., $s(\cN_XM)$ for $\cfu(X)$, resp., $\cfj(X)$.
We will come back to this term below, see~\eqref{eq:SSM}.

The strength of Theorem~\ref{thm:Sabthm} is that (as Sabbah puts it, \cite[p.~162]{MR804052})
{\em `\dots cela montre que la th\'eorie des classes de Chern de~\cite{MR0361141}
se ram\`ene \`a une th\'eorie de Chow sur $T^\vee M$, qui ne fait intervenir que des
classes fondamentales.'\/} Indeed, the Segre term is determined by the characteristic
cycle $\Ch(\varphi)$; this is a linear combination of $(\dim M-1)$-dimensional
fundamental classes of projectivized conormal spaces. These characteristic cycles 
(and the local
Euler obstruction itself) arise naturally in the theory of holonomic D-modules;
this aspect is also treated in~\cite{MR804052}, as well as in work of Masaki 
Kashiwara, Victor Ginzburg, and others (see e.g.,~\cite{MR647684}, \cite{MR833194}).
The characteristic cycles $\Ch(\varphi)$ are projectivizations of 
{\em Lagrangian\/}\index{Lagrangian cycle}
cycles in $T^\vee M$, and various functoriality properties admit a compelling geometric
description in terms of Lagrangian cycles. Thus, the functor $\Ff$ of constructible 
functions may be replaced by
a `Lagrangian functor' associating with $X$ the group of integer linear combinations
of conormal cycles. See~\cite{MR804052} and~\cite{MR1063344} for more information.

From this point of view, defining a characteristic class for arbitrary varieties that
generalizes the total Chern class of the tangent bundle from the nonsingular case
amounts to identifying ways to define Lagrangian cycles which, in the nonsingular
case, associate a variety with the cycle of its conormal bundle (up to sign). 
We will focus on two specific choices:
\begin{itemize}
\item The conormal {\em space\/} of a (possibly singular) variety $X$, corresponding to
the Chern-Mather\index{Chern!-Mather class} 
class $\cma(X)=c_*(\Eu_X)$ (\S\ref{ss:CMa}); and
\item The `characteristic cycle' of $X$, that is, $\Ch(\one_X)$, corresponding to the
`Chern-Schwartz-MacPherson class' of $X$ (\S\ref{ss:CSMhyp}).
\end{itemize}
One of the challenges will be to find (more) explicit expressions for the corresponding
Segre terms $\Segre_{T^\vee M|_X}(\Ch(\Eu_X))_\vee$,
$\Segre_{T^\vee M|_X}(\Ch(\one_X))_\vee$.

\subsection{Chern-Mather classes}\label{ss:CMa}
A key ingredient in MacPherson's construction of the natural transformation
$c_*$ is the {\em Chern-Mather\/}
class of a variety $X$, $\cma(X)$. MacPherson gives a definition of this class 
in~\cite[\S2]{MR0361141}, attributing it to Mather. We note that the definition of an equivalent notion
was given earlier by Wu Wen-Ts\"un (\cite{MR234962}); the equivalence was proved 
later by Zhou Jianyi (\cite{MR1288398})\footnote{We also note that in his review 
of~\cite{MR116023}, Raoul Bott credits Wu with an approach to the algebraic construction
of characteristic classes similar to and preceding Grothendieck's.}.

Let $X$ be a reduced subscheme of a nonsingular variety $M$ of pure dimension $n$, 
and let $X^\circ$ be the nonsingular part of $X$. Recall
(\S\ref{ss:locEuob}) that the {\em Nash blow-up\/} $\hat X$ of $X$ is the
closure of the image of the natural rational map $X \dashrightarrow \Gr_n(TM)|_X$
associating with a nonsingular $x\in X^\circ$ the tangent space $T_xX^\circ\subseteq T_xM$.
The projection from the Grassmannian restricts to a proper birational map $\nu: \hat X 
\to X$, and the tautological subbundle restricts to a rank-$n$ vector bundle $\hat T$
on $\hat X$ extending the pull-back of $TX^\circ$. The local Euler obstruction $\Eu_X(p)$ 
equals
\[
\int c(\hat T|_{\nu^{-1}(p)})\cap s(\nu^{-1}(p), \hat X)
\]
(Theorem~\ref{thm:GS}). 
Following MacPherson, we define the Chern-Mather class of $X$ to be the 
push-forward of the Chern class of $\hat T$.

\begin{definition}\label{def:cMa}
\index{Chern!-Mather class}
With notation as above, the {\em Chern-Mather class\/} of $X$ is
\begin{equation}\label{eq:cMa}
\cma(X) = \nu_*\left( c(\hat T)\cap [\hat X]\right)\saf,
\end{equation}
an element of $A_*(X)$.
\qede\end{definition}

As we discussed in~\S\ref{ss:DDMp}, we have the following alternative expression for the
Chern-Mather class:
\begin{equation}\label{eq:cmaex}
\cma(X)=c(TM|_X)\cap (-1)^{\dim X} \Segre_{T^\vee M|_X}([\Pbo(N^\vee_XM)])_\vee\saf.
\end{equation}
This is~\eqref{eq:Sabsha} for $\varphi=\Eu_X$, as
$\Ch(\Eu_X)=(-1)^{\dim X} [\Pbo(N^\vee_X M)]$ (see~\eqref{eq:EuT}).
The equivalence of~\eqref{eq:cMa} and~\eqref{eq:cmaex}, due to Sabbah,
may be verified by the same techniques proving Proposition~\ref{prop:Kennedy};
cf.~\cite[Lemma~1]{MR1063344}.

If $X$ is a hypersurface, the Segre term can be expressed directly in terms of 
ordinary Segre classes. Recall that for a rational equivalence class~$A$ of
a subvariety of a fixed ambient variety $M$, we let
\[
A^\vee:=(-1)^{\dim M} A_\vee\saf.
\]

\begin{theorem}\label{thm:MaSe}
Let $X$ be a hypersurface of a nonsingular variety $M$. Then
\[
(-1)^{\dim X}\Segre_{T^\vee M|_X}([\Pbo(N^\vee_XM)])_\vee = 
\left([X]+\iota_* s(JX,X)^\vee\right)\otimes_M \cO(X)\saf.
\] 
\end{theorem}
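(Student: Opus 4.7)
The strategy is to realize the conormal space $\Pbo(N^\vee_X M)$ as the image of a birational proper morphism from the blow-up $B\ell_{JX} X$, compute the Segre operator using the projection formula and \eqref{eq:SegfromE}, and then convert via the tensor notation $\otimes_M$ to reach the stated form.

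The differential $ds$ of a defining section $s$ of $\cO(X)$ is a section of $E := T^\vee M \otimes \cO(X)$ whose zero scheme is $JX$; equivalently, $ds$ gives a sheaf map $\sigma\colon \cO(-X)|_X \to T^\vee M|_X$ that is injective on $X^\circ := X\smallsetminus JX$ with image the conormal line bundle $N^\vee_{X^\circ} M$. This defines a rational map $X \dashrightarrow \Pbo(T^\vee M|_X)$ whose image closure is precisely $\Pbo(N^\vee_X M)$. Blowing up $JX$ in $X$ resolves this map: on $X' := B\ell_{JX} X$, with exceptional divisor $\mathcal E$ and structure map $\pi_0\colon X'\to X$, the pulled-back section $\pi_0^* \sigma$ factors as $s_\mathcal E\cdot\tilde\sigma$, where $s_\mathcal E$ cuts out $\mathcal E$ and $\tilde\sigma$ exhibits $\cO(\mathcal E-\pi_0^*X)$ as a line subbundle of $\pi_0^* T^\vee M|_X$. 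This yields a proper morphism $\phi\colon X'\to \Pbo(T^\vee M|_X)$ which restricts to an isomorphism over $X^\circ$ and is therefore birational onto $\Pbo(N^\vee_X M)$, so $\phi_*[X'] = [\Pbo(N^\vee_X M)]$; moreover $\phi^*\cO_{T^\vee M}(-1) = \cO(\mathcal E - \pi_0^* X)$.

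The cleanest intermediate step uses the identification $\Pbo(E) = \Pbo(T^\vee M)$ with $\cO_E(1) = \cO_{T^\vee M}(1)\otimes\cO(-X)$, giving $\phi^* c_1(\cO_E(1)) = -\mathcal E$. The projection formula then yields
\[
\Segre_{E|_X}([\Pbo(N^\vee_X M)]) = \pi_{0,*}\left(\sum_{i\ge 0}(-\mathcal E)^i\cap[X']\right) = [X] - \iota_* s(JX,X),
\]
the second equality being \eqref{eq:SegfromE} applied to $\pi_0$. I next establish the general identity
\[
\Segre_{F\otimes L}(G) = \Segre_F(G)\otimes_M L
\]
for any vector bundle $F$, line bundle $L$, and class $G\in A_*(\Pbo(F|_X))$: expanding $c_1(\cO_{F\otimes L}(1)) = c_1(\cO_F(1))-p^*c_1(L)$ and applying the projection formula shows that the codimension-$(k+1)$-in-$M$ component $T_k = p_*(c_1(\cO_F(1))^k\cap G)$ gets divided by $c(L)^{k+1}$, matching exactly the effect of $\otimes_M L$. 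Applying this with $F = T^\vee M$, $L = \cO(X)$, and inverting via \eqref{eq:notpr2}, I obtain
\[
\Segre_{T^\vee M|_X}([\Pbo(N^\vee_X M)]) = \left([X] - \iota_* s(JX,X)\right)\otimes_M\cO(-X).
\]

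Finally, I apply $\vee$ to both sides and multiply by $(-1)^{\dim X}$. A direct unpacking of the definition of $\otimes_M$ gives the compatibility $(\alpha\otimes_M L)_\vee = \alpha_\vee\otimes_M L^{-1}$; together with $[X]_\vee = (-1)^{\dim X}[X]$ and $(\iota_* s(JX,X))_\vee = (-1)^{\dim M}\iota_* s(JX,X)^\vee$, the right-hand side becomes $(-1)^{\dim X}\bigl([X]+\iota_* s(JX,X)^\vee\bigr)\otimes_M\cO(X)$, and multiplication by $(-1)^{\dim X}$ produces the claimed identity. The main obstacle is the careful bookkeeping of signs through the combined effect of the $\vee$ involution and the $\otimes_M$ operation; the geometric content is essentially a single blow-up, with everything else reducing to the projection formula, \eqref{eq:SegfromE}, and the basic manipulations of $\otimes_M$.
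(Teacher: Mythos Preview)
Your proof is correct and follows essentially the same route as the paper's: realize $\Pbo(N^\vee_X M)$ as $B\ell_{JX}X$, compute the Segre operator via the exceptional divisor and~\eqref{eq:SegfromE}, and pass between $\cO_{T^\vee M}(1)$ and $\cO_E(1)$ using the twist by $\cO(X)$ and the $\otimes_M$ calculus (the paper absorbs the sign and $\vee$ at the outset, you at the end, but the content is identical). One small caveat: your ``general identity'' $\Segre_{F\otimes L}(G)=\Segre_F(G)\otimes_M L$ is stated too broadly---it holds only when $G$ is pure of dimension $\dim M-1$, so that $p_*(c_1(\cO_F(1))^k\cap G)$ genuinely has codimension $k+1$ in $M$; this hypothesis is met here since $\dim\Pbo(N^\vee_X M)=\dim X=\dim M-1$, so the application is fine.
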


(Cf.~\cite[Lemma~I.2]{MR2001i:14009} and~\cite[Proposition~2.2]{MR2020555}.) 
In this statement, $JX$ is the singularity subscheme of $X$ (Definition~\ref{def:JX}),
$\iota\colon JX\to X$ is the embedding, and we use the notation $\otimes_M$ 
recalled in~\S\ref{ss:resanot}.

\begin{proof}
The left-hand side of the stated formula equals
\begin{equation}\label{eq:SegCma}
\pi_*\left(c(\cO_{T^\vee M|_X}(1))^{-1}\cap [\Pbo(N^\vee_X M)]\right)\saf,
\end{equation}
where $\pi\colon \Pbo(T^\vee M|_X)\to X$ is the projection. As $X$ is a hypersurface, 
the projectivized conormal space $\Pbo(N^\vee_X M)$ may be realized as the 
closure of the image of the rational map
\[
X \dashrightarrow \Pbo((T^\vee M\otimes \cO(X))|_X)\cong \Pbo(T^\vee M|_X)
\]
associating with every $x\in X^\circ$ the hyperplane $T_x X^\circ$ of $T_xM$,
viewed as a point of~$\Pbo(N^\vee_x M)$. This
closure is isomorphic to the blow-up of $X$ along the base scheme of the rational
map, and the base scheme is $JX$ by definition. For another point of 
view on this observation, recall that the Nash blow-up of a hypersurface $X$ is 
isomorphic to its blow-up along $JX$, see e.g., \cite[Remark~2]{MR409462}; 
for hypersurfaces, the conormal space is isomorphic to the Nash blow-up. 
Now we have
\begin{equation}\label{eq:O1}
\cO_{T^\vee M|_X}(1) \cong \cO_{(T^\vee M\otimes \cO(X))|_X}(1)\otimes \pi^*\cO(X)\saf;
\end{equation}
therefore~\eqref{eq:SegCma} may be rewritten
\[
\pi_*\left(c(\cO_{(T^\vee M\otimes\cO(X))|_X}(1)\otimes\pi^*\cO(X))^{-1}\cap
 [B\ell_{JX}X]\right)\saf,
\]
and as $\cO_{(T^\vee M\otimes\cO(X))|_X}(1)$ restricts to $c(\cO(-E))$ on the
blow-up, where $E$ denotes the exceptional divisor, this class equals
\[
\pi_*\left(c(\pi^*\cO(X)\otimes \cO(-E))^{-1}\cap [B\ell_{JX}X]\right)
=\pi_*\left(c(\cO(-E))^{-1}\cap [B\ell_{JX}X]\right)\otimes_M\cO(X)
\saf,
\]
where now $\pi$ denotes the projection from the blow-up and we made use
of~\eqref{eq:notpr2} and of the projection formula. 
This last expression equals the
right-hand side of the formula given in the statement, by~\eqref{eq:SegfromE}.
\end{proof}

\begin{remark}\label{rem:pertuMa}
To parse the expression obtained in Theorem~\ref{thm:MaSe}, note that as $X$~is
a hypersurface, 
\begin{align*}
\cfu(X) &=c(TM|_X)\cap s(X,M) = c(TM|_X)\cap (1+X)^{-1}\cap [X] \\
&= c(TM|_X)\cap \left([X] \otimes_M \cO(X)\right)\saf,
\end{align*}
while \eqref{eq:cmaex} and Theorem~\ref{thm:MaSe} imply that
\begin{equation}\label{eq:cmaexp}
\cma(X) =c(TM|_X)\cap \left(
\left([X]+\iota_* s(JX,X)^\vee\right)
\otimes_M \cO(X)\right)\saf.
\end{equation}

What this is saying is that the Chern-Mather class of a hypersurface 
$X$ is the Chern-Fulton class of a virtual object whose fundamental class is
\begin{equation}\label{eq:fdperMa}
[X]+\iota_* s(JX,X)^\vee\saf,
\end{equation}
a perturbation of the fundamental class of $X$, determined by the Segre class of
the singularity subscheme of $X$ {\em in $X$.\/}

Enforcing the analogy with the Chern-Fulton class, we could formally write
\[
\cma(X) = c(TM|_X)\cap s_\text{Ma}(X,M)\saf,
\]
for a `Segre-Mather\index{Segre!-Mather class} 
class' $s_\text{Ma}(X,M)$. Thus $s_\text{Ma}(X,M)=s(X,M)$ if 
both $X$ and~$M$ are nonsingular, and Theorem~\ref{thm:MaSe} gives
an explicit expression for the Segre-Mather class if $X$ is a hypersurface
in a nonsingular variety $M$.

We do not know a similarly explicit expression of the Segre-Mather class for more
general varieties~$X$.
\qede\end{remark}

As there are implementations for the computation of Segre classes (see~\S\ref{ss:Segpro}),
Chern-Mather classes of hypersurfaces in e.g., nonsingular projective varieties can also 
be computed by making use of~\eqref{eq:cmaexp}. See~\cite{MR3608229} for concrete 
examples.

\begin{remark}\label{rem:Pieneremark}
We note that the relation between the Segre class of the singularity subscheme of a 
hypersurface~$X$ {\em of projective space\/} and the Chern-Mather class of~$X$ may 
also be obtained as a corollary of results of Piene: the 
{\em polar\index{polar!cycles and classes} 
classes\/} of a 
hypersurface $X\subseteq \Pbb^n$ can be computed in terms of the Segre class $s(JX,X)$ 
(\cite[Theorem~2.3]{Pienepolarclasses}) and the Chern-Mather class may be expressed
in terms of polar classes (\cite[Th\'eor\`eme~3]{MR1074588}).

In fact, for projective varieties, the fact that~\eqref{eq:cmaexp} only holds for hypersurfaces 
is tempered by another result of Piene, \cite[Corollaire, p.~20]{MR1074588}, showing that 
Chern-Mather classes are preserved by general projections. Thus, the computation of the
degrees of the components of the Chern-Mather class of a projective variety may be
reduced to the hypersurface case. 
\qede\end{remark}

In any case, it would be interesting to extend Theorem~\ref{thm:MaSe} beyond the 
hypersurface case. It is conceivable that even if $X$ is not a hypersurface, the Segre 
term in~\eqref{eq:cmaex} may admit an equally transparent expression in terms of the 
Segre class of a scheme naturally associated with the singularities of $X$. 

\begin{example}\label{ex:hyparr2}
It follows easily from the definition that if $X=X_1\cup X_2$ is the union of two closed
reduced
subschemes of the same pure dimension
and with no irreducible components in common,
then $\cma(X)=\cma(X_1)+\cma(X_2)$ (where the classes on the right-hand side are
viewed as classes in $A_*(X)$). Indeed, the Nash blow-up of $X$ is simply the union of 
the Nash blow-ups of $X_1$ and $X_2$. (This also implies that $\Eu_X=\Eu_{X_1}
+\Eu_{X_2}$; cf.~\cite[p.~426]{MR0361141}.)

For a hyperplane 
arrangement\index{hyperplane arrangement}
$\cA$ consisting of $d$ distinct hyperplanes $H_i$ in 
$\Pbb^n$, this implies that the Chern-Mather class of the corresponding hypersurface 
$A$ is
\[
\cma(A)=\sum_i \cma(H_i) = \sum_i c(TH_i)\cap [H_i]
\]
and therefore if $i:A\to \Pbb^n$ is the embedding, and $H$ denotes the hyperplane class,
\[
i_*\cma(A)=d\cdot (1+H)^n\cap [\Pbb^{n-1}]\saf.
\]
Let's verify that this is compatible with the formula~\eqref{eq:Segfha} for $s(JA,A)$ 
obtained in~\S\ref{ss:hyparr}:
\begin{align*}
\iota_* s(JA,A) & = d \sum_{i=2}^n (-1)^i (d-1)^{i-1} [\Pbb^{n-i}] \\
&= d \left((d-1) [\Pbb^{n-2}]\otimes_A \cO((d-1) H)\right) \\
&= d (d-1) (1+(d-1)H)\cap\left([\Pbb^{n-2}]\otimes_{\Pbb^n} \cO((d-1) H)\right)\saf,
\end{align*}
where we have used the notation in~\S\ref{ss:resanot}. It follows that the
`perturbed fundamental class' \eqref{eq:fdperMa} is
\begin{multline*}
[A]+\iota_* s(JA,A)^\vee \\
= d\left([\Pbb^{n-1}]+(d-1)(1-(d-1)H)\cap \left([\Pbb^{n-2}]\otimes_{\Pbb^n} \cO(-(d-1) H)\right)\right)
\end{multline*}
and therefore the push-forward of the Segre-Mather class to $\Pbb^n$ equals
(using~\eqref{eq:notpr1} and~\eqref{eq:notpr2})
\begin{align*}
&i_*\left(([A]+\iota_*  s(JA,A)^\vee)\otimes_{\Pbb^n} \cO(A)\right) \\
&= d\left([\Pbb^{n-1}]+(d-1) (1-(d-1)H)\cap \left([\Pbb^{n-2}]\otimes_{\Pbb^n} \cO(-(d-1) H)
\right)\right)\otimes_{\Pbb^n} \cO(dH) \\
&=d\left([\Pbb^{n-1}]\otimes \cO(dH) + (d-1)\frac{1+H}{1+dH}\cap ([\Pbb^{n-2}]
\otimes_{\Pbb^n} \cO(H))\right) \\
&=d\left( \frac 1{1+dH} + (d-1)\frac {1+H}{1+dH} \frac{H}{(1+H)^2}\right) \cap [\Pbb^{n-1}] \\
&= d\cdot (1+H)^{-1}\cap [\Pbb^{n-1}]\saf.
\end{align*}
In conclusion,
\begin{align*}
i_* \left(c(T\Pbb^n|_A)\cap s_\text{Ma}(A,\Pbb^n)\right)
&=d\cdot (1+H)^{n+1} (1+H)^{-1} \cap [\Pbb^{n-1}] \\
&=d\cdot (1+H)^n\cap [\Pbb^{n-1}]
\end{align*}
as it should.\smallskip

More generally, let $X = \cup_{i=1}^r X_i$ be the union of $r$ distinct irreducible 
(possibly singular) hypersurfaces in a nonsingular variety $M$. Denote by $X_{- i}$ 
the union of the hypersurfaces other than~$X_i$. Then, omitting evident push-forwards:
\begin{equation}\label{eq:JXXunion}
s(JX,X) = \sum_i X_i\cdot s(X_{-i},M) + s(JX_i, X_i)\otimes_M \cO(X_{-i})\saf.
\end{equation}
This may be proved by the same technique used in the proof of Proposition~\ref{prop:Schyar},
using~\eqref{eq:ressc} (that is, `residual intersection') to account for the singularity
subschemes of the individual components $X_i$. The reader should have no difficulty
verifying that~\eqref{eq:JXXunion} is compatible with the fact that $\cma(X) = 
\sum_i \cma(X_i)$.
\qede\end{example}

\subsection{Chern-Schwartz-MacPherson classes of hypersurfaces}\label{ss:CSMhyp}
\index{Chern!-Schwartz-MacPherson (CSM) class}
Again all our schemes will be subschemes of a fixed nonsingular variety $M$, and we
work in characteristic~$0$. We do not need to assume that schemes are reduced or 
pure-dimensional.

Choosing the function $\one_X$ for every scheme is trivially the simplest way to 
define a constructible function generalizing $\one_V$ for nonsingular varieties $V$.
Thus, this defines a characteristic class trivially generalizing $c(TV)\cap [V]$.

\begin{definition}\label{def:CSMclass}
Let $X$ be a scheme as above. The {\em Chern-Schwartz-MacPherson (CSM) class\/} of $X$ 
is the class
\[
\csm(X):= c_*(\one_X)\in A_*(X)\saf.
\]
\end{definition}
More generally (abusing language) we let
\[
\csm(W):= c_*(\one_W)\in A_*(X)
\]
for any constructible subset $W$ of $X$; the context will determine the Chow group where 
$\csm(W)$ is meant to be taken. Note that as $\one_W$ only depends on the support $W_\text{red}$
of $W$, we have $\csm(W)=\csm(W_\text{red})$. (Cf.~Remark~\ref{rem:nonred} below for relevant
comments on this point.)

Definition~\ref{def:CSMclass} is given in~\cite{MR0361141} (for compact complex varieties,
and in homology); MacPherson attributes it to Deligne. 
In~\cite{MR83h:32011}, Brasselet and Marie-H\'el\`ene Schwartz proved that
the class agrees via Alexander duality with the classes defined earlier by Schwartz in
relative cohomology (\cite{Schwartz1,Schwartz2}). 

One way to compute $\csm(X)$ is to express the constant function $\one_X$ as a
linear combination of local Euler obstructions:
\[
\one_X = \sum_i m_i \Eu_{W_i}
\]
for a choice of finitely many subvarieties $W_i$ of $X$. It then follows that
\[
\csm(X) = c_*(\one_X) = \sum_i m_i c_*(\Eu_{W_i}) = \sum_i m_i \cma(W_i)\saf.
\]
The proof in~\cite{MR83h:32011} relies on establishing precise relations between
indices of radial vector fields and local Euler obstructions, and hence between
Schwartz's classes and Chern-Mather classes. It is also possible to prove that
the classes defined by Schwartz satisfy enough of the functoriality properties of
the classes defined by MacPherson to guarantee that they must agree
(\cite{MR2415339}); this approach avoids the use of local Euler obstructions or
Chern-Mather classes.

One motivation in Schwartz's work was to obtain a class generalizing the classical
Poincar\'e-Hopf theorem to singular varieties. This incorporated in MacPherson's
approach as an implication of the naturality of $c_*$. Assume that $X$ is complete, 
so that the constant map 
$\kappa\colon X\to pt=\Spec k$ is proper. The fact that $c_*$ is a natural transformation
implies that the following diagram is commutative:
\[
\xymatrix@C=10pt{
\Ff(X) \ar[d]_{\kappa_*} \ar[rr]^-{c_*} & & A_*(X) \ar[d]^{\kappa_*} \\
\Ff(pt) \ar@{=}[r] & \Zbb \ar@{=}[r] & A_*(pt)
}
\]
If $W\subseteq X$ is any constructible subset, the commutativity of the
diagram
\[
\xymatrix{
\one_W \ar@{|->}[d]_{\kappa_*} \ar@{|->}[r] & \csm(W) \ar@{|->}[d]^{\kappa_*} \\
\chi(W) \ar@{|->}[r] & \int \csm(W)
}
\]
amounts to the equality
\begin{equation}\label{eq:PH}
\int \csm(W) = \chi(W)\saf\colon
\end{equation}
the degree of the CSM class
equals the topological Euler characteristic (or a suitable generalization over
fields other than~$\Cbb$). This can be viewed as an extension to
possibly singular, possibly noncompact varieties of the Poincar\'e-Hopf
theorem, holding over arbitrary algebraically closed fields of characteristic~$0$.

By Theorem~\ref{thm:Sabthm}, we have
\[
\csm(X)=c(TM|_X)\cap \Segre_{T^\vee M|_X}(\Ch(\one_X))_\vee\saf.
\]
Just as in~\S\ref{ss:CMa}, it is natural to ask for a more explicit and computable
expression for the Segre term
\begin{equation}\label{eq:SSM}
s_\text{SM}(X,M):=\Segre_{T^\vee M|_X}(\Ch(\one_X))_\vee\saf,
\end{equation}
which we view as a `Segre-Schwartz-MacPherson' class.
\index{Segre!-Schwartz-MacPherson class}
In~\S\ref{ss:CSMgen} we will argue that this task can be reduced to the case of
hypersurfaces; in this section we focus on the hypersurface case. The following 
result is the CSM version of Theorem~\ref{thm:MaSe}.

\begin{theorem}\label{thm:CSMSe}
Let $X$ be a hypersurface in a nonsingular variety $M$. Then
\[
\Segre_{T^\vee M|_X}(\Ch(\one_X))_\vee =
\left([X]+\iota_* (c(\cO(X)\cap s(JX,M)))^\vee\right)\otimes_M \cO(X)\saf.
\]
\end{theorem}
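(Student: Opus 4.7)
The natural first move is to apply $c(TM|_X)\cap-$ to both sides: by Theorem~\ref{thm:Sabthm}, the left-hand side becomes $\csm(X)$, while the term $c(TM|_X)\cap([X]\otimes_M\cO(X))$ on the right equals $\cvir(X)=\cfu(X)$ (as noted in Remark~\ref{rem:pertuMa}, since $X$ is lci). Hence the claim is equivalent to the identity
\[
\csm(X)-\cvir(X)=c(TM|_X)\cap \iota_*\bigl((c(\cO(X))\cap s(JX,M))^\vee\otimes_M\cO(X)\bigr),
\]
which one should recognize as a `Milnor class' formula for the hypersurface $X$, expressed through the Segre class of the singularity subscheme in the \emph{ambient} space $M$. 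Writing $\one_X=\Eu_X+(\one_X-\Eu_X)$, so that $\csm(X)=\cma(X)+c_*(\one_X-\Eu_X)$, and using the formula~\eqref{eq:cmaexp} for $\cma(X)$ established in Theorem~\ref{thm:MaSe} (where $s(JX,X)$ appears), reduces the task to identifying the `excess' class $c_*(\one_X-\Eu_X)$, which is supported on $JX$, with the part of the claimed formula that distinguishes $s(JX,M)$ from $s(JX,X)$.

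For the actual computation I would follow the blow-up strategy that worked in the proof of Theorem~\ref{thm:MaSe}, but now carried out on $\tilde M:=B\ell_{JX}M$ rather than on $B\ell_{JX}X$. The section $df\in\Gamma(M,T^\vee M\otimes\cO(X))$ defining $JX$ lifts on $\tilde M$ to a nowhere-vanishing section of $\pi^*(T^\vee M\otimes\cO(X))\otimes\cO(-E)$, where $E$ is the exceptional divisor; this section resolves the rational `Gauss map' $M\dashrightarrow\Pbo(T^\vee M\otimes\cO(X))$ whose image over $X$ is the projectivized conormal space $\Pbo(N^\vee_XM)$. Via the isomorphism~\eqref{eq:O1}, this whole picture embeds in $\Pbo(T^\vee M|_X)$ and exhibits the relevant cycles as Cartier-divisor classes on~$\tilde M$. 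Using~\eqref{eq:SegfromE} to express $s(JX,M)$ as $\pi_*(E-E^2+\cdots)$, together with the projection formula, the tensor identities~\eqref{eq:notpr1}-\eqref{eq:notpr2}, and the residual formula \eqref{eq:ressc} of~\S\ref{ss:resanot} to separate the `smooth' contribution (which produces the $[X]\otimes_M\cO(X)$ term) from the singular contribution, should collapse the right-hand side of the theorem into $\Segre_{T^\vee M|_X}(\Ch(\one_X))_\vee$.

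The main obstacle is identifying $\Ch(\one_X)$ — or equivalently the contribution of $\one_X-\Eu_X$ — explicitly enough that its Segre operator can be matched with the Segre-class expression on the right. Two routes are plausible: (a) a direct combinatorial expansion of $\one_X-\Eu_X$ as an integer combination $\sum_{W\subseteq JX}n_W\,\Eu_W$ and a component-by-component verification, which is cumbersome and hides the geometry; or (b) a Verdier-specialization argument in a pencil of hypersurfaces linearly equivalent to $X$ with generic smooth member, comparing $\csm$ of the smooth fiber (which gives exactly $\cvir(X)$ after push-forward and specialization, in view of Proposition~\ref{prop:cvirgen} and the compatibility of $c_*$ with specialization) with $\csm(X)$. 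Route (b) is cleaner conceptually but requires smoothability. The blow-up computation sketched above has the advantage of being algebraic and working in arbitrary characteristic~$0$, with the appearance of the factor $c(\cO(X))$ (the normal-bundle correction converting $s(JX,X)$ to a class involving $s(JX,M)$) encoded in the twist~\eqref{eq:O1}; this, I expect, is the crux of the argument.
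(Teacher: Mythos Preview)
Your setup is on the right track—working in $\tilde M=B\ell_{JX}M$ with the resolved Gauss map is exactly the geometry the paper exploits—but the proposal has a genuine gap at the step you yourself flag as ``the main obstacle'': you never identify $\Ch(\one_X)$, and neither of your proposed routes~(a) or~(b) actually does so. The blow-up paragraph only recovers $\Pbo(N^\vee_XM)$, i.e., the cycle underlying $\Ch(\Eu_X)$; it does not tell you what cycle corresponds to $\one_X$.

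The paper closes this gap via Lemma~\ref{lem:charcy}: with $\cX=\pi^{-1}(X)$ and $\cE=\pi^{-1}(JX)$ the total and exceptional inverse images in $B\ell_{JX}M$, both viewed as cycles in $\Pbo(T^\vee M|_X)$, one has
\[
\Ch(\one_X)=(-1)^{\dim X}\bigl([\cX]-[\cE]\bigr)\saf.
\]
This is a nontrivial result (due to Parusi\'nski--Pragacz, \cite[Corollary~2.4]{MR2002g:14005}), and it is the crux: once it is available, applying $\Segre_{T^\vee M|_X}$ to $[\cX]-[\cE]$ is a direct parallel of the proof of Theorem~\ref{thm:MaSe}, and the tensor manipulations you outline do produce the stated formula. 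Note that this identification does \emph{not} pass through the decomposition $\one_X=\Eu_X+(\one_X-\Eu_X)$ you propose; it gives the characteristic cycle of $\one_X$ directly as a difference of divisor classes on the blow-up of $M$.

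Your route~(b) requires a smoothing of $X$ in its linear series, which need not exist. The paper does sketch an alternative argument avoiding any identification of $\Ch(\one_X)$: one verifies that the right-hand side (equivalently, the class~\eqref{eq:CSMpur}) is invariant under blow-ups along nonsingular subvarieties of $JX$, then uses embedded resolution to reduce to the normal-crossings case, which is checked directly. This is closer in spirit to your~(b) but works without smoothability hypotheses. Route~(a) is not practical in general.
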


This is~\cite[Lemma~I.3]{MR2001i:14009}; cf.~\cite[Proposition~2.2]{MR2020555}.
It can be interpreted as stating that if $X$ is a hypersurface of a nonsingular variety
$M$, then the Chern-Schwartz-MacPherson class of $X$ is the Chern-Fulton class
of an object whose `fundamental class' is
\begin{equation}\label{eq:fdperCSM}
[X]+ \iota_* (c(\cO(X))\cap s(JX,M))^\vee\saf.
\end{equation}

\begin{remark}\label{rem:sigdif}
The reader should compare \eqref{eq:fdperMa} and~\eqref{eq:fdperCSM}, that is,
the perturbations of the fundamental class corresponding to the different
characteristic classes we have encountered, in the case of hypersurfaces:
\begin{alignat*}{2}
&\text{Chern-Fulton:} &\qquad &[X] \\
&\text{Chern-Mather:} &\qquad &[X] + \iota_* s(JX,X)^\vee \\
&\text{Chern-Schwartz-MacPherson:} &\qquad 
&[X] +\iota_* (c(\cO(X))\cap s(JX,M))^\vee\saf.
\end{alignat*}
The difference between the Chern-Mather class and the Chern-Schwartz-MacPherson
class is captured precisely by the difference between
\[
s(JX,X)\quad\text{and}\quad c(\cO(X))\cap s(JX,M)\saf.
\]
As we have observed in Example~\ref{ex:unf}, it is natural to compare the classes
$s(W,X)$ and $c(\cO(X))\cap s(W,M)$, for any subscheme $W$ of a hypersurface $X$. 
The case $W=JX$ provides one instance in which the difference has a transparent 
and interesting interpretation.
\qede\end{remark}

Different proofs are known for Theorem~\ref{thm:CSMSe}. One approach consists of
proving that the class
\begin{equation}\label{eq:CSMpur}
c(TM|_X)\cap \left(
\left([X]+ \iota_* (c(\cO(X)\cap s(JX,M)))^\vee\right)\otimes_M \cO(X)
\right)
\end{equation}
has the same behavior under blow-ups along nonsingular subvarieties of $JX$
as the class $\csm(X)$. By resolution of singularities, we may then
reduce to the case in which $X$ is a divisor with normal crossings and 
nonsingular components, and in this case one can verify that~\eqref{eq:CSMpur}
does equal $\csm(X)$. It follows that~\eqref{eq:CSMpur} must equal
$\csm(X)$ in general. This approach is carried out in~\cite{MR2001i:14009}.

A perhaps more insightful argument consists of a concrete realization of the characteristic
cycle $\Ch(\one_X)$. For this, view the singularity subscheme $JX$ of $X$ as a
subscheme {\em of~$M$.\/} Consider the blow-up
\[
\pi\colon B\ell_{JX}M \to M
\]
of $M$ along $JX$. This is naturally embedded as a subscheme of $\Pbo(\cP^1_M(\cO(X)))$, 
the projectivization of the {\em bundle of principal parts\/} of $\cO(X)$. The inverse image
$\cX:=\pi^{-1}(X)$ is contained in $\Pbo((T^\vee M\otimes \cO(X))|_X) \subseteq
\Pbo((\cP^1_M(\cO(X)))|_X)$, and contains the exceptional divisor $\cE=\pi^{-1}(JX)$ of the 
blow-up. Thus, we have $(\dim M-1)$-dimensional cycles $[\cX]$, $[\cE]$ of
$\Pbo((T^\vee M\otimes \cO(X))|_X)\cong \Pbo(T^\vee M|_X)$.

The reader may find it helpful to recall that $B\ell_{JX}X$ may also be realized as
a subscheme of $\Pbo(T^\vee M|_X)$; the proof of Theorem~\ref{thm:MaSe} relies on the 
identification of this subscheme with the projectivized conormal space 
$\Pbo(N^\vee_XM)$, whose cycle is $(-1)^{\dim X} \Ch(\Eu_X)$. 

\begin{lemma}\label{lem:charcy}
The characteristic cycle $\Ch(\one_X)$ equals $(-1)^{\dim X} ([\cX]-[\cE])$.
\index{characteristic cycle!of a hypersurface}
\end{lemma}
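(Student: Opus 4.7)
The plan is to establish the identification as Lagrangian cycles in $\Pbo(T^\vee M|_X)$ by decomposing both sides into projectivized conormal cycles and matching multiplicities. First I would unpack the embedding: the defining section $s$ of $\cO(X)$ induces a section $j^1 s$ of the bundle of principal parts $\cP^1_M(\cO(X))$ whose zero scheme is exactly $JX$, and $B\ell_{JX}M$ embeds in $\Pbo(\cP^1_M(\cO(X)))$ as the closure of the graph of $[j^1 s]$. Since $s$ vanishes on $X$, the restriction $j^1 s|_X$ factors through $T^\vee M\otimes \cO(X)$, so $\cX = \pi^{-1}(X)$ lies in $\Pbo((T^\vee M\otimes\cO(X))|_X)$, identified with $\Pbo(T^\vee M|_X)$ via~\eqref{eq:O1}. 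Over $X\smallsetminus JX$ this map sends $p$ to the conormal line of $X$ at $p$, so the strict transform of $X$ in $B\ell_{JX}M$ coincides with the projectivized conormal space $\Pbo(N^\vee_X M)$, exactly as in the proof of Theorem~\ref{thm:MaSe}.

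Next I would argue that the pure $(\dim M-1)$-dimensional cycle $(-1)^{\dim X}([\cX]-[\cE])$ is Lagrangian. The cycle $[\cX]$ splits as $[\Pbo(N^\vee_X M)]$ plus components supported on $\cE$; every irreducible component of $\cE$ is conic over its image in $X$ and lies in $\Pbo(T^\vee M|_X)$, so by the general structure theorem for conic Lagrangian cycles it must equal the projectivized conormal cycle $[\Pbo(N^\vee_W M)]$ of some subvariety $W\subseteq JX$. Therefore there are uniquely determined integers $\{m_W\}$ such~that
\begin{equation*}
(-1)^{\dim X}([\cX]-[\cE]) \;=\; \Ch\!\left(\Eu_X + \sum_{W\subseteq JX} m_W\, \Eu_W\right),
\end{equation*}
and the lemma reduces to showing $\Eu_X + \sum_W m_W\Eu_W = \one_X$.

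By uniqueness of the expansion of a constructible function in local Euler obstructions, this identity can be verified at a general point $p$ of each stratum of $X$. For $p\in X\smallsetminus JX$, $\cE$ is empty near $\pi^{-1}(p)$ and $\cX$ meets the fiber transversely in one point, so the left side evaluates to $\Eu_X(p)=1=\one_X(p)$. The hard part — and the main obstacle — is the computation over $JX$: one must show that the multiplicities $m_W$ introduced by the components of $\cE$ conspire to cancel the discrepancy $\Eu_X-\one_X$ on $JX$. I would handle this by reduction to the simple normal crossings case via a log resolution $\tau:\Til M\to M$ of $(M,X\cup JX)$ with centers over $JX$. On $\Til M$ the total transform of $X$ becomes a divisor $\sum m_i E_i$ with smooth components in normal crossing, for which the characteristic cycle of $\one_X$ is a known explicit combination of conormal cycles of the intersections $E_{i_1}\cap\cdots\cap E_{i_k}$, and the blow-up diagram $\cE\subseteq\cX$ admits an equally explicit description compatible with $\tau$; the identity then follows by pushing forward the resulting formula, using the projection formula together with the naturality of $c_*$ (Theorem~\ref{thm:MacP}) to translate the equality of characteristic cycles on $\Til M$ into the desired equality on $M$.
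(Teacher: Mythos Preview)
The paper itself does not prove this lemma: it refers the reader to \cite[Corollary~2.4]{MR2002g:14005} (Parusi\'nski--Pragacz) for the proof, noting also the earlier description of the characteristic \emph{variety} in \cite[Theorem~3.3]{MR84g:32018}. So there is no in-paper argument to compare against; I can only assess whether your outline stands on its own.

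There is a genuine gap at the step where you claim the cycle is Lagrangian. You write that each irreducible component of $\cE$ ``is conic over its image in $X$ and lies in $\Pbo(T^\vee M|_X)$, so by the general structure theorem for conic Lagrangian cycles it must equal'' a projectivized conormal. But being conic is automatic for any subvariety of a projectivized bundle; the structure theorem only tells you that conic \emph{Lagrangian} subvarieties of $T^\vee M$ are conormal varieties, and you have not established the Lagrangian (even isotropic) condition. That the components of the normal cone $C_{JX}M$, embedded in $T^\vee M$, are Lagrangian is true, but it is a nontrivial fact that depends on $JX$ being the zero-scheme of an exact (or closed) $1$-form---locally, $df$. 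This is exactly the substantive content of the lemma, and it is what the cited references actually prove; asserting it is circular.

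Your resolution paragraph also does not close the argument as written. The cycles $\cX$ and $\cE$ live in $B\ell_{JX}M$, which is not the same birational model as a log resolution $\Til M$ of $(M,X)$; there is no evident morphism relating them that would let you ``push forward'' a description of $\cE\subseteq\cX$ from $\Til M$. And invoking the naturality of $c_*$ yields an equality in $A_*(M)$, not an equality of characteristic cycles; to promote it you would need to note that $c_*$ is injective on constructible functions (since $\cma(W)$ has leading term $[W]$), which you have not done. Even granting that, you would then be deducing Lemma~\ref{lem:charcy} from Theorem~\ref{thm:CSMpur2}, whereas in the paper the logical flow is the reverse: the lemma is the more primitive statement, and Theorem~\ref{thm:CSMSe}/\ref{thm:CSMpur2} is derived from it.
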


This statement implies Theorem~\ref{thm:CSMSe}, by an argument similar to the
proof of Theorem~\ref{thm:MaSe}. (Cf.~e.g., \cite[Theorem~I.3]{MR2001i:14009}.)
Lemma~\ref{lem:charcy} is proved in~\cite[Corollary~2.4]{MR2002g:14005}, along 
with a thorough discussion of characteristic cycles of other constructible functions
naturally associated with a hypersurface.
An earlier description of the characteristic {\em variety\/} of a hypersurface is given 
in~\cite[Theorem~3.3]{MR84g:32018}.\smallskip

Theorem~\ref{thm:CSMSe} is equivalent to the following formula, which we state
as a separate result for ease of reference.

\begin{theorem}\label{thm:CSMpur2}
Let $X$ be a hypersurface in a nonsingular variety $M$. Then
\[
\csm(X)=c(TM|_X)\cap \left(
\left([X]+ \iota_* (c(\cO(X))\cap s(JX,M))^\vee\right)\otimes_M \cO(X)
\right)\saf.
\]
\end{theorem}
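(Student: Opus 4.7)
The plan is to deduce this from the equivalent statement Theorem \ref{thm:CSMSe} via Theorem \ref{thm:Sabthm}. Indeed, specializing Sabbah's formula to $\varphi = \one_X$ gives
\[
\csm(X) = c(TM|_X)\cap \Segre_{T^\vee M|_X}(\Ch(\one_X))_\vee,
\]
so the content of Theorem \ref{thm:CSMpur2} is precisely the computation of the Segre-Schwartz-MacPherson term $s_{\text{SM}}(X,M) = \Segre_{T^\vee M|_X}(\Ch(\one_X))_\vee$ given in Theorem \ref{thm:CSMSe}; I would focus the argument there and recover Theorem \ref{thm:CSMpur2} by capping with $c(TM|_X)$.

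First I would invoke Lemma \ref{lem:charcy}, which realizes $\Ch(\one_X) = (-1)^{\dim X}\bigl([\cX] - [\cE]\bigr)$ as a cycle inside $\Pbo(T^\vee M|_X)$. Here $\pi\colon \widehat M := B\ell_{JX}M \to M$ is the blow-up of the ambient variety along the singularity subscheme, $\cX = \pi^{-1}(X)$, and $\cE = \pi^{-1}(JX)$ is the exceptional divisor; both classes sit inside $\Pbo((T^\vee M \otimes \cO(X))|_X)$ via the bundle of principal parts $\cP^1_M(\cO(X))$, which embeds $\widehat M$ through the natural surjection $\pi^*\cP^1_M(\cO(X)) \twoheadrightarrow \cO(-\cE) \otimes \pi^*\cO(X)$. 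The next step is to compute the Segre operator on $[\cX]$ and $[\cE]$ separately. I would use the twist relation \eqref{eq:O1} together with the identities \eqref{eq:notpr1}--\eqref{eq:notpr2} to trade the $\cO_{T^\vee M|_X}(1)$ appearing in $\Segre_{T^\vee M|_X}$ for the natural $\cO_{(T^\vee M \otimes \cO(X))|_X}(1)$, at the cost of an overall $\otimes_M \cO(X)$ twist --- this is the same mechanism that drives the proof of Theorem \ref{thm:MaSe}. Since $\cE$ is literally the exceptional divisor of $\widehat M \to M$, formula \eqref{eq:SegfromE} converts the $\cE$-contribution into $\iota_*(c(\cO(X))\cap s(JX,M))$, with the Chern-class factor arising from the bookkeeping of the twist. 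For $\cX = \pi^{-1}(X)$, since $X$ is a Cartier divisor $[\cX]$ is simply $\pi^*[X]$, and a direct pushforward of the geometric series in $c_1$ produces the leading $[X]$ term. Applying the sign flip $(-1)^{\dim X}(\cdot)_\vee$ and reassembling the two pieces yields the formula in Theorem \ref{thm:CSMSe}.

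The main obstacle is Lemma \ref{lem:charcy} itself: identifying $\Ch(\one_X)$ concretely as $(-1)^{\dim X}([\cX] - [\cE])$ is the genuine geometric input. It requires expressing $\one_X$ as a finite $\Zbb$-linear combination of local Euler obstructions $\Eu_W$ of subvarieties $W \subseteq X$ (using for instance \cite[Lemma~2]{MR0361141}) and then matching each $[\Pbo(N^\vee_W M)]$ against components of $\cX$ and $\cE$ --- a delicate matter because the total transform $\cX$ need not coincide with the projectivized conormal space of $X$. Once Lemma \ref{lem:charcy} is granted, the remaining manipulation is formal, parallel to the Chern-Mather calculation of Theorem \ref{thm:MaSe}. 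An alternative route, carried out in \cite{MR2001i:14009}, bypasses characteristic cycles entirely: one verifies directly that the class on the right-hand side transforms under blow-ups of $M$ along nonsingular subvarieties of $JX$ in the same way as $\csm(X)$, and then invokes resolution of singularities to reduce to the case in which $X$ is a normal-crossings divisor with nonsingular components, where both sides can be checked by a separate calculation.
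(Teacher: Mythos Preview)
Your proposal is correct and matches the paper's approach exactly: the paper states Theorem~\ref{thm:CSMpur2} as an immediate reformulation of Theorem~\ref{thm:CSMSe}, the equivalence being precisely the instance $\varphi=\one_X$ of Sabbah's formula (Theorem~\ref{thm:Sabthm}), which is what you spell out. Your further sketch of how Theorem~\ref{thm:CSMSe} is established---via Lemma~\ref{lem:charcy} and a computation parallel to the proof of Theorem~\ref{thm:MaSe}, or alternatively via the blow-up/resolution argument of~\cite{MR2001i:14009}---also mirrors the paper's discussion of the two available proofs.
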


\begin{remark}
Xiping Zhang has generalized this result to the equivariant setting, \cite{Zhangsthesis}.
\qede\end{remark}

We have already observed that the formula in Theorem~\ref{thm:CSMpur2} may be viewed 
as expressing $\csm(X)$ as the Chern-Fulton class of a virtual object with a similar
behavior to a hypersurface, but with a fundamental class modified to include
lower dimensional terms. There is a
perhaps more compelling intepretation of this object as a Chern-Fulton class, 
obtained by applying residual intersection as follows.

Recall that the Chern-Fulton class of a scheme is not just determined by its support;
the specific scheme structure affects the class. For a hypersurface $X$ of a nonsingular
variety $M$, we consider the Chern-Fulton class of the scheme obtained by `thickening'
$X$ along its singularity subscheme $JX$: that is, for $k\ge 0$ we consider the scheme 
$X^{(k)}$ whose ideal sheaf in $M$ is
\[
\cI_{X,M} \cdot (\cI_{JX,M})^k\saf.
\]
Thus $X=X^{(0)}$. The residual formula in Proposition~\ref{prop:ressc} yields an
expression for the Segre class of this scheme in $M$. According to~\eqref{eq:ressc},
\[
s(X^{(k)},M)=\big([X]+ c(\cO(-X))\cap s((JX)_k,M)\big)\otimes_M\cO(X)\saf,
\]
where $(JX)_k$ is the subscheme of $M$ defined by the ideal~$(\cI_{JX,M})^k$.
(Thus $(JX)_0=\emptyset$, $(JX)_1=JX$, etc.)
Accordingly, we have an expression for the Chern-Fulton class of~$X^{(k)}$:
\index{Chern!-Fulton class!of a thickening.}
\begin{equation}\label{eq:CFthik}
\cfu(X^{(k)}) = c(TM)\cap \big([X]+ c(\cO(-X))\cap s((JX)_k,M)\big)\otimes_M\cO(X)\saf.
\end{equation}
This expression makes sense for all nonnegative integers $k$, and by definition
\[
\cvir(X) = \cfu(X^{(0)})\saf.
\]
Now we observe that $s((JX)_k,M)$ is determined by $s(JX,M)$ for all $k\ge 0$:
indeed, the component of dimension $\ell$ of this class is given by
\[
s((JX )_k,M)_\ell = k^{\dim M-\ell} s(JX ,M)_\ell\saf.
\]
Indeed, if $\cE$ denotes the exceptional divisor of the blow-up $B\ell_{JX}M$,
then the inverse image of $(JX)_k$ in the blow-up is $k\cE$, so the assertion
follows from~\eqref{eq:SegfromE}.

As a consequence,~\eqref{eq:CFthik} expresses $\cfu(X^{(k)})$ as a {\em polynomial\/}
in $k$, and as such this class can be given a meaning for every integer $k$.

\begin{proposition}\label{prop:csmcf}
Let $X$ be a hypersurface in a nonsingular variety $M$. With notation as above,
\[
\csm(X) = \cfu(X^{(-1)})\saf.
\]
\end{proposition}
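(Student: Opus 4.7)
The plan is to observe that the right-hand side of~\eqref{eq:CFthik} is polynomial in $k$, evaluate it formally at $k=-1$, and match the result with Theorem~\ref{thm:CSMpur2}. The substantive content is already present in that theorem; the proposition amounts to a sign-chase.

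First, I would use the scaling identity $s((JX)_k,M)_\ell = k^{\dim M -\ell}\, s(JX,M)_\ell$ recorded just above the proposition (derived from $\pi^{-1}((JX)_k) = k\cE$ on the blow-up $B\ell_{JX}M$) to confirm that~\eqref{eq:CFthik} really is polynomial in $k$. Setting $k=-1$, the $\ell$-dimensional component of $s((JX)_{-1},M)$ becomes
\begin{equation*}
(-1)^{\dim M-\ell}\, s(JX,M)_\ell \;=\; s(JX,M)^\vee_\ell,
\end{equation*}
by the very definition~\eqref{eq:upvee} of $(-)^\vee$ relative to the ambient $M$. Hence $s((JX)_{-1},M) = s(JX,M)^\vee$, and~\eqref{eq:CFthik} at $k=-1$ reads
\begin{equation*}
\cfu(X^{(-1)}) = c(TM|_X) \cap \Big([X]+ c(\cO(-X))\cap s(JX,M)^\vee\Big)\otimes_M \cO(X).
\end{equation*}

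Comparing with Theorem~\ref{thm:CSMpur2}, the two classes agree once we verify
\begin{equation*}
c(\cO(-X))\cap s(JX,M)^\vee \;=\; \bigl(c(\cO(X))\cap s(JX,M)\bigr)^\vee.
\end{equation*}
This is immediate from the basic compatibility $(c(E)\cap A)_\vee = c(E^\vee)\cap A_\vee$ recorded just before~\eqref{eq:upvee}, applied to the line bundle $\cO(X)$: multiplying both sides by $(-1)^{\dim M}$ converts $_\vee$ into $^\vee$ on both sides, and the fact that $\cO(X)^\vee = \cO(-X)$ supplies the Chern class on the right.

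I anticipate no genuine obstacle; the proposition is pure bookkeeping once Theorem~\ref{thm:CSMpur2} and the scaling relation for $s((JX)_k,M)$ are in hand. The point demanding attention is the sign convention: $(-)^\vee$ is indexed by codimension in the ambient $M$, which is precisely the indexing that makes the factor $(-1)^{\dim M}$ from $k=-1$ cancel cleanly against the one in the definition of $^\vee$. Evaluating $X^{(k)}$ at any other negative integer would not produce a class with an independent interpretation of this kind, which partly explains why $k=-1$ is the distinguished value.
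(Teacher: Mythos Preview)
Your proof is correct and takes essentially the same approach as the paper, which simply states that the proposition ``is of course just a reformulation of Theorem~\ref{thm:CSMpur2}.'' You have supplied the explicit sign-chase that the paper leaves to the reader: setting $k=-1$ in~\eqref{eq:CFthik} via the scaling identity, recognizing the result as $s(JX,M)^\vee$, and using $(c(E)\cap A)_\vee = c(E^\vee)\cap A_\vee$ to match the formula in Theorem~\ref{thm:CSMpur2}.
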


This is of course just a reformulation of Theorem~\eqref{thm:CSMpur2}. It identifies the
Chern-Schwartz-MacPherson class of $X$ with the Chern-Fulton class of a virtual 
(fractional?) scheme obtained from $X$ by simply `removing' its singular locus.
The Segre-Schwartz-MacPherson\index{Segre!-Schwartz-MacPherson class!of a hypersurface} 
class of a hypersurface $X$ in a nonsingular variety $M$ is simply
\[
s_\text{SM}(X,M) = s(X^{(-1)},M)\saf.
\]

\begin{remark}\label{rem:nonred}
There is one case in which the virtual scheme $X^{(-1)}$ is {\em not\/} virtual.
Let $V$ be a {\em nonsingular\/} hypersurface of a nonsingular variety $M$,
and let $X$ be the non-reduced hypersurface whose ideal is the $r$-th power
of the ideal of $V$:
\[
\cI_{X,M} = {\cI_{V,M}}^r\saf.
\]
Then (as the characteristic is $0$), $JX$ has ideal ${\cI_{V,M}}^{r-1}$, hence
$X^{(k)}$ has ideal ${\cI_{V,M}}^{r+k(r-1)}$ for $k\ge 0$. This ideal makes sense
for $k=-1$, giving $X^{(-1)}=V$. Therefore
\[
\csm(X)=\csm(V)=c(TV)\cap [V]=\cfu(V)=\cfu(X^{(-1)})
\]
as it should. 

Using Proposition~\ref{prop:ressc}, it is not hard to verify that if $X$ is a possibly 
non-reduced effective Cartier divisor in a nonsingular variety $M$, then
\[
\cfu(X^{(-1)}) = \cfu({X_\text{red}}^{(-1)})\saf,
\]
even if the support $X_\text{red}$ is singular. This is compatible with our
definition of the Chern-Schwartz-MacPherson class of a possibly non-reduced 
scheme $X$, which guarantees that it only depends on the support of $X$.
\qede\end{remark}

\begin{example}
The {\em polar degree\/}\index{polar!degree} 
of a hypersurface $X$ of $\Pbb^n$ defined by a homogeneous
polynomial $F$ is the degree of the gradient map $\Pbb^n \to \Pbb^n$,
\begin{equation}\label{eq:gradmap}
p\mapsto \left(\frac{\partial F}{\partial x_0}: \cdots : \frac{\partial F}{\partial x_n}\right)\saf.
\end{equation}
A hypersurface is 
`homaloidal'\index{hypersurface!homaloidal} 
if this map is birational, that is, if its polar degree is~$1$.
Igor Dolgachev (\cite[p.~199]{MR1786486}) 
conjectured\index{Dolgachev's conjecture}
that a hypersurface $X$ 
is homaloidal if and only if $X_\text{red}$ is homaloidal. 

Now, the graph of the map~\eqref{eq:gradmap} is isomorphic to the blow-up of the 
zero-scheme of the partials, that is, to $B\ell_{JX}\Pbb^n$. Therefore, it is straightforward
to express the polar degree in terms of the degrees of the components of the Segre
class of $JX$ in $\Pbb^n$, and therefore in terms of the degrees of the components of
the Chern-Schwartz-MacPherson class of $X$. The result of this computation is the
following (see~\cite[\S3.1]{MR3031565} for more details).

\begin{proposition}
Let $X\subseteq \Pbb^n$ be a hypersurface. Denote by $\deg c_i(X)$ the degree of
the dimension-$i$ component of $\csm(X)$. Then the polar 
degree of $X$ equals
\[
(-1)^n-\sum_{i=0}^n (-1)^{n-i} \deg c_i(X)\saf.
\]
\end{proposition}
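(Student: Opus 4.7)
The polar degree of $X$ is the degree of the gradient map $\Pbb^n \dashrightarrow \Pbb^n$ sending $p \mapsto (\partial F/\partial x_0 : \cdots : \partial F/\partial x_n)$, and this map is resolved by the blow-up $\pi\colon \widetilde\Pbb := B\ell_{JX}\Pbb^n \to \Pbb^n$ along the singularity subscheme, since the partials generate $\cI_{JX}$. Let $\phi\colon \widetilde\Pbb \to \Pbb^n$ be the lifted morphism, $E$ the exceptional divisor of $\pi$, and $h = c_1(\pi^*\cO(1))$. The partials define a section of $\pi^*\cO(d-1)$ vanishing exactly on $E$, so $\phi^*\cO(1)\cong \pi^*\cO(d-1)\otimes \cO(-E)$, and therefore
\[
\text{pdeg}(X) \;=\; \int_{\widetilde\Pbb}\bigl((d-1)h - E\bigr)^n.
\]

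The first part of the plan is to expand this by the binomial theorem, apply the projection formula, and evaluate the push-forwards $\pi_*(E^k)$ using~\eqref{eq:SegfromE}. Writing $\iota_* s(JX,\Pbb^n)=\sum_j \sigma_j[\Pbb^j]$, one has $\pi_*(E^k)=(-1)^{k-1}\sigma_{n-k}[\Pbb^{n-k}]$ for $k\ge 1$, and substituting with a reindexing yields the intermediate identity
\[
\text{pdeg}(X)\;=\;(d-1)^n - \sum_{j=0}^{n-1}\binom{n}{j}(d-1)^j\,\sigma_j.
\]

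The second part of the plan is to convert this expression, which is linear in the $\sigma_j$, into one linear in $\gamma_i := \deg c_i(X)$ via Theorem~\ref{thm:CSMpur2}. Pushing forward the formula for $\csm(X)$ to $\Pbb^n$ using $c(T\Pbb^n)=(1+H)^{n+1}$, and unpacking the operations $(\cdot)^\vee$ and $\otimes_{\Pbb^n}\cO(d)$ concretely on $\iota_* s(JX,\Pbb^n)$, produces an explicit, upper-triangular (hence invertible) linear system expressing each $\gamma_i$ as a polynomial combination of the $\sigma_j$ with coefficients polynomial in $d$. Inverting and substituting converts the intermediate formula above into the target expression $(-1)^n - \sum (-1)^{n-i}\gamma_i$.

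The principal obstacle will be executing the final substitution cleanly: after inversion, the $\sigma_j$ become $d$-polynomial combinations of the $\gamma_i$, and plugging into the intermediate formula produces an expression involving $(d-1)^j$, powers of $(1+dH)^{-1}$, and binomial coefficients that must simplify to yield coefficients $-(-1)^{n-i}$ on each $\gamma_i$ and a constant $(-1)^n$. This simplification is forced by the fact that $\text{pdeg}(X)$ and each $\gamma_i$ are invariants of $X$, so the only admissible outcome of the algebra is coefficients independent of any auxiliary quantity; verifying it reduces to a family of binomial identities arising from the interplay between the expansion of $((d-1)h-E)^n$ and the operator $\otimes_{\Pbb^n}\cO(d)$ of~\eqref{eq:defmytens}. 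As a sanity check: when $X$ is smooth all $\sigma_j$ vanish and $\text{pdeg}(X)=(d-1)^n$; expanding $\csm(X)=c(TX)\cap[X] = ((1+H)^{n+1}/(1+dH))\cdot dH \cap [\Pbb^n]$ confirms directly that $(-1)^n-\sum(-1)^{n-i}\gamma_i$ also equals $(d-1)^n$.
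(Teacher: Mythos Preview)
Your approach matches the paper's exactly: the paper also observes that the graph of the gradient map is $B\ell_{JX}\Pbb^n$, expresses the polar degree via the degrees of the components of $s(JX,\Pbb^n)$, and then converts to the CSM degrees using Theorem~\ref{thm:CSMpur2}, referring to \cite[\S3.1]{MR3031565} for the details of the computation you leave as a plan.

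One caution on a side remark: the heuristic that invariance ``forces'' the final coefficients to be independent of auxiliary quantities is not sound here, since $d=\deg X$ is itself an invariant of $X$, so nothing a~priori prevents $d$ from appearing in the answer. The fact that $d$ drops out is genuine content of the identity and must come from actually carrying out the algebra (your smooth-case sanity check is a correct partial confirmation, but not a substitute).
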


Since $\csm(X)=\csm(X_\text{red})$, it follows that the polar degree of $X$ equals
the polar degree of $X_\text{red}$, verifying Dolgachev's conjecture.
(To our knowledge, the first proof of the conjecture appeared 
in~\cite[Corollary~2]{MR2018927}, over $\Cbb$. The argument sketched above
holds over any algebraically closed field of characteristic~$0$.)
\qede\end{example}

\begin{example}\label{ex:hyparrCSM}
We return once more to a hyperplane 
arrangement\index{hyperplane arrangement} 
$\cA$ in $\Pbb^n$ and its
corresponding hypersurface $A$. We will sketch a proof of Theorem~\ref{thm:Segarr},
which relies on the computation of $\csm(A)$. We will assume that $A$ is reduced, 
but as we just observed, $\csm(A_\text{red})=\cfu(A_\text{red}^{(-1)})=\cfu(A^{(-1)})$, 
and it follows that the result holds without changes for non-reduced arrangements
(as stated in~\S\ref{ss:hyparr}).

The arrangement $\cA$ corresponds to a 
central arrangement $\widehat\cA$ in $\Abb^{n+1}$. 
We let $\chi_{\widehat\cA}(t)$ be
the characteristic\index{characteristic polynomial of an arrangement} 
polynomial of $\widehat\cA$; see 
e.g., \cite[Definition~2.5.2]{MR1217488}. (For arrangements corresponding
to graphs, this is essentially the same as the 
{\em chromatic\index{chromatic polynomial} 
polynomial\/}
of the graph.)
We define $\chi_\cA(t)$ to be the quotient $\chi_{\widehat\cA}(t)/(t-1)$; this is
also a polynomial in $\Zbb[t]$, of degree~$n$. 

Now consider the Chern-Schwartz-MacPherson class of the {\em complement\/} of $A$:
\[
\csm(\Pbb^n\smallsetminus A) = c_*(\one_{\Pbb^n} - \one_A)\in A_*(\Pbb^n)\saf.
\]
As an element of $A_*(\Pbb^n)$, this class may be written as an integer linear
combination of the classes $[\Pbb^i]$ for $i=0,\dots, n$.

\begin{theorem}[{\cite[Theorem~1.2]{MR3047491}}]\label{thm:ha1}
The class $\csm(\Pbb^n\smallsetminus A)$ equals
the class obtained by replacing $t^i$ with $[\Pbb^i]$ in $\chi_\cA(t+1)$.
\end{theorem}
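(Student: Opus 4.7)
The plan is to combine the functoriality of MacPherson's natural transformation $c_\ast$ with M\"obius inversion on the intersection lattice $L(\cA)$, which I order by reverse inclusion so that the bottom element is $\hat 0 = \Pbb^n$. First I would stratify $\Pbb^n$ by the locally closed strata $F^\circ = F \smallsetminus \bigcup_{F' \subsetneq F,\, F' \in L(\cA)} F'$. These strata partition $\Pbb^n$, and in particular $\Pbb^n \smallsetminus A = \hat 0^\circ$. The identity $\one_F = \sum_{G \geq F} \one_{G^\circ}$ on $L(\cA)$ then inverts to give
\[
\one_{\Pbb^n \smallsetminus A} = \sum_{G \in L(\cA)} \mu(\hat 0, G)\, \one_G\,,
\]
where $\mu$ is the M\"obius function of $L(\cA)$.

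Next I would apply $c_\ast$. Each flat $G$ is a linear subspace isomorphic to $\Pbb^{\dim G}$, hence nonsingular, so $c_\ast(\one_G) = c(T\Pbb^{\dim G})\cap[\Pbb^{\dim G}]$, whose pushforward to $\Pbb^n$ expands as $\sum_{j=0}^{\dim G} \binom{\dim G + 1}{j+1}[\Pbb^j]$. This yields
\[
\csm(\Pbb^n \smallsetminus A) = \sum_{G \in L(\cA)} \mu(\hat 0, G)\sum_{j=0}^{\dim G}\binom{\dim G + 1}{j+1}[\Pbb^j]\,.
\]

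To finish, I would match this with the class obtained from $\chi_\cA(t+1)$ under $t^j \mapsto [\Pbb^j]$. Via the coning map $G \mapsto \widehat G$, we get a lattice embedding $L(\cA) \hookrightarrow L(\widehat\cA)$ with $\dim_{\text{aff}}\widehat G = \dim G + 1$, and $L(\widehat\cA) \smallsetminus L(\cA)$ contains at most the origin $\{0\}$, which appears exactly when $\cA$ is essential. Using the vanishing $\sum_{\widehat G \in L(\widehat\cA)}\mu(\hat 0, \widehat G) = 0$ (valid for any nonempty arrangement), a direct computation rewrites
\[
\chi_{\widehat\cA}(t+1) = \sum_{G \in L(\cA)}\mu(\hat 0, G)\bigl[(t+1)^{\dim G + 1} - 1\bigr]\,,
\]
which is manifestly divisible by $t$. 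Since $\chi_\cA(t+1) = \chi_{\widehat\cA}(t+1)/t$ and $((t+1)^{d+1}-1)/t = \sum_j \binom{d+1}{j+1}t^j$, substituting $t^j \mapsto [\Pbb^j]$ in $\chi_\cA(t+1)$ reproduces exactly the expression for $\csm(\Pbb^n \smallsetminus A)$ obtained above.

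The main obstacle will be the bookkeeping in this last step: cleanly reconciling $L(\cA)$ with $L(\widehat\cA)$, and in particular handling the potential extra element $\{0\}$ that appears in the essential case. The vanishing of the total M\"obius sum is precisely what absorbs this discrepancy and produces the uniform formula displayed above, simultaneously explaining why $(t-1) \mid \chi_{\widehat\cA}(t)$ so that $\chi_\cA$ is genuinely a polynomial.
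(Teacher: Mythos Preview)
Your proposal is correct and follows precisely the approach the paper indicates: a combinatorial argument via M\"obius inversion on the intersection lattice, combined with the normalization $c_*(\one_G)=c(T\Pbb^{\dim G})\cap[\Pbb^{\dim G}]$ for each linear flat $G$. The paper itself gives no further details beyond naming this route (and mentioning deletion--contraction as an alternative), so your write-up is in fact more explicit than the paper's sketch; the bookkeeping you flag concerning $L(\cA)$ versus $L(\widehat\cA)$ and the possible extra element $\{0\}$ is handled correctly by the total M\"obius sum vanishing.
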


This may be proved by a combinatorial argument, using `M\"obius inversion'. 
Alternately, one may use the deletion-contraction property of the characteristic
polynomial and the fact that Chern-Schwartz-MacPherson classes satisfy an 
inclusion-exclusion\index{inclusion-exclusion property of CSM classes} 
property: if $W_1$ and $W_2$ are locally closed subsets 
of a variety $V$, then
\begin{equation}\label{eq:inclexcl}
\begin{aligned}
\csm(W_1\cap W_2) &= c_*(\one_{W_1\cap W_2}) 
= c_*(\one_{W_1}+\one_{W_2}-\one_{W_1\cup W_2}) \\
&=\csm(W_1)+\csm(W_2)-\csm(W_1\cup W_2)
\end{aligned}
\end{equation}
in $A_*(V)$. June Huh extracts an expression of the characteristic polynomial
from these considerations, see~\cite[Remark~26]{MR2904577}.

The information carried by the characteristic polynomial of an arrangement
is equivalent to the information in its 
{\em Poincar\'e\index{Poincar\'e polynomial of an arrangement} 
polynomial\/}
\[ 
\pi_\cA(t):=(-t)^n \cdot \chi_A(-t^{-1})\saf.
\]
As the reader can verify, Theorem~\ref{thm:ha1} is equivalent to the following 
formula:
\[
i_*\csm(A)=c(T\Pbb^n)\cap \left( 1 - \frac 1{1+H} \pi_\cA\left(\frac{-H}{1+H}\right)\right) \cap [\Pbb^n]
\]
where $H$ is the hyperplane section and $i:A\to \Pbb^n$ is the inclusion. Therefore
\[
i_* s_\text{SM}(A,\Pbb^n) = \left( 1 - \frac 1{1+H} \pi_\cA\left(\frac{-H}{1+H}\right)\right) \cap [\Pbb^n]
\]
Using Theorem~\ref{thm:CSMSe} and simple manipulations, it follows that
\[
\pi_\cA\left(\frac{-H}{1+H}\right)\cap [\Pbb^n] = \frac{1+H}{1+dH}\left(1 - 
\iota_* s(JA,\Pbb^n)^\vee \otimes_{\Pbb^n} \cO(dH)\right)\cap [\Pbb^n] 
\]
where $d$ is the number of hyperplanes in the arrangement. Letting
\[
\iota_* s(JA,\Pbb^n) = \sum_i s_i [\Pbb^i] = \sum_i s_i H^{n-i}\cap [\Pbb^n]
\]
we get an equality of power series in $h$ modulo $h^{n+1}$:
\[
\pi_\cA\left(\frac{-h}{1+h}\right) \equiv \frac{1+h}{1+dh}
\left(1 - \sum_{i=0}^n \frac{ s_i\cdot (-h)^{n-i}}{(1+dh)^{n-i}}\right)
\quad \mod h^{n+1}
\]
or equivalently
\begin{equation}\label{eq:interm2}
\pi_\cA(t) \equiv \frac 1{1-(d-1)t} \left(1-\sum_{i=0}^n s_i \cdot 
\left( \frac t{1-(d-1)t}\right)^{n-i}\right)\quad \mod t^{n+1}\saf.
\end{equation}
By a classical result of Peter Orlik and Louis Solomon 
(\cite[Theorem~5.93]{MR1217488}),
\[
\pi_\cA(t) = \sum_{i=0}^n \rk H^k(\Pbb^n\smallsetminus A,\Qbb) t^i\saf.
\]
Reading off the coefficients of $t^i$, $i=0,\dots,n$ in~\eqref{eq:interm2} 
yields Theorem~\ref{thm:Segarr}.
\qede\end{example}

\subsection{Chern-Schwartz-MacPherson classes, general case}\label{ss:CSMgen}
Formulas in the style of Theorem~\ref{thm:CSMpur2} are useful: they have been applied 
to concrete computations of Chern-Schwartz-MacPherson classes, and they are amenable to
implementation in systems such as Macaulay2 since Segre classes are (\S\ref{ss:Segpro}).
One may expect that there should be a straightforward generalization of 
Theorem~\ref{thm:CSMSe} to higher codimension subschemes $X$ of a nonsingular
variety, based on the Segre class of a subscheme defined by a suitable Fitting ideal, 
generalizing the singularity subscheme~$JX$. One could also expect a generalization
of the interpretation of the Chern-Schwartz-MacPherson class as the Chern-Fulton class
of a suitable virtual scheme, along the lines of Proposition~\ref{prop:csmcf}.
With the exception of results for certain types of complete intersections 
(\cite{MR3165594}, \cite{FullwoodWang}), we do not know of explicit results
along these lines.

However, a formula for the Chern-Schwartz-MacPherson class of an arbitrary
subscheme of a nonsingular variety in terms of the Segre class of a related scheme
{\em can\/} be given. This is the most direct extension of~Theorem~\ref{thm:CSMSe}
currently available, and it will be presented below (Theorem~\ref{thm:CSMemb}). 
Before discussing this result, we note that, for computational purposes, the case
of arbitrary subschemes can already be treated by organizing a potentially large
number of applications of the hypersurface case.

\begin{proposition}\label{prop:CSMincexc}
Let $X$ be a subscheme of a nonsingular variety $M$, and assume that $X$ is 
the intersection of $r$ hypersurfaces $X_1,\dots, X_r$. Then
\[
\csm(X)= \sum_{s=1}^r (-1)^{s-1} \sum_{i_1<\cdots<i_s} 
\csm(X_{i_1}\cup\cdots\cup X_{i_s})\saf.
\]
\end{proposition}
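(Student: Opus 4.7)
The plan is to reduce the formula to a purely combinatorial identity on indicator functions and then apply the linearity of MacPherson's natural transformation $c_*$. Since $\csm(Y)$ depends only on the support of $Y$ (cf.~Remark~\ref{rem:nonred}), I may freely replace the scheme-theoretic intersection $X = X_1 \cap \cdots \cap X_r$, as well as each scheme-theoretic union appearing on the right-hand side, by their set-theoretic counterparts without altering any of the CSM classes in the statement.

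The heart of the argument is the ``dual'' inclusion-exclusion identity
\[
\one_{X_1 \cap \cdots \cap X_r} \;=\; \sum_{s=1}^{r}(-1)^{s-1}\sum_{i_1<\cdots<i_s}\one_{X_{i_1}\cup\cdots\cup X_{i_s}}
\]
in the group $\Ff(M)$ of constructible functions. I would verify this pointwise: for $p \in M$, let $k$ be the number of $X_i$ that contain~$p$. The number of $s$-element subsets of $\{1,\dots,r\}$ meeting $\{i : p \in X_i\}$ is $\binom{r}{s} - \binom{r-k}{s}$, and the elementary binomial identity
\[
\sum_{s=1}^{r}(-1)^{s-1}\left(\binom{r}{s} - \binom{r-k}{s}\right) \;=\; \delta_{k,r}
\]
then yields the claim. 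A cleaner alternative is induction on $r$, combining $\one_{A\cap B} = \one_A + \one_B - \one_{A\cup B}$ with the distributivity $(A_1 \cap \cdots \cap A_{r-1}) \cup A_r = \bigcap_{j<r}(A_j \cup A_r)$.

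Applying $c_*$ to both sides of the identity, and using linearity together with the definition $\csm(Y) = c_*(\one_Y)$, yields the proposition at once. There is no real obstacle; the statement is essentially the image under MacPherson's transformation of an elementary Boolean identity. The interest of the formula is computational: each union $X_{i_1}\cup\cdots\cup X_{i_s}$ is itself a hypersurface (its ideal sheaf is the product of the ideal sheaves of the $X_{i_j}$), so every term on the right may be evaluated by the explicit formula of Theorem~\ref{thm:CSMpur2}, reducing the computation of $\csm(X)$ for an arbitrary subscheme cut out by $r$ equations to $2^{r}-1$ hypersurface computations.
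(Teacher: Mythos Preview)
Your proposal is correct and follows exactly the approach the paper takes: the paper's entire proof is the sentence ``This is clear from inclusion-exclusion, which holds for CSM classes since it holds for constructible functions, cf.~\eqref{eq:inclexcl}.'' You have simply made the underlying combinatorial identity on indicator functions explicit, which is a reasonable elaboration of the same idea.
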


This is clear from 
inclusion-exclusion,\index{inclusion-exclusion property of CSM classes} 
which holds for CSM
classes since it holds for constructible functions, cf.~\eqref{eq:inclexcl}.
Since the classes appearing in the right-hand side are all CSM classes of
hypersurfaces, they can be computed by applying Theorem~\ref{thm:CSMpur2}.
This approach yields an algorithm for computing Chern-Schwartz-MacPherson 
classes of subschemes of 
$\Pbb^n$ and more general varieties, based on the computation of Segre classes 
(cf.~\cite{MR1956868}, \cite{MR3484270}, \cite{MR3385954}, 
\cite{MR3608229}, \cite{MR3608162}). The current Macaulay2 distribution 
includes the package~{\tt CharacteristicClasses} \cite{CharacteristicClassesSource}, 
by Helmer and Christine Jost, which implements this observation.

\begin{example}
Let $X$ be the scheme defined by the ideal $(xz^2 - y^2w , xw^2 - yz^2 , x^2w - y^3 , 
z^4 -yw^3)$ in $\Pbb^3$. 
The following Macaulay2 commands compute the push-forward to $\Pbb^3$
of its Chern-Schwartz-MacPherson class.

{\begin{verbatim}
i1 : load("CharacteristicClasses.m2")

i2 : R=QQ[x,y,z,w]

i3 : I=ideal(x*z^2-y^2*w, x*w^2-y*z^2, x^2*w-y^3, z^4 -y*w^3)

i4 : CSM I

       3     2
o4 = 2h  + 6h
       1     1
\end{verbatim}
}
\noindent
(The package uses $h_1$ to denote the hyperplane class.)
This shows that the locus is a sextic curve with topological Euler characteristic equal 
to~$2$. (It is in fact an irreducible rational sextic with one singular point.)
As the ideal has four generators, the computation requires $15$ separate applications 
of Theorem~\ref{thm:CSMpur2}, including one for a degree-$13$ hypersurface.
\qede\end{example}

One intriguing aspect of this approach via inclusion-exclusion is that the same subscheme
may be represented as an intersection of hypersurfaces in many different ways; and
extra features such as embedded or multiple components do not affect the result, 
since the Chern-Schwartz-MacPherson class only depends on the support of the scheme.
Massive cancellations involving the Segre classes underlying such computations must 
be at work. To our knowledge, more direct proofs of such cancellations are not available.

One obvious drawback of Proposition~\ref{prop:CSMincexc} is the large number 
of computations needed to apply it: $2^r-1$ distinct Segre class computations
for the intersection of $r$ hypersurfaces. As we will see next,
the same input---for example, a set of generators for the homogeneous ideal of a
projective scheme $X\subseteq \Pbb^n$---may be used to obtain an expression
that is a more direct generalization of Theorem~\ref{thm:CSMpur2}, in the sense that 
it gives an expression for $\csm(X)$ in terms of a single Segre class of a related
scheme. The price to pay is an increase in dimension, and the fact that (at this time) 
the result only yields the push-forward of $\csm(X)$ to the Chow group $A_*(M)$
of the ambient nonsingular variety.

Let $X$ be a subscheme of a nonsingular variety $M$. We may assume that 
$X$~is the zero scheme of a section of a vector bundle $E$ on $M$; in fact, we may 
choose $E=\Spec(\Sym \cE)$, where $\cE$ is any locally free sheaf surjecting onto 
the ideal sheaf $\cI_{X,M}$ of $X$ in $M$. Note that we can assume that the rank of
$E$ is as high as we please: for example, we can replace $\cE$ with 
$\cE\oplus \cO_M^{\oplus a}$ for any $a\ge 0$. The surjection 
$\cE \twoheadrightarrow \cI_{X,M}$ induces a morphism $\phi:\cE|_X \to \Omega_M|_X$
whose cokernel is the sheaf of differentials~$\Omega_X$. We view this as a
morphism of vector bundles over~$X$, $\phi: E^\vee|_X \to T^\vee M|_X$.
The kernel of $\phi$ determines a subscheme $J_E(X)$ of the projectivization
$\Pbo(E^\vee|_X)\overset\pi\to X$.
\begin{definition}\label{def:JEX}
With notation as above, we will denote by $J_E(X)$ the subscheme of $\Pbo(E^\vee|_X)$
defined by the vanishing of the composition of the pull-back of $\phi$ with the 
tautological inclusion $\cO_{E^\vee}(-1)\to \pi^* E^\vee|_X$.
\qede\end{definition}

It may be helpful to describe $J_E(X)$ in analytic coordinates $(x_1,\dots, x_n)$
for $M$, over an open set $U$ where $\Omega_M$ and $E$ are trivial. 
If $X$ is defined by $f_0(\ux)=\cdots=f_r(\ux)=0$ (so $\rk E= r+1$), 
$\phi:\cE|_X \to \Omega_U|_X$ has matrix
\[
\begin{pmatrix}
\frac{\partial f_0}{\partial x_1} & \cdots & \frac{\partial f_r}{\partial x_1} \\
\vdots & \ddots & \vdots \\
\frac{\partial f_0}{\partial x_n} & \cdots & \frac{\partial f_r}{\partial x_n} \\
\end{pmatrix}
\]
and $J_E(X)$ is defined by the ideal
\[
(y_0 df_0+\cdots+y_r df_r) = \left(\sum_{i=0}^r y_i \frac{\partial f_i}{\partial x_j}\right)_{j=1,\dots,n}
\]
in $\Pbo(E^\vee|_{X\cap U}) = \Pbb^r \times (X\cap U)$.
In other words, $J_E(X)$ records linear relations between the differentials
of the generators of the ideal of $X$. These may be due to relations between
the generators themselves (note that nothing prevents us from choosing e.g., 
$f_0=f_1$), or to singularities of $X$.

\begin{example}
If $X$ is a hypersurface, defined by the vanishing of a section $s$ of $E=\cO(X)$,
then $J_E(X)$ is the subscheme of $X\cong \Pbb^0\times X$ defined by 
the vanishing of~$ds$. 
That is, $J_E(X)=JX$ in this case: in this sense, the definition of $J_E(X)$ 
generalizes the notion of `singularity subscheme' of a hypersurface.
\qede\end{example}

We view $J_E(X)$ as a subscheme of the nonsingular variety $\Pbo(E^\vee)$,
and denote by $\iota:J_E(X)\hookrightarrow \Pbo(E^\vee)$ the inclusion and
$\pi\colon \Pbo(E^\vee)\twoheadrightarrow M$ the projection. 
The claim is now that the Segre class of $J_E(X)$ in $\Pbo(E^\vee)$ 
determines the Segre term for the Chern-Schwartz-MacPherson class of $X$,
at least after push-forward to $M$.

\begin{theorem}[\cite{MR4010562}]\label{thm:CSMemb}
Let $i:X\hookrightarrow M$ be a closed embedding of a scheme $X$ in a 
nonsingular variety $M$, defined by a section of a vector bundle $E$ of 
rank $>\dim M$. Then with notation as above, $i_*\csm(X)$ equals 
\begin{equation}\label{eq:CSMemb}
c(TM)\cap 
\pi_* \left(\frac{c(\pi^*E^\vee\otimes \cO_{E^\vee}(1))}{c(\cO_{E^\vee}(1))}\cap
\left(s(J_E(X),\Pbo(E^\vee))^\vee \otimes_{\Pbo(E^\vee)} \cO_{E^\vee}(1)\right)
\right)\saf.
\end{equation}
\end{theorem}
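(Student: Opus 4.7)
The plan is to reduce Theorem~\ref{thm:CSMemb} to the hypersurface case treated in Theorem~\ref{thm:CSMpur2}, via an auxiliary hypersurface in the projective bundle $\pi\colon\Pbo(E^\vee)\to M$. The section $s\in H^0(M,E)$ defining $X$ corresponds by duality to $s^\vee\colon E^\vee\to\cO_M$; composing with the tautological inclusion $\cO_{E^\vee}(-1)\hookrightarrow\pi^*E^\vee$ yields a section of $\cO_{E^\vee}(1)$ on $\Pbo(E^\vee)$, whose zero scheme $\cX$ is a hypersurface with $\cO(\cX)\cong\cO_{E^\vee}(1)$. Setting $r=\rk E-1$, the fiber of $\pi|_\cX$ over $p\in M\smallsetminus X$ is the projective hyperplane $\{v:\langle s(p),v\rangle=0\}\cong\Pbb^{r-1}$, whereas over $p\in X$ the section vanishes identically in the fiber, so $\pi^{-1}(p)\cong\Pbb^r\subseteq\cX$. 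Computing Euler characteristics of fibers gives $\pi_*\one_\cX=r\,\one_M+\one_X$, and the naturality of MacPherson's transformation $c_*$ along the proper map $\pi$ yields
\[
i_*\csm(X)=\pi_*\csm(\cX)-r\,c(TM)\cap[M].
\]

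The next step is to compute $\csm(\cX)$ via Theorem~\ref{thm:CSMpur2} and push forward. In local coordinates where $E$ is trivialized with $s=(f_0,\dots,f_r)$, the equation of $\cX$ in the affine chart $y_0=1$ of the fiber is $g=f_0+\sum_{i\ge1}y_if_i$, whose Jacobian ideal in $\Pbo(E^\vee)$ equals $\bigl(f_0,\dots,f_r,\,\sum_iy_i\,\partial f_i/\partial x_j\bigr)$; by Definition~\ref{def:JEX} this is precisely the defining ideal of $J_E(X)$, so $J\cX=J_E(X)$ as subschemes of $\Pbo(E^\vee)$. Substituting into Theorem~\ref{thm:CSMpur2} with $\cO(\cX)=\cO_{E^\vee}(1)$, using the duality identity $(c(\cL)\cap A)^\vee=c(\cL^\vee)\cap A^\vee$ and the elementary identity~\eqref{eq:notpr1} applied to the line bundle $\cO_{E^\vee}(-1)$, and invoking the relative Euler sequence to write $c(T\Pbo(E^\vee))=\pi^*c(TM)\cdot c(\pi^*E^\vee\otimes\cO_{E^\vee}(1))$, the projection formula rearranges the Segre-class summand of $\csm(\cX)$ into precisely the expression~\eqref{eq:CSMemb}.

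What remains is to verify that the $[\cX]$-summand of $\csm(\cX)$ pushes forward to exactly $r\,c(TM)\cap[M]$, cancelling the correction above. With $\xi=c_1(\cO_{E^\vee}(1))$ and $F=\pi^*E^\vee\otimes\cO_{E^\vee}(1)$, the identity $[\cX]\otimes_{\Pbo(E^\vee)}\cO_{E^\vee}(1)=(\xi/(1+\xi))\cap[\Pbo(E^\vee)]$ reduces this to showing $\pi_*(c(F)\cdot\xi/(1+\xi)\cap[\Pbo(E^\vee)])=r\,[M]$. Splitting $\xi/(1+\xi)=1-1/(1+\xi)$, using $\pi_*(c(F)\cap[\Pbo(E^\vee)])=(r+1)[M]$ (fiber-integration of $c(T_\pi)$, since $c(F)=c(T_\pi)$ by the Euler sequence), and expanding $c(F)/(1+\xi)=(1+\xi)^r\prod_i(1+\alpha_i/(1+\xi))$ in Chern roots $\alpha_i$ of $\pi^*E^\vee$, the projective-bundle identity $\pi_*\xi^k=s_{k-r}(E^\vee)$ gives
\[
\pi_*\bigl(c(F)/(1+\xi)\cap[\Pbo(E^\vee)]\bigr)=[M]-\tfrac{c_{r+1}(E)}{c(E)}\cap[M].
\]
The second term vanishes thanks to the rank hypothesis $\rk E>\dim M$, since $c_{r+1}(E)\cap[M]$ has codimension exceeding $\dim M$; the total pushforward is therefore $r\,[M]$, completing the proof. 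This last cancellation is the main technical obstacle; everything else is formal.
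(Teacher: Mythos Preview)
Your proof is correct and follows essentially the same approach as the paper's: you construct the auxiliary hypersurface $\cX\subseteq\Pbo(E^\vee)$, identify $J\cX=J_E(X)$, apply Theorem~\ref{thm:CSMpur2}, and push forward using the naturality of $c_*$, arriving at the general identity that~\eqref{eq:CSMemb} computes $i_*\csm(X)-\frac{c(TM)}{c(E)}c_{\text{top}}(E)\cap[M]$ (the paper's~\eqref{eq:allr}), with the correction term vanishing under the rank hypothesis. Your explicit push-forward computation of the $[\cX]$-summand via Chern roots spells out what the paper summarizes as ``standard intersection-theoretic calculus.''
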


Despite its rather complicated shape,~\eqref{eq:CSMemb} is straightforward to 
implement in a system capable of computing Segre classes; for example, 
Macaulay2 enhanced with the package 
{\tt SegreClasses}~(\cite{SegreClassesSource}) for computations in products of
projective space. Concrete examples may be found in~\cite[\S1]{MR4010562}.

Theorem~\ref{thm:CSMemb} is proved by
realizing $J_E(X)$ as the singularity subscheme of a hypersurface in $\Pbo(E^\vee)$,
applying Theorem~\ref{ss:CSMhyp}, and computing the push-forward by using
standard intersection-theoretic calculus and the naturality of $c_*$. The result
is that if $X$ is given by a section of a vector bundle~$E$, then~\eqref{eq:CSMemb}
computes
\begin{equation}\label{eq:allr}
i_*\csm(X) - \frac{c(TM)}{c(E)} c_\text{top}(E)\cap [M]
\end{equation}
(\cite[Theorem~2.5]{MR4010562}).
If the rank of $E$ exceeds the dimension of $M$ (as required in 
Theorem~\ref{thm:CSMemb}), then the second term vanishes, and the theorem
follows. We will come back to the more general case in~\S\ref{ss:Mil}.
To our knowledge, the auxiliary hypersurface used in this argument was first 
introduced by Callejas-Bedregal, Morgado, and Seade in~\cite{miclalci}, in the case 
of local complete intersections. The construction was also considered independently
by Ohmoto (\cite{Ohm}) and Xia Liao (\cite{Liao}).

The class~\eqref{eq:CSMemb} may be interpreted unambiguously as a class in $A_*(X)$, and it
is likely that it simply equals $\csm(X)$, but the argument we just sketched only 
shows the equality in $A_*(M)$. 

The reader will certainly notice similarities between the statement of 
Theorem~\ref{thm:CSMemb} and the case of hypersurfaces treated
in~\S\ref{ss:CSMhyp}. The new statement does recover Theorem~\ref{thm:CSMpur2}
(after push-forward to $M$) in the hypersurface case, as we see in the example that
follows.

\begin{example}\label{ex:hypemb}
Let $X$ be the hypersurface defined by a section $s$ of a line bundle $\cL\cong \cO(X)$
on a nonsingular variety $M$.
We may view $X$ as the zero scheme of the section $(s,s,\dots, s)$ of 
$E=\cO(X)^{\oplus r+1}$, for any $r\ge 0$. Then
\[
\Pbo(E^\vee) = \Pbo(\cO(-X)^{\oplus r+1}) \cong \Pbb^r \times M\saf;
\]
via this identification, $\cO_{E^\vee}(1)\cong \cO_{\Pbb^r\times M}(1)\otimes \pi^*\cO(X)$.
Therefore
\[
\frac{c(\pi^*E^\vee\otimes \cO_{E^\vee}(1))}{c(\cO_{E^\vee}(1))}
=\frac{(1+h)^{r+1}}{1+h+\pi^*X}\saf,
\]
where $h$ is the hyperplane class in $\Pbb^r\times M$.
The scheme $J_E(X)$ is locally defined by the ideal
\[
((y_0+\cdots+y_r) ds, s)
\]
in $\Pbb^r\times M$, where $y_i$ are homogeneous coordinates in $\Pbb^r$.
Note that $(ds,s)$ is the ideal of the singularity subscheme $JX$.
A generalization of the residual formula for Segre classes \eqref{eq:ressc} shows
that
\begin{multline*}
s(J_E(X),\Pbo(E^\vee))^\vee \otimes_{\Pbo(E^\vee)} \cO_{E^\vee}(1) \\
=\frac{h }{(1+h)(1+\pi^*X)}\cap \pi^*[X]
+\frac{1+h+\pi^* X}{(1+h)(1+\pi^*X)}\cap \pi^*\left(s(JX,M)^\vee\otimes_M \cO(X)\right)\saf.
\end{multline*}
Therefore, the term to push forward in~\eqref{eq:CSMemb} evaluates to
\[
\frac{(1+h)^r \cdot h}{(1+h+\pi^*X)(1+\pi^*X)}\cap \pi^*[X]
+\frac{(1+h)^r}{1+\pi^*X}\cap \pi^*\left(s(JX,M)^\vee\otimes_M \cO(X)\right)
\saf.
\]
The push-forward is carried out by the projection formula and reading off the
coefficient of $h^r$. The second summand pushes forward to 
\[
\frac 1{1+X}\cap \left(s(JX,M)^\vee\otimes_M \cO(X)\right)
=\left( c(\cO(X)\cap s(JX,M)\right)^\vee \otimes_M\cO(X)
\saf.
\]
The first summand pushes forward to
\[
\left(\text{coefficient of $h^r$ in } 
\frac{(1+h)^r \cdot h}{1+h+\pi^*X}\right)\cap \frac {[X]}{1+X}
\]
and elementary manipulations
 evaluate the coefficient, giving
\[
\left(1-\frac{X^r}{(1+X)^r}\right)\cap \frac {[X]}{1+X}\saf.
\]
In conclusion, \eqref{eq:CSMemb} equals
\[
c(TM)\cap \left(\frac {[X]}{1+X} +\left( c(\cO(X)\cap s(JX,M)\right)^\vee \otimes_M\cO(X)
-\frac{X^r}{(1+X)^{r+1}}\cap [X]\right)\saf.
\]
Theorem~\ref{thm:CSMemb} asserts that for $r+1>\dim M$, this expression
equals $i_*\csm(X)$. And indeed, if $r+1>\dim M$, the last term vanishes and 
we recover the expression in Theorem~\ref{thm:CSMpur2}.
\qede\end{example}

As with Proposition~\ref{prop:CSMincexc}, one intriguing feature of
Theorem~\ref{thm:CSMemb} is the vast degree of freedom in the choice of the
data needed to apply it---here, the vector bundle $E$ and the section of $E$ 
whose zero-scheme defines $X$. The fact that different choices of 
bundles or of defining sections lead to the same result reflects sophisticated identities 
involving the relevant Segre classes, for which we do not know a more direct proof. 

\subsection{Milnor classes}\label{ss:Mil}
\index{Milnor!class}
We have seen that Parusi\'nski's\index{Milnor!number!Parusi\'nski's} 
generalization of the Milnor number to complex hypersurfaces with
arbitrary singularities satisfies~\eqref{eq:parmil}:
\[
\mu(X)=(-1)^{\dim X} (\chi(X_\text{gen})-\chi(X))\saf,
\]
where $X_\text{gen}$ is a nonsingular hypersurface linearly equivalent 
to~$X$. Also, we have seen that $\chi(X_\text{gen})=\int\cvir(X)$
\index{Chern!class!virtual} 
(Proposition~\ref{prop:cvirgen}) and $\chi(X)=\int\csm(X)$ \eqref{eq:PH}. 
Therefore,
\[
\mu(X) = (-1)^{\dim X} \int \cvir(X)-\csm(X) \saf.
\]
This equality motivates the following definition, which makes sense over any algebraically
closed field of characteristic~$0$.

\begin{definition}\label{def:Milnorclass}
Let $X$ be a local complete intersection. The {\em Milnor class\/} of $X$ is
the class
\[
\cM(X):=(-1)^{\dim X} \left(\cvir(X)-\csm(X)\right)
\]
where $\cvir(X)$ is the class of the virtual tangent bundle of $X$.
\qede\end{definition}

(Recall that being a local complete intersection in a nonsingular variety is an
intrinsic notion, cf.~\cite[Remark~{II.8.22.2, p.185}]{MR0463157}, and that the virtual
tangent bundle of a local complete intersection is well-defined as a class in the
Grothendieck group of vector bundles on $X$.)

Definition~\ref{def:Milnorclass} would place the class in $A_*(X)$. The class
is clearly supported on the singular locus $X^\text{sing}$ of $X$, and in 
the case of a hypersurface $X$ we will produce below a well-defined class
in $A_*(JX)$ whose image in $A_*(X)$ is the class of 
Definition~\ref{def:Milnorclass}. Formulas explicitly localizing the class to
the singular locus are also given in the local complete intersection case
in~\cite{MR1873009} (over $\Cbb$, and in homology).

One could extend the definition of the Milnor class to more general schemes~$X$, 
as measuring the difference between $\csm(X)$ and $\cfu(X)$ or $\cfj(X)$
(cf.~\eqref{eq:fufjvir}). However,
recall that in general $\cfu(X)\ne \cfj(X)$ for schemes that are not local complete
intersections, so this would require a choice that seems arbitrary.
For this reason, we prefer to only consider the Milnor class for local complete 
intersections.

The geometry associated to Milnor classes of hypersurfaces and more generally
local complete intersections has been studied very thoroughly. We mention
\cite{MR2002g:14005}, \cite{MR1873009}, \cite{MR3053711}, \cite{miclalci}
among many others, as well as~\cite{MR1695362}, \cite{MR1720876}, where
(to our knowledge) the notion was first introduced and studied. 
The contribution~\cite{milnorhandbook} to this Handbook includes a thorough
survey of Milnor classes. Here we focus specifically on the relation between 
Milnor classes and Segre classes, and on consequences of this relation.\smallskip

First, we note that the Milnor class of a hypersurface $X$ of a
nonsingular variety~$M$ admits an expression in terms of a Segre 
operator~\eqref{eq:Sabsha}:
\begin{equation}\label{eq:MilSe}
\cM(X) = c(T_\text{vir} X)\cap \Segre_{T^\vee M}([\cE])_\vee\saf,
\end{equation}
where $[\cE]$ is the class of the exceptional divisor of the blow-up
$\pi\colon B\ell_{JX}M\to M$; as pointed out in~\S\ref{ss:CSMhyp},
$\cE$ may be viewed as a cycle in $\Pbo(T^\vee M)$,
so $\Segre_{T^\vee M}([\cE])$ is defined.
To verify~\eqref{eq:MilSe}, let $\cX=\pi^{-1}(X)$;
then $s(X,M) = \pi_* s(\cX, B\ell_{JX}M)$,
by the birational invariance of Segre classes,
and this implies the expression
\begin{align*}
\cvir(X) &= c(TM|_X)\cap \pi_*\left(\frac{[\cX]}{1+\cX}\right) \\
\intertext{for the virtual Chern class of $X$.
Also, note that $\cO_{T^\vee M}(1)|_{\cX}\cong \cO(\cX-\cE)|_{\cX}$
(this follows from~\eqref{eq:O1}); by Lemma~\ref{lem:charcy},
\eqref{eq:sabbah} implies}
\csm(X) &=c_*(\one_X)=c(TM|_X)\cap \pi_* \left(\frac{[\cX]-[\cE]}{1+\cX-\cE}\right)\saf.
\end{align*}
Therefore
\begin{align*}
(-1)^{\dim X}(\cvir(X)-\csm(X))
&=(-1)^{\dim X} c(TM|_X) \cap \pi_*\left(
\frac{[\cX]}{1+\cX} - \frac{[\cX]-[\cE]}{1+\cX-\cE}\right) \\
&=(-1)^{\dim X} c(TM|_X) \cap \pi_*\left(
\frac 1{1+\cX} \cdot \frac {[\cE]}{1+\cX-\cE} \right) \\
&=\frac{c(TM|_X)}{1+X} \cap \pi_*\left(
\frac {[\cE]}{1-\cX+\cE} \right)_\vee \\
&=\frac{c(TM|_X)}{1+X} \cap \pi_*\left(
c(\cO_{T^\vee M}(-1))^{-1}\cap [\cE] \right)_\vee \\
&=c(T_\text{vir} X)\cap \Segre_{T^\vee M}([\cE])_\vee
\end{align*}
as claimed.
By Theorem~\ref{thm:Sabthm}, identity~\eqref{eq:MilSe} may be written
\[
\cM(X) = c(\cO(X))^{-1}\cap c_*(\nu_{JX})
\]
for the constructible function $\nu_{JX}$ whose characteristic cycle is the exceptional
divisor~$\cE$. As a Lagrangian cycle, $[\cE]$ is a linear combination of cycles of
conormal spaces of subvarieties of $JX$:
$[\cE] = \sum_W n_W [N^\vee_WM]$;
then, as prescribed by Definition~\ref{def:charcyc}:
\[
\nu_{JX} = \sum_W (-1)^{\dim W} n_W \one_W\saf.
\]
Over $\Cbb$, and if $X$ is reduced, Parusi\'nski and Pragacz 
(\cite[Corollary~2.4]{MR2002g:14005}) prove that
\[
\nu_{JX}=(-1)^{\dim X}(\chi_X-\one_X)\saf,
\]
where for $p\in X$, $\chi_X(p)$ denotes the Euler characteristic of the Milnor fiber
of $X$ at~$p$. (In~\cite{MR2002g:14005}, $\nu_{JX}$ is denoted $\mu$.)

In general, note that $\cE$ is the projectivized normal cone of $JX$. 
If $Y$ is any subscheme of $M$, then we can associate to $Y$ a constructible
function $\nu_Y$ by letting $\nu_Y = \sum_W (-1)^{\dim W} n_W \one_W$, where
the subvarieties $W$ are the supports of the components of the normal cone
$C_YM$ and $n_W$ is the multiplicity of the component supported on $W$.
Then the class $c_*(\nu_Y)$ generalizes the class $c_*(\nu_{JX})=c(\cO(X))\cap \cM(X)$.
Kai Behrend (\cite[Proposition~4.16]{MR2600874}) proves that if $Y$ is endowed with
a {\em symmetric obstruction theory\/} (the singularity subscheme of a hypersurface gives
an example), then the $0$-dimensional component of
$c_*(\nu_Y)$ equals the corresponding 
`virtual fundamental class';\index{virtual!fundamental class} 
its degree is a Donaldson-Thomas type invariant. 

Expression~\eqref{eq:MilSe} for the Milnor class may be recast in terms of the Segre 
class~$s(JX,M)$.

\begin{proposition}\label{prop:Milhyp}
Let $X$ be a hypersurface in a nonsingular variety $M$. Then
\[
\cM(X)=(-1)^{\dim M} c(TM|_{JX})\cap \left((c(\cO(X))\cap s(JX,M))^\vee\otimes_M
\cO(X)\right)\saf.
\]
\end{proposition}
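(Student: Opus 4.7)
The plan is to reduce the statement to a direct subtraction of two explicit formulas already available in the excerpt, together with a bookkeeping of signs and a single application of the projection formula.

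First I would recall that $\cvir(X)=c(TM|_X)\cap s(X,M)$ for a hypersurface, and since $X$ is a Cartier divisor, $s(X,M)=[X]/(1+X)$ (Example~\ref{ex:Cardiv}); in the tensor notation of~\S\ref{ss:resanot} this is exactly $[X]\otimes_M\cO(X)$, because $[X]$ has pure codimension $1$ in $M$. Thus
\[
\cvir(X)=c(TM|_X)\cap\bigl([X]\otimes_M\cO(X)\bigr).
\]
On the other hand, Theorem~\ref{thm:CSMpur2} gives
\[
\csm(X)=c(TM|_X)\cap\Bigl(\bigl([X]+\iota_*(c(\cO(X))\cap s(JX,M))^\vee\bigr)\otimes_M\cO(X)\Bigr).
\]

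Next I would subtract. The $[X]$ contributions cancel, leaving
\[
\csm(X)-\cvir(X)=c(TM|_X)\cap\bigl(\iota_*(c(\cO(X))\cap s(JX,M))^\vee\otimes_M\cO(X)\bigr).
\]
Since the tensor operation $\otimes_M\cO(X)$ only rescales components according to their codimension in $M$, and pushforward along $\iota\colon JX\hookrightarrow X\hookrightarrow M$ preserves codimension in $M$, the operations $\iota_*$ and $\otimes_M\cO(X)$ commute. Combined with the projection formula $c(TM|_X)\cap\iota_*(\alpha)=\iota_*(c(TM|_{JX})\cap\alpha)$, this yields
\[
\csm(X)-\cvir(X)=c(TM|_{JX})\cap\bigl((c(\cO(X))\cap s(JX,M))^\vee\otimes_M\cO(X)\bigr),
\]
viewed as a class pushed forward from $A_*(JX)$.

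Finally, from Definition~\ref{def:Milnorclass}, $\cM(X)=(-1)^{\dim X}(\cvir(X)-\csm(X))=(-1)^{\dim X+1}(\csm(X)-\cvir(X))$, and $\dim X=\dim M-1$ gives $(-1)^{\dim X+1}=(-1)^{\dim M}$. Substituting produces exactly the stated formula. The only nonroutine point is the sign/commutativity check with the $\otimes_M$ operation and the pushforward $\iota_*$, which I expect to be the main (though minor) obstacle; it is handled by observing that $\otimes_M$ is defined through codimension in the fixed ambient $M$, so it is insensitive to where a class is supported, and hence commutes with pushforward into~$M$.
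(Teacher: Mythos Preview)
Your proof is correct and follows essentially the same approach as the paper: the paper's own argument simply notes that the statement is an immediate consequence of Theorem~\ref{thm:CSMpur2} after rewriting $\cvir(X)=c(TM|_X)\cap([X]\otimes_M\cO(X))$, and you have carried out this subtraction and the accompanying sign and projection-formula bookkeeping explicitly.
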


This is an immediate consequence of Theorem~\ref{thm:CSMpur2}. Indeed, 
\begin{align*}
\cvir(X) &=c(T_\text{vir} X)\cap [X] = c(TM|_X) c(N_XM)^{-1}\cap [X]
=c(TM|_X) c(\cO(X))^{-1}\cap [X] \\
&=c(TM|_X)\cap ([X]\otimes_M \cO(X))\saf.
\end{align*}

Note that we have written the right-hand side in Proposition~\ref{prop:Milhyp} as a class 
in $A_*(JX)$. The statement means that this class pushes forward to the difference defining 
the Milnor class in Definition~\ref{def:Milnorclass}. 
The formula also implies that every connected component of $JX$ has a well-defined 
contribution to the Milnor class of~$X$. Of course if a component is supported on an
isolated point $p$, and $\hat p$ denotes the part of $JX$ supported on $p$, then
the contribution of $p$ to the Milnor class is
\[
(-1)^{\dim M} c(TM|_{JX})\cap \left((c(\cO(X))\cap s(\hat p,M)^\vee)\otimes_M
\cO(X)\right)=s(\hat p,M)\saf,
\]
a class whose degree equals (in the complex setting) the ordinary Milnor number, 
cf.~\S\ref{ss:Milnor}.

Proposition~\ref{prop:Milhyp} may be formulated in terms of the 
`$\mu$-class'\index{$\mu$-class}
of~\cite{MR97b:14057}, already mentioned in~\S\ref{ss:Milnor}:
\[
\mu_{\cO(X)}(JX):=c(T^\vee M\otimes \cO(X))\cap s(JX,M)\saf.
\]
Indeed, simple manipulations using~\eqref{eq:notpr1} and~\eqref{eq:notpr2} show that
\begin{align*}
\cM(X)&=(-1)^{\dim M} c(\cO(X))^{\dim X} \left(\mu_{\cO(X)}(JX)^\vee \otimes_M \cO(X)\right)\saf,
\intertext{or, equivalently,}
\mu_{\cO(X)}(JX) &= (-1)^{\dim M} c(\cO(X))^{\dim X}\left(\cM(X)^\vee\otimes_M \cO(X)\right)\saf.
\end{align*}
It is somewhat remarkable that $\cM(X)$ and $\mu_{\cO(X)}(JX)$ are exchanged
by the `same' operation. Such involutions are not uncommon in the theory,
see~\cite{MR3232012}, \cite{MR3554655}. 

The $\mu$-class has applications to e.g., duality, and such
applications can be formulated in terms of the Milnor class. We give one explicit example.

\begin{example}
Let $M$ be a nonsingular projective variety, 
and let $H$ be a hyperplane tangent
to $M$, that is, a point of the 
{\em dual variety\/}\index{dual variety} 
$M^\vee$ of $M$; so $X=M\cap H$ 
is a singular hypersurface of $M$. Rewriting~\cite[Proposition~2.2]{MR97b:14057}
in terms of the Milnor class, we obtain that {\em the codimension of $M^\vee$ in the dual
projective space is the smallest 
integer $r\ge 1$ such that the component of dimension $r-1$ in the class
\[
(1+X)^{\dim M}\left(\cM(X)^\vee\otimes_M \cO(X)\right)
\]
does not vanish.\/} Further, the projective degree of this component (viewed as a class
in the dual projective space) equals the multiplicity of $M^\vee$ at $H$, up to
sign. (This result generalizes~\eqref{eq:muldis}.)
We do not know a `Segre class-free' proof of these facts.

For a concrete example, consider $M=\Pbb^2\times \Pbb^1$, embedded in $\Pbb^5$
by the Segre embedding. Using coordinates $(x_0:x_1:x_2)$ for the first factor, 
and $(y_0:y_1)$ for the second factor, let $X$ be the hypersurface with equation
$x_0 y_1=0$: Thus, $X$ is a hyperplane section via the Segre embedding, and $X$
is the transversal union of two surfaces isomorphic to $\Pbb^1\times \Pbb^1$, resp.~,
$\Pbb^2$, meeting along a $\Pbb^1$.
If $h_1$, resp., $h_2$ denote the pull-back of the hyperplane class from the first, resp.~second 
factor, then the reader can verify that
\begin{align*}
\cvir(X) &= \left((h_1 + h_2)+ (2h_1^2 + 3h_1h_2) + 4h_1^2h_2\right)\cap [\Pbb^2\times \Pbb^1]\saf, \\
\csm(X) &=\left((h_1 + h_1)+ (2h_1^2 + 4h_1h_2) + 5h_1^2h_2\right)\cap [\Pbb^2\times \Pbb^1]\saf.
\end{align*}
It is easy to obtain these expressions `by hand'; in any case, the following application
of~\cite{CharacteristicClassesSource} will confirm the second assertion.
{\begin{verbatim}
i1 : load("CharacteristicClasses.m2")

i2 : R=MultiProjCoordRing {2,1}

i3 : CSM ideal(R_0*R_4)

       2       2
o3 = 5h h  + 2h  + 4h h  + h  + h
       1 2     1     1 2    1    2
\end{verbatim}
}
Therefore
\begin{gather*}
\cM(X) = (-h_1h_2-h_1^2h_2)\cap [\Pbb^2\times \Pbb^1]\saf, \\
(1+X)^{\dim M}\left(\cM(X)^\vee\otimes \cO(X)\right) = -h_1h_2 
\cap [\Pbb^2\times \Pbb^1]\saf.
\end{gather*}
In fact, it is easy to verify (by hand!) that for the corresponding hypersurface in 
$M=\Pbb^n\times \Pbb^1$, we have
\begin{gather*}
\cM(X) = (-1)^{n+1} (1+h_1)^{n-1}h_1h_2\cap [\Pbb^n\times \Pbb^1]\saf, \\
(1+X)^{\dim M}\left(\cM(X)^\vee\otimes \cO(X)\right) = (-1)^{n+1} h_1h_2 
\cap [\Pbb^n\times \Pbb^1]\saf.
\end{gather*}
The conclusion is that $M^\vee$ has codimension~$n$ in the dual $\Pbb^{2n+1}$,
and is nonsingular at the point corresponding to this hyperplane section.
(In fact, it is well known that the Segre embedding of $\Pbb^n\times\Pbb^1$ 
in $\Pbb^{2n+1}$ is isomorphic to its dual variety for all~$n\ge 1$
\cite[Example~9.1]{MR2113135}.)
\qede\end{example}

It is natural to ask about extensions of Proposition~\ref{prop:Milhyp} to more general local complete
intersections. For us, $X\subseteq M$ is a 
{\em local complete intersection\/}\index{local complete intersection} 
if $X$ is the
zero-scheme of a regular section of a vector bundle $E$ defined on some neighborhood of $X$.
For notational convenience, we will restrict $M$ if necessary and assume that $E$~is
defined over the whole of $M$. Recall that the bundle $E$ and the section defining~$X$ 
determine a closed subscheme $J_E(X)$ of $\Pbo(E^\vee|_X)$ (Definition~\ref{def:JEX}).
We view~$J_E(X)$ as a subscheme of $\Pbo(E^\vee)$, and denote by $\pi\colon
\Pbo(E^\vee)\to M$ the projection.

\begin{theorem}\label{thm:Millci}
Let $i:X\hookrightarrow M$ be a local complete intersection in 
a nonsingular variety $M$, obtained as the zero-scheme of a regular section of a
vector bundle $E$ of rank $\codim_XM$. Then $(-1)^{\dim X+1} i_*\cM(X)$ equals
\begin{equation}\label{eq:Millci}
c(TM)\cap 
\pi_* \left(\frac{c(\pi^*E^\vee\otimes \cO_{E^\vee}(1))}{c(\cO_{E^\vee}(1))}\cap
\left(s(J_E(X),\Pbo(E^\vee))^\vee \otimes_{\Pbo(E^\vee)} \cO_{E^\vee}(1)\right)
\right)
\end{equation}
in $A_*(M)$.
\end{theorem}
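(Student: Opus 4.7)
The plan is to obtain Theorem~\ref{thm:Millci} as a direct corollary of the more general identity~\eqref{eq:allr} established in the course of proving Theorem~\ref{thm:CSMemb}, combined with the self-intersection formula for a regular section.

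The first step is to invoke~\eqref{eq:allr}: for $X$ realized as the zero-scheme of \emph{any} section of a vector bundle $E$ on $M$, with no restriction on $\rk E$, the expression displayed in~\eqref{eq:Millci} equals
\[
i_*\csm(X) - \frac{c(TM)}{c(E)}\cdot c_{\text{top}}(E)\cap [M]\saf.
\]
The hypothesis $\rk E>\dim M$ imposed in Theorem~\ref{thm:CSMemb} was used only to force the correction term to vanish for dimensional reasons. In the present setting, we no longer wish to kill this term: we want instead to identify it geometrically.

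The second step is to recognize the correction term as $i_*\cvir(X)$. Since $X$ is a local complete intersection of codimension $\rk E$ cut out by a regular section $\sigma$ of $E$, the embedding $i$ is regular with normal bundle $N_XM\cong E|_X$. Hence $T_{\text{vir}}X = TM|_X - E|_X$ in $K$-theory, so
\[
\cvir(X) = \frac{c(TM|_X)}{c(E|_X)}\cap [X]\saf.
\]
Applying $i_*$ and the projection formula, together with the self-intersection formula recalled in Example~\ref{ex:loctcc} (which gives $i_*[X] = c_{\text{top}}(E)\cap [M]$ for the zero-scheme of a regular section of $E$), one obtains
\[
i_*\cvir(X) = \frac{c(TM)}{c(E)}\cdot c_{\text{top}}(E)\cap [M]\saf.
\]

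Combining the two steps, the expression~\eqref{eq:Millci} equals
\[
i_*\csm(X) - i_*\cvir(X) = i_*\bigl(\csm(X) - \cvir(X)\bigr) = (-1)^{\dim X+1}\, i_*\cM(X)
\]
by Definition~\ref{def:Milnorclass}, which is the claim. The main technical content was absorbed into the earlier general identity~\eqref{eq:allr}; what remains here is essentially the recognition of the universal correction term $\tfrac{c(TM)}{c(E)}\, c_{\text{top}}(E)\cap [M]$ as $i_*\cvir(X)$ in the local complete intersection setting, and this identification is immediate from the self-intersection formula. Consequently no genuine obstacle intervenes at this final stage, provided one takes~\eqref{eq:allr} as given.
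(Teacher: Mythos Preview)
Your proof is correct and follows essentially the same approach as the paper: the paper explains (in the paragraph following the theorem) that both Theorem~\ref{thm:CSMemb} and Theorem~\ref{thm:Millci} are consequences of the general identity~\eqref{eq:allr}, the only difference being that the correction term $\frac{c(TM)}{c(E)}c_{\text{top}}(E)\cap [M]$ vanishes when $\rk E>\dim M$ and equals $i_*\cvir(X)$ when $X$ is a local complete intersection with $\rk E=\codim_X M$. Your write-up makes the identification of the correction term with $i_*\cvir(X)$ more explicit via the self-intersection formula, but this is exactly the content the paper is alluding to.
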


This statement may seem puzzling at first, since~\eqref{eq:CSMemb} and~\eqref{eq:Millci}
are {\em the same formula,\/} yet the first is stated to equal $i_*\csm(X)$ (for arbitrary $X$)
and the second equals $i_*\cM(X)$ (for local complete intersections). The difference
is in the ranks of the bundle $E$: in Theorem~\ref{thm:CSMemb} the rank is required to
exceed the dimension of the ambient variety $M$, while in Theorem~\ref{thm:Millci}
the rank is {\em equal to the codimension of~$X$.\/} Both statements are consequences of the
more general result~\eqref{eq:allr}: the formula evaluates the CSM class up to a correction
term, which is $0$ if $\rk E\gg 0$, and it is precisely $i_*(\cvir(X))$ if $X$ is a local 
complete intersection and $\rk E = \codim_XM$. 

\begin{example}
Let $X\subseteq M$ be a hypersurface defined by a section $s$ of $\cO(X)$.
In Example~\ref{ex:hypemb} we viewed $X$ as the zero scheme of the section 
$(s,\dots, s)$ of~$\cO(X)^{\oplus r+1}$, and showed that~\eqref{eq:CSMemb}
evaluates to
\[
c(TM)\cap \left(\frac {[X]}{1+X} + \left( c(\cO(X)\cap s(JX,M)\right)^\vee \otimes_M\cO(X)
-\frac{X^r}{(1+X)^{r+1}}\cap [X]\right)\saf.
\]
The case considered in Theorem~\ref{thm:Millci} corresponds to $r=0$, for which
the formula gives
\[
c(TM)\cap \left(\left( c(\cO(X)\cap s(JX,M)\right)^\vee \otimes_M\cO(X)\right)\saf,
\]
agreeing with $(-1)^{\dim X+1} i_*\cM(X)$ by Proposition~\ref{prop:Milhyp}.
In this sense, Theorem~\ref{thm:Millci} generalizes Proposition~\ref{prop:Milhyp}.
\qede\end{example}

Expression~\eqref{eq:Millci} shows that, as in the case of the `characteristic'
classes reviewed in this section, the Milnor class of a local complete intersection
is determined by a Segre class, $s(J_E(X),\Pbo(E^\vee))$ in this case. If $M=\Pbb^n$,
this class can be computed using e.g., the Macaulay2 package~\cite{SegreClassesSource};
the other ingredients in~\eqref{eq:Millci} are straightforward. For explicit formulas
and examples, see~\cite{MR4010562}.


\section{L\^e cycles}\label{sec:Le}

\subsection{St\"uckrad-Vogel intersection theory and van Gastel's result}\label{ss:SVvG}
\index{St\"uckrad-Vogel intersection theory}\index{intersection theory!St\"uckrad-Vogel}
An `excess intersection'\index{excess intersection} 
situation occurs when loci intersect in higher than expected
dimension. For example, $r$ hypersurfaces in a nonsingular variety $M$ are expected
to intersect in a codimension-$r$ subscheme; if they intersect along a subscheme of
higher dimension, `excess' intersection occurs. 

The ability to deal with excess intersection is one the successes of 
Fulton-MacPherson's intersection theory. If $X_1,\dots, X_r$ are hypersurfaces, and 
$Z$ is a connected component of $X_1\cap \cdots \cap X_r$, then 
the contribution of $Z$ to the intersection product of the classes of the hypersurfaces
may be written as
\begin{equation}\label{eq:FMag}
\left\{\prod_{i=1}^r (1+X_i)\cap s(Z,M)\right\}_{\dim M-r}\saf.
\end{equation}
For this, view $X_1\cdots X_r$ as $(X_1\times\cdots\times X_r)\cdot \Delta$,
where $\Delta$ is the diagonal in $M\times \cdots\times M$:
we have $(X_1\times\cdots\times X_r)\cap \Delta\cong X_1\cap\cdots \cap X_r$, 
$\Delta\cong M$, and we consider the fiber diagram
\[
\xymatrix{
X_1\cap\cdots\cap X_r \ar[r] \ar[d] & \Delta\cong M \ar[d] \\
X_1\times\cdots \times X_r \ar[r] & M\times\cdots \times M\saf.
}
\]
We can view $Z$ as a connected component of $(X_1\times\cdots\times X_r)
\cap \Delta$.
The restriction of the normal bundle $N_{X_1\times\cdots\times X_r}
(M\times\cdots\times M)$
to $Z$ is then isomorphic to $\oplus_i \cO(X_i)|_Z$, so that its Chern class
is (the restriction of) $\prod_{i=1}^r (1+X_i)$. Then~\eqref{eq:FMag} follows
from~\eqref{eq:FMcont}.
The fact that $Z$ may be of dimension higher than $\dim M-r$ is precisely accounted for
by the Segre class of $Z$ in $M$.

An alternative approach to intersection theory in projective space, dealing differently with 
excess intersection,
was developed by J\"{u}rgen St\"uckrad and Wolfgang Vogel (\cite{MR783085}, and see
\cite{MR1724388} for a comprehensive account). In excess intersection situations, this 
approach produces a {\em cycle\/} after a transcendental extension of the base field;
the intersection product can be computed from this cycle, and agrees with the
Fulton-MacPherson intersection product.

We review the construction of the St\"uckrad-Vogel `$v$-cycle', essentially following the 
`geometric' account 
given in~\cite{MR1079843}, where it is also extended to the setting of more general schemes.
However, we only present the construction in the somewhat limited scope needed for our
application, and we make a substantial simplification, at the price of only obtaining a
cycle depending on general choices. (The St\"uckrad-Vogel construction produces a
well-defined cycle independent of such choices, after a transcendental extension of the 
base field.) 

Let $V$ be a variety, $\cL$ a line bundle on $V$, $s_1,\dots, s_r$ nonzero sections of 
$\cL$, and $\cD$ the collection of the corresponding Cartier divisors $D_1,\dots, D_r$.
The sections $s_1,\dots, s_r$ span a subspace of $H^0(V,\cL)$; by a `$\cD$-divisor' 
we will mean a divisor defined by a section of this subspace. 
Let $Z=D_1\cap\cdots\cap D_r$.

The following inductive procedure constructs a cycle on $Z$, depending on general
choices of $\cD$-divisors. The procedure
only involves proper intersections with Cartier divisors, which is defined at the level
of cycles: if $W$ is a variety, and a Cartier divisor~$D$ intersects it properly, i.e., it
does not contain it, then $D\cap W$ is a Cartier divisor in $W$ (or empty), and we
denote by $D*W$ the corresponding cycle (or $0$). The class of this cycle is the
intersection product of $[W]$ by $D$ in the Chow group.
By linearity, this operation is extended to cycles
$\rho$ such that $D$ does not contain any component of $\rho$: then $D*\rho$
denotes the corresponding `proper intersection' product.

The algorithm may be described as follows.
\begin{itemize}
\item Let $\alpha^0=0$, $\rho^0=V$;
\item For $j>0$: if $\rho^{j-1}\ne 0$, then a general $\cD$ divisor $D'_j$ intersects
$\rho^{j-1}$ properly; let $D'_j * \rho^{j-1}=\alpha^j+\rho^j$, where $\alpha^j$ 
collects the components of the intersection product that are contained in 
$Z=D_1\cap\cdots\cap D_r$;
\item This procedure stops when $\rho^j=0$.
\end{itemize}

It is easy to see that a general $D'_j$ does intersect $\rho^{j-1}$ properly, so it is always 
possible to make the choice needed in the second point.
Also, let $s'_j$ be the section defining $D'_j$. The construction implies that if $\rho^{j-1}\ne 0$, 
then $s'_j$ is not in the span of $s'_1,\dots, s'_{j-1}$. In particular, the procedure must
stop at some $j\le r$. We set $\alpha^i=\rho^i=0$ for $j< i\le r$.

\begin{definition}\label{def:vcycle}
We denote by $\cD\capdot V$ the sum $\sum_{i=0}^r \alpha^{i}$. This is a cycle on
$Z=D_1\cap\cdots\cap D_r$.
\qede\end{definition}

\begin{remark}
We chose the notation~$\cD\capdot V$ to align with the notation used by van~Gastel
(in a more general context). This is the 
`$v$-cycle'\index{$v$-cycle} 
determined by $\cD$.
The definition presented above only depends on the linear system spanned by the
sections defining the divisors $D_i$ in the collection~$\cD$.
\qede\end{remark}

According to our definition, the cycle $\cD\capdot V$ depends on the choice of the 
divisors~$D'_j$. One of the advantages of the more sophisticated St\"uckrad-Vogel construction 
is that it yields a well-defined cycle independent of any choice, albeit after extending the ground 
field. However, we are only interested in the rational equivalence class of~$\cD\capdot V$,
and this is independent of the choices. In fact, the following holds.

\begin{theorem}\label{thm:vG}
With notation as above,
\[
[\cD\capdot V] = s(Z,V)\otimes_V \cL^\vee
\]
in $A_*(Z)$.
\end{theorem}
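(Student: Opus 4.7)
The plan is to lift both sides to the blow-up $\pi\colon \tilde V = B\ell_Z V \to V$, with exceptional divisor $E$. Set $\epsilon = c_1(\cO_{\tilde V}(E))$, $h = \pi^* c_1(\cL)$, and $\tilde h := h - \epsilon = c_1(\tilde\cL)$, where $\tilde\cL := \pi^*\cL \otimes \cO(-E)$. Because $\pi$ is birational and so preserves codimension in $V$, the operation $\otimes_V\cL^\vee$ commutes with $\pi_*$ on classes pulled back from $V$. Starting from~\eqref{eq:SegfromE}, which gives $s(Z,V) = \pi_*\bigl(\tfrac{[E]}{1+\epsilon}\bigr)$, a direct algebraic manipulation using $\pi^* c_1(\cL) = \tilde h + \epsilon$ and definition~\eqref{eq:defmytens} should produce the clean identity
$$
s(Z,V)\otimes_V\cL^\vee \;=\; \pi_*\Bigl(\tfrac{[E]}{1-\tilde h}\Bigr) \;=\; \sum_{k\ge 0}\pi_*\bigl(\tilde h^k\cap[E]\bigr).
$$
Since the $s_i$ generate the ideal of $Z$, we have $\pi^* s_i = e\cdot \tilde s_i$ on $\tilde V$ for a defining section $e$ of $E$ and sections $\tilde s_i$ of $\tilde\cL$ without common zero; thus $\tilde\cD$ is base-point free on $\tilde V$, and a general $\cD$-divisor $D'$ pulls back as $\pi^* D' = E + \tilde D'$ with $\tilde D'$ a general member of $\tilde\cD$.

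The heart of the argument is to match each $\alpha^j$ from the St\"uckrad--Vogel algorithm with $\pi_*(\tilde h^{j-1}\cap[E])$. For this, I would let $\tilde\rho^j$ denote the strict transform of $\rho^j$ in $\tilde V$; since $\rho^j$ has no components in $Z$ by construction, $\tilde\rho^j$ has no components in $E$. By induction on $j$, starting from $\tilde\rho^0 = \tilde V$, base-point-freeness of $\tilde\cD$ guarantees that a general $\tilde D'_j$ meets $\tilde\rho^{j-1}$ and $E$ properly; hence $\tilde D'_j\cdot\tilde\rho^{j-1}$ has no components in $E$, agrees with the strict transform of $\rho^j$ over $\tilde V\setminus E$, and therefore equals $\tilde\rho^j$ as a cycle. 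Iterating gives $\tilde\rho^j = \tilde D'_1\cdots\tilde D'_j$. Now the cycle identity
$$
\pi^* D'_j \cdot \tilde\rho^{j-1} \;=\; E\cdot\tilde\rho^{j-1} \,+\, \tilde D'_j\cdot\tilde\rho^{j-1}
$$
on $\tilde V$ pushes forward via the projection formula to $D'_j\cdot\rho^{j-1} = \pi_*(E\cdot\tilde\rho^{j-1}) + \rho^j$ in $A_*(V)$. Comparing with the algorithm's $D'_j\cdot\rho^{j-1} = \alpha^j + \rho^j$, and noting that $\alpha^j$ and $\pi_*(E\cdot\tilde\rho^{j-1})$ are both supported on $Z$, we obtain $[\alpha^j] = \pi_*(\tilde h^{j-1}\cap[E])$ in $A_*(Z)$.

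Summing over $j\ge 1$, a finite sum since the algorithm terminates (equivalently, $\tilde h^{j-1}\cap[E]=0$ for $j-1>\dim E$), then yields
$$
[\cD\capdot V] \;=\; \sum_j[\alpha^j] \;=\; \sum_{k\ge 0}\pi_*\bigl(\tilde h^k\cap[E]\bigr) \;=\; s(Z,V)\otimes_V\cL^\vee,
$$
completing the proof. The hard part will be the cycle-level bookkeeping in the induction: one must confirm that the generic choices used in the algorithm on $V$ translate to genericity in the base-point-free system $\tilde\cD$ on $\tilde V$, so that the strict-transform identification $\tilde\rho^j = \tilde D'_1\cdots\tilde D'_j$ survives at each step. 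The algebraic identity of the first paragraph, while routine, also requires some care to navigate the $\otimes$ notation.
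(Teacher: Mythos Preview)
Your proposal is correct and follows precisely the blow-up approach that the paper alludes to (but does not spell out): the paper cites van~Gastel's result and remarks that ``Theorem~\ref{thm:vG} can also be proved by interpreting $\cD\capdot V$ in terms of the blow-up of~$V$ along $Z$,'' which is exactly what you do. Your algebraic identity $s(Z,V)\otimes_V\cL^\vee = \pi_*\bigl([E]/(1-\tilde h)\bigr)$ and the inductive identification $[\alpha^j]=\pi_*(\tilde h^{j-1}\cap[E])$ are the right ingredients, and your caveats about genericity transferring to $\tilde\cD$ and about cycle-level bookkeeping correctly identify where care is needed.
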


In the context of St\"uckrad-Vogel intersection theory, this is~\cite[Corollary~3.6]{MR1079843}.
Theorem~\ref{thm:vG} can also be proved by interpreting $\cD\capdot V$ in terms of
the blow-up of~$V$ along $Z$; this naturally identifies its rational equivalence class as a 
`tensored Segre class'\index{Segre class!tensored} 
in the sense of~\cite{MR3599436}, up to a product by $c(\cL)$.

By~\eqref{eq:notpr2}, Theorem~\ref{thm:vG} is equivalent to
\begin{equation}\label{eq:SegSV}
s(Z,V)=[\cD\capdot V]\otimes_V \cL\saf.
\end{equation}
Using~\eqref{eq:FMag}, we see that
\[
D_1\cdots D_r\cap [V] = \left\{c(\cL)^r \cap \left([\cD\capdot V]\otimes_V \cL\right)\right\}_{\dim V-r}
\]
in $A_{\dim V-r} Z$. This is equivalent to the formula
\[
D_1\cdots D_r\cap [V] = \sum_{j=0}^r c_1(\cL)^{r-j}\cap \alpha^j\saf,
\]
cf.~\cite[Proposition~1.2 (c)]{MR1079843}. 

In conclusion, the St\"uckrad-Vogel construction offers an alternative to the treatment
of excess intersection of linearly equivalent divisors. By~\eqref{eq:SegSV}, the relevant
Segre class may be computed in terms of the $v$-cycle. Among other pleasant features,
this approach leads to `positivity' statements for Segre classes: by construction, the
$v$-cycle is effective; by~\eqref{eq:SegSV}, the non-effective parts of the Segre class
of the intersection of sections of a line bundle $\cL$ are due to the `tensor' operation
$\_ \otimes_V\cL$. (Cf.~\cite[Corollary~1.3]{MR3599436}.)

\subsection{L\^e cycles and numbers}
Broadly speaking, one can view singularities as arising because of an excess intersection.
For example, if $X$ is a hypersurface of $\Pbb^n$, with equation $F(x_0,\dots, x_n)=0$,
the singular locus of $X$ is the intersection of the $n+1$ hypersurfaces with equations
$\partial F/\partial x_i=0$, $i=0,\dots,n$. Then $X$ is singular precisely when these
hypersurfaces meet with excess intersection. The scheme they define is the singularity
subscheme~$JX$ of Definition~\ref{def:JX}; and the Segre class that is relevant to the 
Fulton-MacPherson approach is precisely, and not surprisingly, the class $s(JX,M)$ 
that appears in most results concerning hypersurfaces reviewed in~\S\ref{sec:numinvs} 
and~\ref{sec:Charcla}. Taking the point of view of~\S\ref{ss:SVvG}, we could express
these results in terms of the $v$-cycle corresponding to the linear system spanned by
the partials. 

A closely related construction was provided (independently from St\"uckrad and Vogel)
by Massey in~1986, 
leading to his definition of 
{\em L\^e cycles\/}\index{L\^e!cycles and classes} 
(\cite{MR1031905}, \cite{MR1094048},
\cite{MR1441075}). The theory and applications of L\^e cycles are surveyed 
in~\cite{masseyhandbook}. Massey's definition may be given for analytic functions
defined for a nonempty open subset of $\Cbb^{n+1}$. We are going to consider the
case of a homogeneous polynomial, and view it as the generator of the ideal of a
hypersurface in $\Pbb^n$. We will follow~\cite[\S7.7]{masseyhandbook} for the resulting
{\em projective\/} L\^e cycles. The considerations that follow would hold over any 
algebraically closed field of characteristic~$0$.

Let $F(x_0,\dots, x_n)$ be a homogeneous polynomial, defining a projective
hypersurface $X\subseteq \Pbb^n$. 
Massey's definition can be phrased in terms very close to the inductive definition
given in~\S\ref{ss:SVvG}, applied to the linear system spanned by the derivative
$\partial F/\partial x_i$ of $F$. We give the affine definition of the cycles first.
\begin{itemize}
\item Let $\Gamma^{n+1}=\Cbb^{n+1}$, $\Lambda^{n+1}=0$;
\item For $1\le k\le n+1$, define $\Gamma^{k-1}$ and $\Lambda^{k-1}$ by downward
induction by
\[
\Gamma^k * V\left(\frac{\partial F}{\partial x_{k-1}}\right) = \Lambda^{k-1} + 
\Gamma^{k-1}\saf,
\]
where the (cycle-theoretic) intersection is assumed to be proper, and $\Lambda^{k-1}$
consists of the components contained in $JX$, $\Gamma^{k-1}$ of the other components.
\end{itemize}
Following~\cite[\S7.7]{masseyhandbook}:

\begin{definition}
The {\em projective L\^e cycles\/} of $X$ are the cycles $\Llambda^k_X:=\Pbb(\Lambda^{k+1})$.
\qede\end{definition}

The projectivization of the cycles $\Gamma^j$ are the {\em projective relative 
polar\index{polar!cycles and classes} 
cycles\/} of $X$. 

The L\^e cycles of $X$ evidently depend on the chosen coordinates, and may not be
defined for certain choices as the cycles appearing in the definition may fail to meet properly.
Massey proves that a general choice of coordinates guarantees that the intersections are
proper, so that the corresponding L\^e cycles exist. In the following, the L\^e cycles we
consider will be assumed to be obtained from a general choice of coordinates.

Comparing Massey's definition with 
Definition~\ref{def:vcycle}, we recognize that the sum $\sum_{k=0}^n \Llambda^k$
of L\^e cycles may be viewed as an instance of the $v$-cycle $\cD\capdot \Pbb^n$,
where $\cD$~is the collection of partial derivatives of $F$. The dependence on the
choices (e.g., the choice of coordinates in Massey's definition, or the choice of $D'_j$
in Definition~\ref{def:vcycle}) is eliminated once one passes to rational equivalence,
so that
\[
[\cD\capdot \Pbb^n] = \sum_k [\Llambda^k_X]
\]
in $A_*(JX)$ if all choices are general. (Note however that the indexing conventions 
differ, so that with notation as in~\S\ref{ss:SVvG}, $[\Llambda^k_X] = [\alpha^{n-k}]$.)

With this understood, the next result follows immediately from Theorem~\ref{thm:vG}.

\begin{proposition}\label{prop:MasSeg}
Let $X$ be a degree-$d$ hypersurface in $\Pbb^n$, with projective L\^e 
cycles~$\Llambda^k_X$. Then
\begin{equation}\label{eq:MasSeg}
\sum_k [\Llambda^k_X] = s(JX,\Pbb^n)\otimes_{\Pbb^n} \cO(-(d-1))
\end{equation}
in $A_*(JX)$.
\end{proposition}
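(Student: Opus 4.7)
\smallskip

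\textbf{Proof proposal.} The plan is to recognize Massey's projective L\^e cycles as (the projectivization of) a particular instance of the $v$-cycle $\cD\capdot V$ from Definition~\ref{def:vcycle}, and then apply Theorem~\ref{thm:vG} directly.

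First, I would set $V=\Pbb^n$, $\cL=\cO(d-1)$, and take $\cD$ to be the collection of partial derivatives $\partial F/\partial x_0,\ldots, \partial F/\partial x_n$, each viewed as a section of $\cL$. By definition of the singularity subscheme (cf.~Definition~\ref{def:JX} and the explicit projective description following it), $JX$ is precisely the zero-scheme of these sections, so $Z=D_0\cap\cdots\cap D_n = JX$ with the scheme structure relevant to the theorem.

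Next, I would match Massey's inductive construction with the inductive procedure defining $\cD\capdot V$. Massey starts with $\Gamma^{n+1}=\Cbb^{n+1}$ and successively intersects with the divisor of $\partial F/\partial x_{k-1}$, splitting the proper intersection into a component $\Lambda^{k-1}$ supported on $JX$ and a residual cycle $\Gamma^{k-1}$; this is exactly the recursion in Definition~\ref{def:vcycle}, except performed on the affine cone rather than projectively, and with the specific $\cD$-divisors being the partials themselves rather than generic members of the linear system. Passing to projective cycles via $\Llambda^k_X=\Pbb(\Lambda^{k+1})$ and summing, one obtains a cycle on $JX\subseteq \Pbb^n$ that, up to relabeling the index ($[\Llambda^k_X]=[\alpha^{n-k}]$ in the earlier notation), agrees componentwise with a version of $\cD\capdot \Pbb^n$. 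Although the recipes pick particular divisors (in Massey's case, the partials in order, with a general-coordinates hypothesis guaranteeing properness), the remark after Definition~\ref{def:vcycle} records that the rational equivalence class $[\cD\capdot V]\in A_*(Z)$ does not depend on these choices; hence
\[
\sum_k [\Llambda^k_X] \;=\; [\cD\capdot \Pbb^n] \quad\text{in } A_*(JX).
\]

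Finally, I would invoke Theorem~\ref{thm:vG} with the above data to obtain
\[
[\cD\capdot \Pbb^n] \;=\; s(JX,\Pbb^n)\otimes_{\Pbb^n}\cL^\vee \;=\; s(JX,\Pbb^n)\otimes_{\Pbb^n}\cO(-(d-1)),
\]
which is the asserted identity~\eqref{eq:MasSeg}.

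The main obstacle is the second step: carefully justifying that Massey's recipe, which intersects with the specific (and ordered) partials under a general-coordinates assumption, yields the same rational equivalence class on $JX$ as the $v$-cycle built from general members of the linear system spanned by the partials. This requires checking that the ``general coordinates'' hypothesis in Massey's construction makes the successive intersections proper (so the cycles $\Lambda^{k-1},\Gamma^{k-1}$ are well-defined), and then appealing to the choice-independence of $[\cD\capdot V]$ in $A_*(Z)$. Everything else is a formal consequence of Theorem~\ref{thm:vG} and the identification $\cL=\cO(d-1)$.
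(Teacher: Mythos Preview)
Your proposal is correct and is essentially identical to the paper's own argument: the paper explicitly identifies $\sum_k \Llambda^k_X$ with the $v$-cycle $\cD\capdot\Pbb^n$ for $\cD$ the collection of partials, notes that the dependence on choices disappears in rational equivalence, and then states that the proposition ``follows immediately from Theorem~\ref{thm:vG}.'' Your discussion of the ``main obstacle'' (matching Massey's specific ordered partials with generic $\cD$-divisors at the level of $A_*(JX)$) is exactly the point the paper addresses in the paragraph preceding the proposition.
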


\begin{remark}\label{rem:Inv}
For $M=\Cbb^n$, Gaffney and Gassler (\cite{MR1703611}) propose
a generalization of classes of L\^e cycles based on more general ideals, which in
the case of the Jacobian ideal of a polynomial defining a hypersurface $X$ is
closely related with the Segre class of $JX$ (cf.~the definition of the {\em Segre
cycle\/} $\Lambda^{\bf g}_k(I,Y)$ in~\cite[(2.1)]{MR1703611}). Partly motivated
by this work, Callejas-Bedregal, Morgado, and Seade
gave a definition of L\^e cycles for a hypersurface $X$ of a compact complex 
manifold $M$, which amounts essentially to a cycle representing the Segre class 
$s(JX,M)$ 
(\cite[Definition~3.2]{MR3232012}). This definition is {\em not\/} compatible with 
Massey's L\^e cycles for $M=\Pbb^n$, as the authors opted to omit the extra tensor
appearing in~\eqref{eq:MasSeg}. Since the `hyperplane' defined in~\cite{MR1703611} 
differs from the tautological class used in \cite{MR3232012}, this causes a 
discrepancy amounting to a twist of the line bundle of the hypersurface.
This twist is accounted for in Proposition~\ref{prop:MasSeg}, which is 
compatible with the construction in~\cite{MR1703611}. 

See~\cite{MR3232013} and~\cite{MR3554655} for further 
discussions of~\cite[Definition~3.2]{MR3232012}. In particular, Callejas-Bedregal, 
Morgado, and Seade propose an alternative `geometric' definition in~\cite{MR3232013}
(Definition~1.3), which {\em does\/} agree with Massey's for $M=\Pbb^n$.
Also see~\cite[\S4]{milnorhandbook} (particularly Definition~4.4) for a comprehensive
account. We will come back to this definition in~\S\ref{ss:LMS}.
\qede\end{remark}

The fact that the L\^e cycles are {\em cycles\/} is important for geometric applications.
Proposition~\ref{prop:MasSeg} only computes their {\em classes\/} up to rational
equivalence, in the Chow group $A_*(JX)$ of the singularity subscheme of the
hypersurface. These classes still carry useful information, even after a push-forward
by the inclusion $\iota: JX\to \Pbb^n$. We consider the class
\[
\iota_*([\Llambda^k_X]) = \lambda^k_X [\Pbb^k]\saf,
\]
where the integers $\lambda^k_X$ are (still following Massey) called the 
{\em L\^e\index{L\^e!numbers}
numbers\/} of the hypersurface. (Massey's L\^e numbers also depend on the choice
of coordinates; again, we will assume that the choice of coordinates is sufficiently
general.) Proposition~\ref{prop:MasSeg} implies as an
immediate corollary a formula for the L\^e numbers in terms of the degrees of
the components of the Segre class (and conversely). 

\begin{corollary}\label{cor:LeSe}
Let $X\subseteq \Pbb^n$ be a hypersurface, and denote by $s_i$ the degree of 
the $i$-th dimensional component of the Segre class $s(JX,\Pbb^n)$.
Then for $k=0,\dots,n$:
\begin{align}
\lambda^k_X &= \sum_{j=k}^n \binom{n-k-1}{j-k} (d-1)^{j-k} s_j \label{eq:setola}\\
s_k &= \sum_{j=k}^n \binom{n-k-1}{j-k} (-(d-1))^{j-k} \lambda^j_X\saf.
\end{align}
\end{corollary}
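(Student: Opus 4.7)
The plan is to simply push forward the identity~\eqref{eq:MasSeg} of Proposition~\ref{prop:MasSeg} to $\Pbb^n$ and unpack the $\otimes_{\Pbb^n}$ operation in terms of the basis $\{[\Pbb^k]\}$ of $A_*(\Pbb^n)_{\Qbb}$. By definition of the L\^e numbers, the left-hand side pushes forward to $\sum_k \lambda^k_X [\Pbb^k]$, while the right-hand side pushes forward to $\iota_*(s(JX,\Pbb^n)) \otimes_{\Pbb^n} \cO(-(d-1)) = \bigl(\sum_j s_j [\Pbb^j]\bigr) \otimes_{\Pbb^n} \cO(-(d-1))$, since the tensor operation commutes with push-forward to the ambient space (both sides are computed codimension by codimension in $\Pbb^n$).

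Next I would apply the definition~\eqref{eq:defmytens}: the component $s_j [\Pbb^j]$ has codimension $n-j$ in $\Pbb^n$, so
\[
s_j [\Pbb^j] \otimes_{\Pbb^n} \cO(-(d-1)) = \frac{s_j[\Pbb^j]}{(1-(d-1)H)^{n-j}},
\]
where $H$ denotes the hyperplane class. Expanding the geometric series $(1-(d-1)H)^{-(n-j)} = \sum_\ell \binom{n-j+\ell-1}{\ell}(d-1)^\ell H^\ell$ and capping with $[\Pbb^j]$ turns $H^\ell \cap [\Pbb^j]$ into $[\Pbb^{j-\ell}]$; substituting $k=j-\ell$ and collecting the coefficient of $[\Pbb^k]$ yields
\[
\lambda^k_X = \sum_{j=k}^n \binom{n-k-1}{j-k}(d-1)^{j-k}\, s_j,
\]
which is~\eqref{eq:setola}.

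For the inverse formula, the cleanest route is to invoke~\eqref{eq:notpr2}: since $\otimes_{\Pbb^n}$ is a Pic-action, the operation $\otimes_{\Pbb^n}\cO(-(d-1))$ has inverse $\otimes_{\Pbb^n}\cO(d-1)$. Applying this inverse to both sides of the pushed-forward equation gives
\[
\sum_j s_j [\Pbb^j] = \Bigl(\sum_k \lambda^k_X [\Pbb^k]\Bigr) \otimes_{\Pbb^n} \cO(d-1),
\]
and the same binomial expansion (now with $c(\cO(d-1)) = 1+(d-1)H$, hence a factor of $(-(d-1))^{\ell}$ in the geometric series) produces the stated expression for $s_k$.

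There is no real obstacle; the only point that requires a moment's care is keeping the sign and index conventions straight when expanding $(1\pm(d-1)H)^{-(n-k)}$ and re-indexing $\ell \leftrightarrow j-k$, and checking that the tensor operation is compatible with $\iota_*$ in this setting (which holds because $\iota_*(s(JX,\Pbb^n))$ already records the codimensions of each component in $\Pbb^n$, so the two definitions of $\otimes_{\Pbb^n}$—before and after push-forward—agree).
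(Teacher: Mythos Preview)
Your proposal is correct and follows essentially the same approach as the paper: push forward~\eqref{eq:MasSeg} to $\Pbb^n$, expand the $\otimes_{\Pbb^n}$ operation using~\eqref{eq:defmytens}, and match coefficients of $[\Pbb^k]$; then invoke~\eqref{eq:notpr2} to invert and obtain the second formula. Your version is slightly more explicit about the binomial expansion and the compatibility of $\otimes_{\Pbb^n}$ with $\iota_*$, but the argument is the same.
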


\begin{proof}
Denote the hyperplane class by $H$. By Proposition~\ref{prop:MasSeg} and
the definition of~$\otimes_{\Pbb^n}$~\eqref{eq:defmytens}:
\begin{align*}
(\lambda_X^n + \lambda_X^{n-1} H + \cdots &+ \lambda_X^0 H^n)\cap [\Pbb^n] \\
&=((s_n + s_{n-1} H + \cdots + s_0 H^n)\cap [\Pbb^n])\otimes_{\Pbb^n}
\cO(-(d-1)) \\
&=\left(s_n + \frac{s_{n-1}H}{(1-(d-1)H)} + \cdots + \frac{s_0H^n}{(1-(d-1)H)^n}\right)
\cap [\Pbb^n]
\end{align*}
and the first formula follows by matching terms of equal degrees in the two
expressions. 
`Solving for $s(JX,\Pbb^n)$' in Proposition~\ref{prop:MasSeg} gives
\[
s(JX,\Pbb^n) = \sum_k [\Llambda^k_X] \otimes_{\Pbb^n} \cO(d-1)
\]
(apply~\eqref{eq:notpr2}), and the second formula follows by the same token.
\end{proof}

\begin{remark}
Formula~\eqref{eq:setola} in Corollary~\ref{cor:LeSe}:
\[
\lambda_X^k = s_k + (n-k-1)(d-1) s_{k+1}
+\binom{n-k-1}2 (d-1)^2 s_{k+2} + \cdots\saf.
\]
can be viewed as the degree
of the ordinary Segre class, `corrected' by a term
determined by the degree $d$ of the hypersurface.

In the introduction to~\cite{MR1703611}, Gaffney and Gassler state:
{\em ``\dots In fact, the Segre numbers {\rm (of the Jacobian ideal)} are just
the L\^e numbers of David Massey.''} 
Corollary~\ref{cor:LeSe} is compatible with this assertion: it is easy to
verify that the 
`Segre numbers'\index{Segre!numbers} 
of~\cite{MR1703611} agree with the right-hand 
side of~\eqref{eq:setola}. 
\qede\end{remark}

\begin{example}\label{ex:Mass}
Consider the hypersurface $X$ of $\Pbb^5$ defined by the polynomial
\[
F= x_0^7- x_1^7 - (x_2^3+x_3^3+x_4^3+x_5^3)\, x_0^4\saf.
\]
The singularity subscheme $JX$ is a non-reduced $3$-dimensional 
subscheme of $\Pbb^5$ supported on the linear subspace $x_0=x_1=0$.
We can use the package~\cite{SegreClassesSource} to compute its Segre 
class:
{\begin{verbatim}
i1 : load("SegreClasses.m2")

i2 : R=ZZ/32749[x0,x1,x2,x3,x4,x5]

i3 : X=ideal(x1^7- x0^7 - (x2^3+x3^3+x4^3+x5^3)*x0^4)

i4 : JX=ideal jacobian X

i5 : segre(JX,ideal(0_R))

            5       4       3      2
o5 = - 3168H  + 792H  - 144H  + 18H
            1       1       1      1
\end{verbatim}
}
\noindent
(Working over a finite field of large characteristic does not affect the result, and
often leads to faster computations.)
Thus,
\[
\iota_* s(JX,\Pbb^5) = 18 [\Pbb^3] -144 [\Pbb^2] +792 [\Pbb^1]-3168 [\Pbb^0]\saf,
\]
and Corollary~\ref{cor:LeSe} yields
\[
\begin{cases}
\lambda^4_X &= {\bf 0} \\
\lambda^3_X &= {\bf 18}\\
\lambda^2_X &= -144+2\cdot 6\cdot 18={\bf 72}\\
\lambda^1_X &= 792+3\cdot6\cdot (-144)+\binom 32 \cdot 36\cdot 18={\bf 144}\\
\lambda^0_X &= -3168+4\cdot 6\cdot 792+\binom 42 \cdot 36\cdot (-144)
+\binom 43 \cdot 216\cdot 18={\bf 288}\saf.
\end{cases}
\]
These L\^e numbers agree with those obtained by applying Massey's inductive 
definition with coordinates $(x_0,\dots, x_5)$; the L\^e cycles are complete intersections
in this case, and computing their degrees is straightforward. (Using $(x_5,\dots, x_0)$
leads to a different list; this latter choice is not sufficiently general.)
\qede\end{example}

We can also projectivize the cycles $\Gamma^k$ appearing in Massey's definition 
(corresponding to the $\rho$-cycles in the St\"uckrad-Vogel algorithm). Again (loosely)
following Massey, we call $\Ggamma^k_X:=\Pbb(\Gamma^{k+1})$ the `projective polar 
cycles' of $X$, and their degrees~$\gamma^k_X$ the 
`polar\index{polar!numbers} 
numbers' of $X$.
We assume these are computed for a general choice of coordinates.

At the level of rational equivalence classes, Massey's algorithm implies easily the relation
\[
\sum_k [\Llambda^k_X] = [\Pbb^n] - (1-(d-1) H) \sum_k [\Ggamma^k_X]
\]
from which $\lambda^n_X=0$ and 
\[
\lambda^k_X = (d-1) \gamma^{k+1}_X-\gamma^k_X 
\]
for $0\le k<n$. Equivalently, 
\[
\gamma^k_X = (d-1)^{n-k} - \sum_{j=k}^{n-1} (d-1)^{j-k} \lambda^j_X
\]
for $0\le k\le n$. (Also see~\cite[Corollary 7.7.3]{masseyhandbook}.)

\begin{corollary}\label{cor:polSe}
With notation as in Corollary~\ref{cor:LeSe}, and for $k=0,\dots,n$:
\begin{align*}
\gamma^k_X &= (d-1)^{n-k}-\sum_{j=k}^n \binom{n-k}{j-k} (d-1)^{j-k} s_j \\
s_k &= \delta_k^n-\sum_{j=k}^n \binom{n-k}{j-k} (-(d-1))^{j-k} \gamma^j_X
\end{align*}
where $\delta_k^n=1$ if $k=n$, $0$ otherwise.
\end{corollary}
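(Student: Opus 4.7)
My approach is to first derive the second formula directly, by expressing $s(JX,\Pbb^n)$ in terms of the polar cycles, then obtain the first by binomial inversion.

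For the main step, I would start from the Chow-class identity
\[
\sum_k[\Llambda^k_X] \;=\; [\Pbb^n] - (1-(d-1)H)\sum_k[\Ggamma^k_X]
\]
displayed immediately before the corollary, combined with Proposition~\ref{prop:MasSeg} in its solved form $s(JX,\Pbb^n) = \sum_k[\Llambda^k_X]\otimes_{\Pbb^n}\cO(d-1)$ (the same form used in the proof of Corollary~\ref{cor:LeSe}). Since $[\Pbb^n]$ has codimension~$0$ it is fixed by the tensor, and applying~\eqref{eq:notpr1} with $E=\cO(-(d-1))$ and $\cL=\cO(d-1)$ converts the factor $1-(d-1)H = c(\cO(-(d-1)))$ into $(1+(d-1)H)^{-1}$ after tensoring (since $E\otimes\cL$ is trivial). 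This gives
\[
s(JX,\Pbb^n) \;=\; [\Pbb^n] - \sum_{j}\frac{\gamma^j_X\,H^{n-j}}{(1+(d-1)H)^{n-j+1}}\cap[\Pbb^n]\saf,
\]
the summand arising by unpacking $\otimes_{\Pbb^n}\cO(d-1)$ via~\eqref{eq:defmytens} on the codimension-$(n-j)$ piece $\gamma^j_X H^{n-j}\cap[\Pbb^n]$. Reading off the degree of the codimension-$(n-k)$ component on both sides, and using the expansion $[H^{j-k}](1+(d-1)H)^{-(n-j+1)} = (-1)^{j-k}\binom{n-k}{j-k}(d-1)^{j-k}$ (from $\binom{-(n-j+1)}{j-k}=(-1)^{j-k}\binom{n-k}{j-k}$), yields exactly the second formula.

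The first formula then follows by binomial inversion. Rewriting the second as $\delta_k^n - s_k = \sum_{j\ge k}\binom{n-k}{j-k}(-(d-1))^{j-k}\gamma^j_X$ presents it as a unitriangular substitution, and the classical inversion pair
\[
a_k = \sum_{j\ge k}\binom{n-k}{j-k}x^{j-k}b_j \;\Longleftrightarrow\; b_k = \sum_{j\ge k}\binom{n-k}{j-k}(-x)^{j-k}a_j
\]
--- a consequence of $\sum_{j=k}^{l}(-1)^{j-k}\binom{n-k}{j-k}\binom{n-j}{l-j}=\delta_{kl}$, itself immediate from $\binom{n-k}{j-k}\binom{n-j}{l-j}=\binom{n-k}{l-k}\binom{l-k}{j-k}$ and the binomial theorem $(1-1)^{l-k}=\delta_{kl}$ --- applied with $x=-(d-1)$, $a_k=\delta_k^n-s_k$, $b_j=\gamma^j_X$ produces
\[
\gamma^k_X = \sum_{j\ge k}\binom{n-k}{j-k}(d-1)^{j-k}(\delta_j^n-s_j) = (d-1)^{n-k} - \sum_{j\ge k}\binom{n-k}{j-k}(d-1)^{j-k}s_j\saf,
\]
the first formula.

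The only real obstacle is careful bookkeeping for the $\otimes_{\Pbb^n}$ operation and the signed binomial coefficients in step one; both the Chow-class manipulation there and the combinatorial inversion in step two are otherwise routine.
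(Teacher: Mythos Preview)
Your argument is correct and is essentially the paper's own proof, run in the opposite order. The paper first writes down the identity
\[
\sum_k [\Ggamma^k_X] \;=\; (1-(d-1)H)^{-1}\cap \bigl([\Pbb^n]-s(JX,\Pbb^n)\otimes_{\Pbb^n} \cO(-(d-1))\bigr)
\]
(its~\eqref{eq:gafromse}), reads off the first formula, and then solves this identity for $s(JX,\Pbb^n)$---obtaining precisely your displayed expression for $s(JX,\Pbb^n)$---to get the second. You instead go straight to the solved form, extract the second formula, and recover the first by explicit binomial inversion rather than by re-reading a Chow-class identity. The Chow-group manipulation you perform (tensoring the $\Llambda$--$\Ggamma$ relation by $\cO((d-1)H)$ via~\eqref{eq:notpr1}) is exactly the step the paper takes to pass between~\eqref{eq:gafromse} and its solved form, and your binomial-coefficient bookkeeping is accurate. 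The only genuine difference is cosmetic: the paper inverts at the level of Chow identities, you invert at the level of coefficients; neither approach gains or loses anything over the other.
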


\begin{proof}
The first formula is obtained by reading off the coefficient of $[\Pbb^k]$ in the
identity
\begin{equation}\label{eq:gafromse}
\sum_k [\Ggamma^k_X] = (1-(d-1)H)^{-1}\cap 
\left([\Pbb^n]-s(JX,\Pbb^n)\otimes_{\Pbb^n} \cO(-(d-1))\right)\saf,
\end{equation}
which follows from the above discussion and Proposition~\ref{prop:MasSeg}.
Solving for $s(JX,\Pbb^n)$ in~\eqref{eq:gafromse} gives
\[
s(JX,\Pbb^n) = [\Pbb^n] - (1+(d-1)H)^{-1}\cap\sum_k 
([\Ggamma^k_X]\otimes_{\Pbb^n} \cO(d-1))
\]
(use~\eqref{eq:notpr1} and~\eqref{eq:notpr2}) with the stated implication on degrees.
\end{proof}

\begin{example}
For the hypersurface in Example~\ref{ex:Mass}, the computation of the polar numbers
runs as follows.
\[
\begin{cases}
\gamma^5_X &= {\bf 1} \\
\gamma^4_X &= {\bf 6} \\
\gamma^3_X &= 36-18={\bf 18}\\
\gamma^2_X &= 216-(-144)-\binom 31\cdot 6\cdot 18={\bf 36}\\
\gamma^1_X &= 1296-792-\binom 41\cdot6\cdot (-144)-\binom 42 \cdot 36\cdot 18={\bf 72}\\
\gamma^0_X &= 7776-(-3168)-\binom 51\cdot 6\cdot 792-\binom 52 \cdot 36\cdot (-144)
-\binom 53 \cdot 216\cdot 18={\bf 144}\saf.
\end{cases}
\]
Again, it is straightforward to verify that these agree with the result of Massey's algorithm,
applied with coordinates $(x_0,\dots, x_5)$.
\qede\end{example}

\begin{remark}
We already mentioned (Remark~\ref{rem:Pieneremark}) Piene's seminal 1978 
paper~\cite{Pienepolarclasses}, including formulas for polar classes of hypersurfaces 
in terms of Segre classes. The reader is warned that these two uses of the term `polar' 
differ: Piene's polar classes of a hypersurface $X$ are classes in $A_*(X)$, while Massey's
polar cycles are not supported on $X$. Therefore, the degrees of Piene's polar classes
are not the polar numbers $\gamma^k$ computed above.
However, we note that the formula in Corollary~\ref{cor:polSe} is very similar
to the formula in~\cite[Theorem~2.3]{Pienepolarclasses}; the main difference is in the
use of $s(JX,M)$ rather than $s(JX,X)$.
\qede\end{remark}

\subsection{L\^e, Milnor, Segre}\label{ss:LMS}
One moral to be drawn from the preceding considerations is that the information carried
by the L\^e classes
of a hypersurface $X$ of projective space, 
its Milnor class,
and the Segre class
of its singularity subscheme $JX$, is essentially the same. 
The relation between Segre classes and Milnor classes goes back to~\cite{MR96d:14004},
while the relation between Milnor classes and L\^e classes was first studied 
in~\cite{MR3232012, MR3232013}.
As far as hypersurfaces of 
projective space are concerned, many of the results covered in this review could be written in 
terms of any of these notions. Note however that 
extending L\^e cycles/classes to the setting of a hypersurface of a more general nonsingular
variety is nontrivial (this is one of the main goals of~\cite{MR3232012}; and see below); 
localizing Milnor 
classes to the components of the singular locus also requires nontrivial considerations
(see e.g.,~\cite{MR1873009}); while the Segre class of the singularity subscheme $JX$
is naturally defined as a class in the Chow group of $JX$, does not require a projective 
embedding, and may be considered over arbitrary fields. For these reasons, it would
seem that the language of Segre classes is preferable over these alternatives. 

For the convenience of the reader, we collect here the formulas translating these notions 
into one another. For notational economy we will let
\[
\Llambda:=\sum_k [\Llambda^k_X]\quad,\quad
\cM:=\cM(X)\quad,\quad
S:=s(JX,\Pbb^n)
\]
for a degree-$d$ hypersurface $X$ of $\Pbb^n$, and omit evident push-forwards. Then, 
denoting by $H$ the hyperplane class:
\begin{align}
&\label{eq:LS} \left\{\begin{aligned}
\Llambda &= S\otimes_{\Pbb^n} \cO(-(d-1)H) \\
S &= \Llambda\otimes_{\Pbb^n} \cO((d-1)H)
\end{aligned}\right. \\[5pt]
&\label{eq:MS} \left\{\begin{aligned}
\cM &= (-1)^n \dfrac{(1+H)^{n+1}}{1+dH}\cap \left(S^\vee \otimes_{\Pbb^n} \cO(dH)\right) \\
S &= (-1)^n \dfrac{(1+dH)^n}{(1+(d-1)H)^{n+1}}\cap \left(\cM^\vee \otimes_{\Pbb^n} \cO(dH)\right)
\end{aligned}\right. \\[5pt] 
&\label{eq:LM} \left\{\begin{aligned}
\Llambda &= (-1)^n (1+H)^n (1-(d-1)H)\cap  (\cM^\vee\otimes_{\Pbb^n} \cO(H)) \\
\cM &= (-1)^n\dfrac{(1+H)^{n+1}}{1+dH}\cap (\Llambda^\vee\otimes_{\Pbb^n} \cO(H))
\end{aligned}\right. 
\end{align}
Indeed, \eqref{eq:LS} follows from~Proposition~\ref{prop:MasSeg}; 
\eqref{eq:MS} from~Proposition~\ref{prop:Milhyp};
and \eqref{eq:LM} is then an immediate consequence, using~\eqref{eq:notpr1} 
and~\eqref{eq:notpr2}.

This dictionary suggests possible extensions of the notion of L\^e classes to hypersurfaces
of more general varieties. Let $M$ be a nonsingular compact complex variety endowed 
with a very ample line 
bundle $\cO(H)$. For a hypersurface $X$ of $M$, 
Callejas-Bedregal, Morgado, and Seade have constructed 
{\em global L\^e cycles,\/}\index{L\^e!cycles and classes!global}
determined by the choice of linear subspaces
of~$\Pbb^n$, generalizing the case $M=\Pbb^n$; 
see~\cite[Definition~1.3]{MR3232013} 
and~\cite[\S4.3]{milnorhandbook}. Denoting the corresponding 
class $\Llambda_\text{CBMS}(X)$, 
and letting $\cL=\cO(X)$, they prove the following result (which we state using our 
notation).

\begin{theorem}[{\cite[Theorem~4.6]{milnorhandbook}}]
\begin{align*}
\Llambda_\text{CBMS}(X) &= (-1)^{\dim M} c(\cO(H))^{\dim M} \, c(\cO(H)\otimes \cL^\vee)
\cap (\cM(X)^\vee\otimes_M \cO(H)) 
\\
\cM(X) &= (-1)^{\dim M} c(\cO(H))^{\dim M+1}\, c(\cL)^{-1}\cap 
(\Llambda_\text{CBMS}(X)^\vee\otimes_M \cO(H))\saf.
\end{align*}
\end{theorem}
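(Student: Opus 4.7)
The plan is to use the Segre class $s(JX,M)$ of the singularity subscheme as a common intermediary. Proposition~\ref{prop:Milhyp} already gives a clean expression for $\cM(X)$ in terms of $s(JX,M)$, so the remaining task is to obtain an equally clean expression for $\Llambda_\text{CBMS}(X)$ in terms of the same Segre class; the two displayed identities then follow from formal manipulations using the $\otimes_M$ operation and the rules~\eqref{eq:notpr1}--\eqref{eq:notpr2}.

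The key intermediate result I would aim to establish is the analogue of Proposition~\ref{prop:MasSeg}:
\[
\Llambda_\text{CBMS}(X) = s(JX,M)\otimes_M\bigl(\cO(H)\otimes \cL^\vee\bigr)\saf.
\]
In the projective setting this is exactly~\eqref{eq:LS}, since $\cO(-(d-1)H) = \cO(H)\otimes \cL^\vee$ when $\cL=\cO(dH)$. For a general $(M,\cO(H))$, the CBMS L\^e cycles are constructed inductively (see~\cite[Def.~1.3]{MR3232013} or~\cite[\S4.3]{milnorhandbook}) by successive proper intersections with generic members of a linear system inside $|T^\vee M\otimes \cL|$ which is twisted down by $\cO(H)$. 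This is precisely the St\"uckrad-Vogel setup of~\S\ref{ss:SVvG}, so Theorem~\ref{thm:vG}, applied to the line bundle $\cL\otimes\cO(-H)$ and to the subscheme cut out by the ideal of $JX$, yields the displayed formula directly.

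Assuming this lemma, both identities are a matter of algebraic manipulation. For the first identity, I would substitute the expression for $\cM(X)$ from Proposition~\ref{prop:Milhyp} into the right-hand side, apply the duality~\eqref{eq:upvee} (which replaces $c(TM)$ by $c(T^\vee M)$ and $c(\cL)$ by $c(\cL^\vee)$), then combine the two successive applications of $\otimes_M$ using~\eqref{eq:notpr2}:
\[
\bigl(\_\otimes_M \cL\bigr)\otimes_M \cO(H) = \_\otimes_M\bigl(\cL\otimes \cO(H)\bigr)\saf,
\]
and finally use~\eqref{eq:notpr1} to absorb the Chern class factors into the tensor-shifted Segre term. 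The coefficient $(-1)^{\dim M}c(\cO(H))^{\dim M}\,c(\cO(H)\otimes \cL^\vee)$ appears naturally as the result of the Chern-class bookkeeping. The second identity is then obtained by inverting the first: apply $\_^\vee$, tensor both sides with $\cO(-H)$ via \eqref{eq:notpr2}, and use the involutivity of $\_^\vee$.

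The hard part will be the Chern-class bookkeeping in the derivation of the first identity: tracking the $(-1)^{\dim M}$ signs, the precise power of $c(\cO(H))$, and the interaction of $\_^\vee$ with the twist $\otimes_M\cO(H)$ (which changes codimensions and hence the individual $^\vee$ signs of components). Each manipulation is a routine instance of the rules in~\S\ref{ss:resanot}, but the rank-dependent exponents in~\eqref{eq:notpr1} and the codimension-dependent signs in~\eqref{eq:upvee} must be matched carefully against the factor $c(\cO(H))^{\dim M}\cdot c(\cO(H)\otimes\cL^\vee)$ claimed in the theorem. Once this matching is verified, the second formula is automatic from the first.
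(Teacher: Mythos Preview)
The paper does not prove this theorem; it quotes it from~\cite[Theorem~4.6]{milnorhandbook} and then \emph{uses} it, together with Proposition~\ref{prop:Milhyp} and the rules~\eqref{eq:notpr1}--\eqref{eq:notpr2}, to derive the Segre-class expression
\[
\Llambda_\text{CBMS}(X) = c(\cO(H))\, c(T^\vee M\otimes\cO(H))\cap
\bigl( s(JX,M)\otimes_M (\cO(H)\otimes \cL^\vee)\bigr)\saf.
\]
So there is no ``paper's own proof'' to compare against; your proposal is an attempt at an independent argument.

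That attempt has a genuine gap: your key lemma
\[
\Llambda_\text{CBMS}(X) = s(JX,M)\otimes_M\bigl(\cO(H)\otimes \cL^\vee\bigr)
\]
is incorrect for general $M$. It differs from the formula the paper derives by the factor $c(\cO(H))\,c(T^\vee M\otimes\cO(H))$. For $M=\Pbb^n$ the Euler sequence gives $c(T^\vee\Pbb^n\otimes\cO(H))=c(\cO(H))^{-1}$, so this factor collapses to~$1$ and your lemma reduces correctly to~\eqref{eq:LS}; but on a general $M$ it does not. If you run your proposed manipulation (substitute Proposition~\ref{prop:Milhyp}, dualize, tensor by $\cO(H)$, apply~\eqref{eq:notpr1} with $E=T^\vee M$ of rank $\dim M$ and then with $E=\cL$) you will find exactly
\[
(-1)^{\dim M} c(\cO(H))^{\dim M}\, c(\cO(H)\otimes \cL^\vee)\cap (\cM(X)^\vee\otimes_M \cO(H))
= c(\cO(H))\,c(T^\vee M\otimes\cO(H))\cap\bigl(s(JX,M)\otimes_M(\cO(H)\otimes\cL^\vee)\bigr)\saf,
\]
so with your version of the lemma the right-hand side of the theorem does \emph{not} equal $\Llambda_\text{CBMS}(X)$.

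The source of the error is your description of the CBMS construction as a plain St\"uckrad--Vogel $v$-cycle on $M$ for the line bundle $\cL\otimes\cO(-H)$. The actual construction in~\cite{MR3232013} and~\cite[\S4.3]{milnorhandbook} uses generic linear subspaces of the ambient $\Pbb^N$ in which $M$ is embedded by $|H|$; the geometry of the embedding $M\hookrightarrow\Pbb^N$ (hence $T^\vee M\otimes\cO(H)$) enters in an essential way, and Theorem~\ref{thm:vG} applied na\"ively on $M$ does not capture it. Once you replace your lemma with the correct Segre-class expression above, the two displayed identities do follow by the formal manipulations you outline, and the second is indeed obtained from the first by the involution you describe.
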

That is, the natural generalization of~\eqref{eq:LM} holds for this class;
the class $\Llambda_\text{CBMS}$ agrees with the class
of Massey's L\^e cycle for $M=\Pbb^n$.

It is straightforward (using Proposition~\ref{prop:Milhyp} and~\eqref{eq:notpr1} 
and~\eqref{eq:notpr2}) to write $\Llambda_\text{CBMS}(X)$ in terms of a
Segre class:
\[
\Llambda_\text{CBMS}(X) = c(\cO(H))\, c(T^\vee M\otimes\cO(H))\cap
\left( s(JX,M)\otimes_M (\cO(H)\otimes \cL^\vee)\right)\saf.
\]
This expression reduces to~\eqref{eq:LS} for $M=\Pbb^n$, and it could
be used to extend the definition of $\Llambda_\text{CBMS}(X)$ to arbitrary fields 
and possibly noncomplete varieties.

There are other possible extensions of Massey's L\^e class to more general 
projective varieties; \eqref{eq:LS} suggests alternative generalizations.
Exploring such alternatives is the subject of current research.


\bibliographystyle{abbrv}
\bibliography{Scaiosvbib}

\end{document}